\numberwithin{equation}{section}
\newtheorem{theorem}{Theorem}
\newtheorem{thm}[theorem]{Theorem}
\newtheorem{cor}[theorem]{Corollary}
\newtheorem{lemma}[theorem]{Lemma}
\newtheorem{prop}[theorem]{Proposition}
\theoremstyle{definition}
\newtheorem{defi}[theorem]{Definition}
\newtheorem{claim}{Claim}
\newtheorem{problem}[theorem]{Problem}
\theoremstyle{remark}
\newtheorem{remark}[theorem]{Remark}
\newtheorem*{acknowledgments}{Acknowledgments}
\numberwithin{theorem}{section}
\newcommand {\A}{\mathbb A}
\newcommand {\M}{\mathbb M}
\newcommand{\eps}{\varepsilon}
\newcommand{\cV}{\mathcal{V}}
\newcommand{\cI}{\mathcal{I}}
\newcommand{\cE}{\mathcal{E}}
\newcommand{\cU}{\mathcal{U}}
\newcommand{\SB}{\mathrm{SB}}
\newcommand{\N}{\mathbb{N}}
\newcommand{\R}{\mathbb{R}}
\newcommand{\WF}{\mathrm{WF}}
\newcommand{\IF}{\mathrm{IF}}
\newcommand{\Tr}{\mathrm{Tr}}
\newcommand{\Lip}{\mathrm{Lip}}
\newcommand{\wlim}{\text{w-}\lim}
\newcommand{\wslim}{\text{w}^*\text{-}\lim}
\newcommand{\supp}{\mathrm{supp}}
\newcommand{\cof}{\mathrm{cof}}
\newcommand{\coKR}{\mathrm{co}\text{-}\mathrm{KR}}
\newcommand{\KR}{\mathrm{KR}}
\newcommand{\Id}{\mathrm{Id}}
\newcommand{\tn}{{\vert\kern-0.25ex\vert\kern-0.25ex\vert}}
\newcommand{\Tn}{{\big\vert\kern-0.25ex\big\vert\kern-0.25ex\big\vert}}    
\newcommand{\TN}{{\Big\vert\kern-0.25ex\Big\vert\kern-0.25ex\Big\vert}} 
\begin{document}

\title[Asymptotically uniformly smoothness and nonlinear geometry]{On asymptotically uniformly smoothness and nonlinear geometry of Banach spaces}
 
 \keywords{}
\author{B. M. Braga }
\address{Department of Mathematics and Statistics\\
 York University\\
4700 Keele St.\\ Toronto, Ontario, M3J IP3\\
Canada}\email{demendoncabraga@gmail.com}
\urladdr{https://sites.google.com/site/demendoncabraga}
\thanks{The author is supported by York Science Research Fellowship}
\date{\today}
\maketitle

\begin{abstract}
These notes concern the nonlinear geometry of Banach spaces, asymptotic uniform smoothness and several Banach-Saks-like properties. We study the existence of certain concentration inequalities in asymptotically uniformly smooth Banach spaces as well as weakly sequentially continuous coarse (Lipschitz) embeddings into those spaces. Some results concerning the descriptive set theoretical complexity of those properties are also obtained. We finish the paper with a list of open problem. 
\end{abstract}

\setcounter{tocdepth}{1}
\tableofcontents

\section{Introduction}\label{SectionIntro}

This paper mainly deals with  the existence of concentration inequalities of some specific maps into Banach spaces and asymptotic uniform smoothness. The motivation behind our approach  comes from the study of the nonlinear geometry of Banach spaces, more specifically, of the coarse and coarse Lipschitz geometry of Banach spaces and the search for properties which are stable under those kinds of embeddings (e.g., \cite{Kalton2007}, \cite{MendelNaor2008}, and \cite{BaudierLancienSchlumprecht2018}).

In  \cite{KaltonRandrianarivony2008}, N. Kalton and L. Randrianarivony proved an important concentration inequality for maps from the set of $k$-tuples of natural numbers into reflexive Banach spaces with asymptotically uniformly smooth (AUS) norms (we refer the reader to Section \ref{SectionPrelim} for any terminology not defined in  this introduction).  
Precisely, for each $k\in\N$, denote the set of all strictly increasing $k$-tuples $\bar n=(n_1,\ldots,n_k)$ of natural numbers by $[\N]^k$ and let $d_{\mathrm{H}}$ be the Hamming distance on $[\N]^k$, i.e., 
\[d_{\mathrm{H}}(\bar n,\bar m)=|\{j\in \{1,\ldots,k\}\mid n_j\neq m_j\}|,\]
for all $\bar n, \bar m\in [\N]^k$. By Theorem 4.2 of \cite{KaltonRandrianarivony2008},  if $X$ is a reflexive $p$-asymptotically uniformly smooth ($p$-AUS) Banach space, then there exists $C>0$ so that, for all $k\in\N$, all Lipschitz maps $f:([\N]^k,d_{\mathrm{H}})\to X$ and all $\eps>0$, there exists an infinite subset $\M\subset \N$ such that
\begin{equation}\label{EqKaltonRandri}
\text{diam}(f([\M]^k))\leq C\Lip(f)k^{1/p}+\eps.
\end{equation}

As noticed in \cite{LancienRaja2017}, if $X$ is not reflexive, James characterization of reflexivity (see \cite{James1964}, Theorem 1) gives a bounded sequence $(x_n)_{n}$ in $X$ so that 
$\|\sum_{i=1}^kx_{n_i}-\sum_{i=1}^kx_{m_i}\|\geq k$,
for all $n_1<\ldots< n_k<m_1<\ldots<m_k\in \N$. So, \eqref{EqKaltonRandri} does not hold for non-reflexive spaces. However, G. Lancien and M. Raja showed in \cite{LancienRaja2017} that for quasi-reflexive Banach spaces with an equivalent $p$-AUS norm, a concentration inequality still holds for interlaced tuples. Recall, a Banach space $X$ is \emph{quasi-reflexive} if $X$ has finite codimension in its bidual. It was shown in \cite{LancienRaja2017} that  if $X$ is a quasi-reflexive $p$-AUS Banach space, then there exists $C>0$ so that, for all $k\in\N$, all Lipschitz maps $f:([\N]^k,d_{\mathrm{H}})\to X$ and all $\eps>0$, there exists an infinite subset $\M\subset \N$ such that
\begin{equation}\label{EqLancienRaja}
\|f(\bar{n})-f(\bar{m})\|\leq C\Lip(f)k^{1/p}+\eps,
\end{equation}
for all interlaced $\bar n,\bar m\in [\M]^k$, i.e., for all $n_1<m_1<\ldots<n_k<m_k\in\M$ (see \cite{LancienRaja2017}, Theorem 2.2).

As a consequence, if a Banach space $X$ coarse Lispchitzly embeds into a quasi-reflexive $p$-AUS Banach space, then $X$ has the \emph{alternating $p$-Banach-Saks property}\footnote{Although this is not a standard terminology, this definition should be compared with the \emph{weak $p$-Banach-Saks property} (or simply  \emph{$p$-Banach-Saks property} in part of the literature). A Banach space $X$ has the alternating $p$-Banach-Saks property if and only if it does not contain $\ell_1$ and it has the weak $p$-Banach-Saks property (see Section \ref{SectionPrelim} for definitions). }, i.e., there exists $C>0$ such that for all sequences $(x_n)_n$ in the unit ball of $X$ and all $k\in\N$, there exists $\bar n\in [\N]^{k}$ such that $\|\sum_{j=1}^k(-1)^jx_{n_j}\|\leq Ck^{1/p}$. By investigating the relation between the duals of $X$ and its nonlinear embeddings into other spaces, we obtain a strengthening of this result. If $X$ is a Banach space, let $X^{(1)}=X^*$ and define by induction  $X^{(n+1)}=(X^{(n)})^*$ for $n\in\N$.

\begin{thm}\label{CorIteratedDualspBanSaks}
Let $p\in (1,\infty)$. Let $X$ be a Banach space and let $Y$ be a quasi-reflexive $p$-AUS Banach space. If $X$ coarse Lispchitzly embeds into $Y$, then $X^{(2\ell)}$ has the alternating $p$-Banach-Saks property for all $\ell\in\N$.  
\end{thm}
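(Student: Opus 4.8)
The plan is to characterise the alternating $p$-Banach-Saks property through spreading models and then to push that characterisation through the coarse Lipschitz embedding $\phi\colon X\to Y$, reaching $X^{(2\ell)}$ via the principle of local reflexivity.

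First, for any Banach space $Z$, $Z$ has the alternating $p$-Banach-Saks property with constant $C$ if and only if every spreading model $(\tilde f_i)_i$ generated by a sequence in $B_Z$ satisfies $\|\sum_{i=1}^m(-1)^i\tilde f_i\|\le Cm^{1/p}$ for all $m$. The only nontrivial implication is the forward one, and it uses that a spreading model is unchanged when one passes to a tail of the generating sequence: applying the definition of the property to longer and longer tails and letting the chosen indices run to infinity forces the alternating sums of the spreading model to obey the bound. So it suffices to prove that every spreading model of $X^{(2\ell)}$ has alternating sums of order $m^{1/p}$. Next recall that, by the principle of local reflexivity, $X^{**}$ is $(1+\eps)$-finitely representable in $X$ for every $\eps>0$; iterating, $X^{(2\ell)}$ is $(1+\eps)$-finitely representable in $X$ (this is exactly where evenness of the index is needed — $X^{*}$ need not be finitely representable in $X$).

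So fix a spreading model $(\tilde f_i)$ of a sequence $(x^{**}_n)_n$ in $B_{X^{(2\ell)}}$, fix $k$ and $\eps>0$, choose indices $\mu_1<\dots<\mu_M$ all so large that $\|\sum_{i=1}^{2k}(-1)^i x^{**}_{\mu_{r_i}}\|$ stays within $\eps$ of $\|\sum_{i=1}^{2k}(-1)^i\tilde f_i\|$ for every $r_1<\dots<r_{2k}$, and let $T$ be a $(1+\eps)$-isomorphism of $\mathrm{span}\{x^{**}_{\mu_1},\dots,x^{**}_{\mu_M}\}$ into $X$, with $w_r:=Tx^{**}_{\mu_r}\in(1+\eps)B_X$. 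For $t>0$ the map $g\colon[\{1,\dots,M\}]^k\to Y$, $g(\bar j):=\phi(t\sum_{i=1}^k w_{j_i})$, is Lipschitz with constant comparable to $t$. Applying a finitary version of \eqref{EqLancienRaja} for the quasi-reflexive $p$-AUS space $Y$ — for every $k,\eps$ there is $M_0$ so that any $1$-Lipschitz map of $[\{1,\dots,M\}]^k$ into $Y$ with $M\ge M_0$ admits an interlaced pair with images at distance at most $Ck^{1/p}+\eps$ — we obtain, once $M\ge M_0$, an interlaced pair $\bar j,\bar j'$ with $\|g(\bar j)-g(\bar j')\|$ of order $tk^{1/p}+\eps$. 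Relabelling the $2k$ interlaced indices as $\nu_1<\dots<\nu_{2k}$, the arguments of the two evaluations of $\phi$ differ exactly by $t\sum_{l=1}^{2k}(-1)^{l+1}w_{\nu_l}$, so the lower coarse-Lipschitz estimate of $\phi$ gives, after letting $t\to\infty$ and then the slack parameters to $0$, that $\|\sum_{l=1}^{2k}(-1)^{l+1}w_{\nu_l}\|_X$ is of order $k^{1/p}$. Pulling back by $T$ costs a factor $1+\eps$ and, since the $\mu_{\nu_l}$ are large, $\|\sum_{l=1}^{2k}(-1)^{l+1}\tilde f_l\|$ is of order $(2k)^{1/p}$; as $k$ was arbitrary, $(\tilde f_i)$ has alternating sums of order $m^{1/p}$, and the spreading-model characterisation completes the argument. (For $\ell=0$ one can skip local reflexivity and apply \eqref{EqLancienRaja} directly on $[\N]^k$; this recovers the assertion preceding the theorem.)

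I expect the main work to be the finitary version of \eqref{EqLancienRaja}: one must rerun the Lancien--Raja weak$^*$-stabilisation over a \emph{finite} Hamming cube, so that the required side length $M_0$ is finite but depends on $k$, $\eps$ and the asymptotic smoothness modulus of $Y$. It is this finiteness that forces one to work with the finitely many vectors $w_r$ furnished by local reflexivity; the two naive shortcuts both fail — lifting $\phi$ to the ultrapower $\phi_{\mathcal U}\colon X_{\mathcal U}\to Y_{\mathcal U}$ and using $X^{(2\ell)}\hookrightarrow X_{\mathcal U}$ is useless because $Y_{\mathcal U}$ need not satisfy \eqref{EqLancienRaja}, and lifting $\phi$ to a coarse Lipschitz embedding $X^{(2\ell)}\to Y^{(2\ell)}$ directly breaks down because weak$^*$ limits destroy the lower estimate.
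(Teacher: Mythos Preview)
Your approach has a genuine gap at the finitary Lancien--Raja step, and I do not think it can be filled the way you suggest. The Lancien--Raja argument is essentially infinitary: it proceeds by defining $g(\bar u)=\wslim_{u_k}f(\bar u,u_k)$ and inducting, so it needs an actual weak$^*$ limit along an infinite tail. On a finite cube $[\{1,\dots,M\}]^k$ there is nothing to take a limit of, so ``rerunning the weak$^*$-stabilisation over a finite Hamming cube'' is not a proof but a hope. The obvious indirect route --- assume the finitary version fails, take bad maps $f_M$, and extract a limiting map $f\colon[\N]^k\to Y^{**}$ via weak$^*$ compactness --- also breaks: weak$^*$ lower semicontinuity of the norm gives $\|f(\bar n)-f(\bar m)\|\le\liminf_M\|f_M(\bar n)-f_M(\bar m)\|$, which is the wrong inequality for contradicting the infinitary bound. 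You correctly rule out the ultrapower route, but the same obstruction (loss of quasi-reflexivity) is what makes the finitary statement genuinely nontrivial; it is not a known result and your proposal does not prove it.

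The paper avoids this entirely by replacing local reflexivity with Goldstine's theorem. Given $(z_n)_n$ in $B_{X^{(2\ell)}}$, one builds, for each $n$, a bounded \emph{net} in $X^{(2\ell-2)}$ weak$^*$-converging to $z_n$, iterates down to $X$, and ends up with a family in $B_X$ indexed not by $\N$ but by a product of directed sets $I_1\times\cdots\times I_\ell$ (the weak$^*$ neighbourhood bases of the successive biduals). The concentration inequality is then proved once and for all for Lipschitz maps $[I_1]^k\times\cdots\times[I_\ell]^k\to Y$ over \emph{arbitrary} directed sets with infinite tail (this is the property $\KR(p)$); the proof is the same weak$^*$-limit induction as Lancien--Raja, and it goes through because the index sets are infinite. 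Stability of $\KR(p)$ under coarse Lipschitz embeddings is immediate, and unwinding the Goldstine approximations turns the $\KR(p)$ bound on $X$ into the alternating $p$-Banach-Saks bound on $X^{(2\ell)}$. In short: local reflexivity is finitary and forces you into an unproven finitary concentration lemma; Goldstine is infinitary and lets the existing infinitary concentration lemma do all the work.
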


While concentration inequalities hold for maps from $[\N]^k$ into a quasi-reflexive $p$-AUS space, on the opposite side of  quasi-reflexivity, it is well known that every separable Banach space Lipschitzly embeds into $c_0$ and $c_0$ is $p$-AUS for all $p\in (1,\infty)$. In other words, $c_0$ is as asymptotically uniformly smooth  as one could possibly hope for, but still no kind of concentration inequality can hold for $c_0$. Therefore, a natural question rises: what happens between quasi-reflexivity and $c_0$? E.g.,  what can we say if (i) $X$ is complemented in $X^{**}$, (ii) $X^{**}/X$ is reflexive, or (iii) $X^{**}/X$ is AUSable? Recall, a Banach space is \emph{AUSable} (resp. $p$-\emph{AUSable}) if it has an equivalent AUS (resp. $p$-AUS) norm.

The main goal of Section \ref{SectionConcentrationInequality} and  Section \ref{SectionSeparableDuals}  is to systematically study  those so called $1/p$-concentration inequalities (i.e., inequalities as in \eqref{EqLancienRaja}) as well as  their implications. In the realm of non-reflexive Banach spaces, it is not possible to assure that the iterated duals of a Banach space are separable by simply restricting to its separable subspaces. This makes the analysis considerably more complicated since one has to work with maps from $[I_1]^k\times\ldots \times[I_\ell]^k$ into $X$, where each $I_j$ is a directed set (instead of simply $\N$). For that task,  we introduce two new properties for the class of metric spaces,  $\KR(p)$ and $\coKR(p)$. In a nutshell, $\KR(p)$ states that ``the space satisfies a $1/p$-concentration inequality'' and $\coKR(p)$ states that ``the space does not satisfy a $1/p'$-concentration inequality for any $p'>p$'', respectively (see Definition \ref{DefiKRp} and Definition \ref{DefiCoKRp}).

Vaguely speaking\footnote{We hope the reader forgive our vague statements here and we refer to Definition \ref{DefiKRp}, Definition \ref{DefiCoKRp}, Definition \ref{DefiModulusKappa} and the results quoted in (I), (II), (III) and (IV) for precise mathematical statements.}, the study of $\KR(p)$ and $\coKR(p)$ provides us with the following.

\begin{enumerate}[(I)]
\item \label{ItemHighDualsWithoutABSImpliesCoKR1} If $X^{(2\ell)}$ does not have the alternating Banach-Saks property for some $\ell\in\N$, then $X$ does not satisfy $1/p$-concentration inequalities for any $p\in (1,\infty)$ (see Corollary \ref{CorHighDualsWithoutABSImpliesCoKR1}).

\item \label{ItemXAUSCompBiDualReflAUS} Let $p\in (1,\infty)$. There exists a $p$-AUSable  Banach space $X$ which is complemented in its bidual,  $X^{**}/X$ is reflexive and AUSable, but   $X$ does not satisfy $1/p$-concentration inequalities  (see Corollary \ref{CorCorCor}\eqref{CorXAUSCompBiDualReflAUS}).

\item \label{ItemXAUSBiDualSuperAUS} Let $p\in (1,\infty)$. There exists a $p$-AUSable  Banach space $X$ so that $X^{**}/X$ is $q$-AUSable for all $q\in (1,\infty)$, but $X$  does not satisfy $1/q$-concentration inequalities for any $q\in (1,\infty)$ (see Corollary \ref{CorCorCor}\eqref{CorXAUSBiDualSuperAUS}).

\item \label{ItemSpaceWithDualsAUSComplemBidualBUTBad}Let $p\in (1,\infty)$ and $\ell\in\N$. There exists a Banach space $X$ which is complemented in its bidual, $X^{(2\ell)}$ is $p$-AUSable, but $X$ does not satisfy $1/p$-concentration inequalities (see  Corollary \ref{CorSpaceWithDualsAUSComplemBidualBUTBad}).
\end{enumerate}

Motivated by \cite{Braga2018IMRN},  in Section \ref{SectionWSCCoarseLipEmb}, we study coarse Lipschitz embeddings into $p$-AUS spaces  which are also weakly sequentially continuous. Recall, a map between two Banach spaces $f:X\to Y$ is \emph{weakly sequentially continuous} if for all  $(x_n)_n$ in $X$ which weakly converges to $x\in X$ it follows that $\wlim_nf(x_n)=f(x)$. The author  studied weakly sequentially   continiuous coarse embeddings into asymptotically uniformly convex spaces and showed  that the concept of  coarse embeddability by a map which is also weakly sequentially continuous is strictly weaker than isomorphic embeddability and strictly stronger than coarse embeddability (see \cite{Braga2018IMRN}, Corollary 1.7 and Theorem 1.8). In these notes, we prove the equivalent results for weakly sequentially continuous coarse Lipschitz embeddings.

Since $\ell_1$ coarse Lipschitzly embeds into a reflexive space (see \cite{AharoniLindenstrauss1985}, Theorem 1), by Theorem 1.4 of \cite{Braga2017JFA}, $\ell_1$ coarse Lipschitzly embeds into a reflexive space by a continuous map. As $\ell_1$ is a \emph{Schur space}, i.e., every weakly convergent sequence converges in norm, it follows that $\ell_1$ weakly sequentially continuously coarse Lipschitzly embeds into a reflexive space.  In particular, coarse Lipschitz  embeddability by weakly sequentially continuous maps is strictly weaker than isomorphic embeddability. However, since $\ell_1$ is a Schur space, this example is quite unsettling. Fortunately, the next result provide us with better examples and it gives a negative answer to Problem 5.4 of \cite{Braga2018IMRN}. Banach spaces $X$ and $Y$ are \emph{weakly sequentially homeomorphically Lipschitzly equivalent} if there exists a Lipschitz isomorphism $f:X\to Y$ such that $f$ and $f^{-1}$ are weakly sequentially continuous.

\begin{thm}\label{ThmWSCCoarseEmbWeakerIsomorphcEmb}
There exist weakly sequentially homeomorphically Lipschitzly equivalent  Banach spaces $X$ and $Y$ without the Schur property  such that $X$ does not linearly embed into $Y$. In particular,  the existence of a  weakly sequentially continuous coarse Lipschitz embedding between Banach spaces is  strictly weaker than the existence of an isomorphic embedding even for spaces without the Schur property.
\end{thm}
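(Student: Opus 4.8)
The plan is to find two Banach spaces that are Lipschitz isomorphic via a map whose inverse and itself are weakly sequentially continuous, yet fail to be linearly isomorphic and indeed fail to linearly embed one into the other, and which are not Schur. The natural strategy is to build on the classical examples separating Lipschitz and linear isomorphism of Banach spaces, the most useful being those of Aharoni--Lindenstrauss or, more to the point here, the spaces $\mathcal{C}(K)$-type or the $c_0$-sum constructions. However, since weak sequential continuity is a delicate constraint (it fails badly for nonlinear maps into spaces like $c_0$), I expect the right source of examples is the Kalton-type construction of a Banach space $X$ which is Lipschitz isomorphic to a hyperplane or a finite-codimensional subspace of itself but not linearly isomorphic to it, or the classical example showing $X \oplus \R$ can be Lipschitz but not linearly isomorphic to $X$. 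If one can show such an $X$ admits a Lipschitz self-map onto a hyperplane whose coordinates are weakly sequentially continuous, and arrange $X$ to not be Schur (e.g. by taking an $\ell_2$-direct sum with the pathological space so that weakly null sequences abound), one obtains the desired pair.

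More concretely, I would first isolate the known example (likely due to the failure of the Lipschitz-vs-linear classification) of spaces $Z$ and $W$ that are Lipschitz isomorphic but not linearly isomorphic; the archetype is $\mathcal{C}[0,\omega^{\omega}]$ versus a renorming, or the Gorelik principle examples. Then I would pass to $X = (\sum_n Z_n)_{\ell_2}$ and $Y = (\sum_n W_n)_{\ell_2}$, or something of this flavor, so that: (a) the $\ell_2$-sum kills the Schur property, since the canonical basis vectors of the outer $\ell_2$ form a weakly null sequence; (b) a Lipschitz isomorphism $Z_n \to W_n$ that is coordinatewise weakly sequentially continuous induces, coordinatewise, a weakly sequentially continuous Lipschitz isomorphism of the $\ell_2$-sums, using that weak convergence in an $\ell_2$-sum is equivalent to coordinatewise weak convergence together with boundedness; and (c) if $X$ linearly embedded into $Y$ one could, by a projection/localization argument, recover a linear embedding of $Z$ into $W$ (or into a finite sum of copies of $W$), contradicting the choice of the building blocks. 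Alternatively, and perhaps more cleanly, the paper may be using the Kalton example from his work on coarse and Lipschitz geometry where a space $X$ is Lipschitz but not linearly isomorphic to its hyperplanes; then set $Y$ to be a hyperplane of $X$.

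The technical heart is verifying weak sequential continuity of the Lipschitz isomorphism and of its inverse. For a map given by an explicit formula (as in the Aharoni--Lindenstrauss-type or James-tree-type constructions), this amounts to checking that each coordinate functional of the image depends weak-sequentially-continuously on the input, which typically reduces to the fact that the defining operations are either linear (automatically weakly sequentially continuous) or involve only finitely many coordinates / compact pieces (where norm and weak convergence of subsequences can be interchanged). I would phrase this as a lemma: if $f: X \to Y$ is a Lipschitz isomorphism and $(e_k^*)$ is a boundedly complete/shrinking FDD or a suitable total family of functionals on $Y$ such that $x \mapsto e_k^*(f(x))$ is weakly sequentially continuous for each $k$, and symmetrically for $f^{-1}$, then $f$ and $f^{-1}$ are weakly sequentially continuous; this uses boundedness (a Lipschitz isomorphism is bounded on bounded sets) plus a diagonal argument.

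The main obstacle I anticipate is precisely establishing weak sequential continuity of $f^{-1}$ rather than of $f$: the inverse of an explicit Lipschitz map is rarely given by a closed formula, so one cannot check coordinates directly. The likely fix is to choose the example so that $f$ is symmetric enough that $f^{-1}$ has the same structural form (e.g. $f$ is close to an involution, or the construction is self-dual), or to use a compactness/Gorelik-principle argument showing that if $(x_n)$ is weakly null in $X$ but $f^{-1}$ is not weakly sequentially continuous along it, one extracts a weakly null sequence in $Y$ on which $f$ fails to be weakly sequentially continuous, contradicting the already-established half. Once both halves are in hand, the non-embedding statement is the comparatively soft part, obtained from the known linear non-embeddability of the building blocks together with a standard lifting/projection argument, and the failure of the Schur property is immediate from the $\ell_2$-sum structure.
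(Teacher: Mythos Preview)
Your proposal is not a proof but a survey of possible strategies, none of which you commit to or verify. The paper does use the Aharoni--Lindenstrauss example, which you mention in passing before drifting toward $\ell_2$-sums and hyperplane constructions; but you do not develop the Aharoni--Lindenstrauss route, and your alternative suggestions have real problems. The $\ell_2$-sum idea requires separable building blocks $Z$, $W$ that are Lipschitz isomorphic yet not linearly isomorphic, and no such pair is known---whether separable Lipschitz isomorphic Banach spaces must be linearly isomorphic is a famous open problem. So that approach cannot get off the ground, and the example must be non-separable.

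What the paper actually does is take the explicit Aharoni--Lindenstrauss map $g:c_0(2^{\aleph_0})\to\ell_\infty$ (built from an almost disjoint family $(A_i)_{i\in I}$ of infinite subsets of $\N$) and prove directly that \emph{this specific map} is weakly sequentially continuous. This is the technical heart and it is nontrivial: one identifies $\ell_\infty^*$ with finitely additive signed measures, reduces to positive measures and positive vectors, derives the formula $\mu(g(x))=\sum_i x(i)\mu(A_i)$, and then runs a contradiction argument exploiting that the $A_i$ are almost disjoint and $\mu$ has finite total variation. Your coordinatewise-checking lemma does not apply here because $\ell_\infty$ has no such shrinking family of functionals; the argument is genuinely measure-theoretic. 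Your worry about the inverse is resolved by a trick you did not anticipate: there is a bounded \emph{linear} left inverse $q:\ell_\infty\to c_0(2^{\aleph_0})$ with $q\circ g=\mathrm{Id}$, so setting $X=c_0\oplus c_0(2^{\aleph_0})$, $Y=q^{-1}(c_0(2^{\aleph_0}))\subset\ell_\infty$, and $f(x,z)=x+g(z)$, the inverse $f^{-1}(y)=(y-g(q(y)),q(y))$ is weakly sequentially continuous because $q$ is linear and $g$ has already been shown weakly sequentially continuous. The non-embedding is then the classical fact that $c_0(2^{\aleph_0})$ does not linearly embed into $\ell_\infty$, and neither space is Schur since both contain $c_0$.
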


It has been recently proven that the class of reflexive asymptotic-$\ell_\infty$ Banach spaces is stable under coarse embeddings (see \cite{BaudierLancienMotakisSchlumprecht2018}, Theorem A). However, the following remains open.

\begin{problem}\label{ProblemCoarseLipRefleAUS}
If a Banach space $X$ coarse Lipschitzly embeds into a reflexive asymptotically uniformly smooth Banach space, does it follow that $X$ has  an asymptotically uniformly smooth renorming?
\end{problem}

Motivated by Problem \ref{ProblemCoarseLipRefleAUS}, in Section \ref{SectionWSCCoarseLipEmb}, we study several properties regarding weakly null trees in Banach spaces and obtain the following.

\begin{thm}\label{CorAUSablenessStableWSCCoarseLipEmb}
Let $p\in (1,\infty)$. Let $X$ and $Y$ be Banach spaces and assume that $X$ has  separable dual. Assume that $X$ coarse Lipschitzly embeds into $Y$ by a weakly sequentially continuous map. If $Y$ is $p$-AUSable, then $X$ is $p'$-AUSable, for all $p'\in (1,p)$. In particular, if $Y$ is AUSable, then $X$ is AUSable.
\end{thm}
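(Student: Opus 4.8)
\emph{Plan.} The plan is to reduce to a weakly-null-tree characterisation of $p'$-asymptotic uniform smoothability and then run a Kalton--Randrianarivony--type argument inside such a tree; the one genuinely new ingredient is that weak sequential continuity of the embedding transports weak nullity from $X$ to $Y$ and so takes over the role that reflexivity of the target plays in the classical proofs -- exactly the feature missing for plain coarse Lipschitz embeddings, cf.\ Problem~\ref{ProblemCoarseLipRefleAUS}. Since $X^{*}$ is separable, $X$ is separable, and by the renorming results for asymptotically uniformly smooth norms recalled in Section~\ref{SectionPrelim} it suffices to prove: for every $\delta>0$ there is $C>0$ such that every weakly null tree $(x_{\bar n})_{\bar n\in\NN}$ in the unit ball of $X$ admits an infinite branch $(x_{\bar n_{1}},x_{\bar n_{2}},\dots)$, where $\bar n_{j}$ extends $\bar n_{j-1}$ by one entry, along which
\[
\Big\|\sum_{i=1}^{m}a_{i}x_{\bar n_{i}}\Big\|\le C\Big(\sum_{i=1}^{m}|a_{i}|^{p}\Big)^{1/p}+\delta\,m^{1/p}
\]
holds for all $m\in\N$ and all scalars $a_{1},\dots,a_{m}$ with $\sum_{i}|a_{i}|^{p}\le 1$; such a $\delta$-approximate $\ell_{p}$-upper tree estimate, holding for all $\delta>0$, is equivalent to $p'$-AUSability for every $p'<p$. (For the last sentence of the theorem one runs the argument below with the general AUS modulus of $Y$ in place of its $p$-AUS modulus, obtaining $\mathrm{Sz}(X)\le\omega$ and hence, as $X^{*}$ is separable, that $X$ is AUSable.)

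Now fix the data. Renorm $Y$ to be $p$-AUS; the Kalton--Randrianarivony estimate for its modulus provides $C_{1}>0$ satisfying the \emph{asymptotic $\ell_{p}$-upper estimate} $\limsup_{n}\|z+z_{n}\|^{p}\le\|z\|^{p}+C_{1}\limsup_{n}\|z_{n}\|^{p}$ for all $z\in Y$ and all weakly null $(z_{n})$ in $Y$. Fix the weakly sequentially continuous coarse Lipschitz embedding $f\colon X\to Y$ and constants $A,B,a,d>0$ with $a\|u-v\|-d\le\|f(u)-f(v)\|\le A\|u-v\|+B$ for all $u,v\in X$; fix $\delta>0$ and a dilation parameter $t>0$, large in terms of $\delta$ and pinned down at the end. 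Given a weakly null tree $(x_{\bar n})_{\bar n\in\NN}$ in the unit ball of $X$, a branch, and scalars $(a_{i})$, set $V_{j}=t\sum_{i=1}^{j}a_{i}x_{\bar n_{i}}$ and $w_{j}=f(V_{j})-f(V_{j-1})$, so that $\|w_{j}\|\le At|a_{j}|+B$. The crucial observation is: with $\bar n_{1},\dots,\bar n_{j-1}$ (hence $V_{j-1}$) fixed, letting the last entry of $\bar n_{j}$ tend to infinity forces $x_{\bar n_{j}}\to 0$ weakly in $X$ by definition of a weakly null tree, hence $V_{j}=V_{j-1}+ta_{j}x_{\bar n_{j}}\to V_{j-1}$ weakly in $X$, hence $f(V_{j})\to f(V_{j-1})$ weakly in $Y$ by weak sequential continuity of $f$; thus $w_{j}$ runs through a weakly null sequence of $Y$ as the last entry of $\bar n_{j}$ grows.

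By a standard pruning of the tree -- at each level $j$ passing to an infinite set of admissible next entries for which the asymptotic $\ell_{p}$-upper estimate of $Y$, used with $z=\sum_{i<j}w_{i}$ and $z_{n}=w_{j}$, holds up to a small error and simultaneously for all members of a fixed finite net of coefficient vectors, and handling the countably many lengths by a diagonal argument -- and then selecting any branch of the resulting subtree, we obtain, on chaining the level-$j$ estimates and inserting $\|w_{j}\|\le At|a_{j}|+B$,
\[
\Big\|\sum_{j=1}^{m}w_{j}\Big\|^{p}\le C_{1}\sum_{j=1}^{m}(At|a_{j}|+B)^{p}+1\le C_{1}(2A)^{p}t^{p}\sum_{i=1}^{m}|a_{i}|^{p}+C_{1}(2B)^{p}m+1
\]
for every $m$ and every coefficient vector in the net. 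Since $\sum_{j=1}^{m}w_{j}=f(V_{m})-f(0)$ and $\|f(V_{m})-f(0)\|\ge at\|\sum_{i}a_{i}x_{\bar n_{i}}\|-d$, dividing by $t^{p}$, taking $p$-th roots, and using $\sum_{i}|a_{i}|^{p}\le 1$ gives the asserted estimate with $C$ of order $AC_{1}^{1/p}/a$ and additive error at most $\delta\,m^{1/p}$ once $t$ is large enough in terms of $\delta$; the estimate then passes from the net to all scalars because, for fixed $m$, both of its sides are Lipschitz functions of $(a_{1},\dots,a_{m})$ (the left side with constant at most $m^{1/p'}$, as $\|x_{\bar n_{i}}\|\le 1$). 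Since $\delta>0$ was arbitrary, the reduction in the first paragraph finishes the proof; note that it is precisely the need to commit to a single $t$ along an infinite branch that obstructs letting $\delta\to 0$, which is why only $p'$-AUSability for $p'<p$ comes out.

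The main obstacle is running this pruning with a map that is merely coarse Lipschitz and not Lipschitz: $f$ is uncontrolled at small scales, which is why one dilates by $t$ and must then keep the resulting $O(1)$-in-$t$ errors from overwhelming the $t^{p}$ gain, and why one must extract a single branch valid for all coefficient vectors rather than only for $0/1$ sums. Weak sequential continuity, on the other hand, renders the step ``$w_{j}$ weakly null in $Y$'' entirely routine and uses no reflexivity -- nor even separability -- of $Y$; the remainder is a quantitative elaboration of the Kalton--Randrianarivony and Lancien--Raja schemes.
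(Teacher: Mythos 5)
Your proposal is correct in its essentials and its engine is the same as the paper's: telescope $f$ along the partial sums of a weakly null tree in $X$, use weak sequential continuity to see that the resulting differences form a (semi‑normalized) weakly null tree in $Y$, apply the asymptotic $\ell_p$‑upper estimate of the $p$‑AUS norm on $Y$ (the inequality $\limsup_n\|z+z_n\|^p\le\|z\|^p+C_1\limsup_n\|z_n\|^p$, which is exactly \eqref{EqUppermp}), pull the estimate back through the lower coarse Lipschitz bound, and finish with the Odell--Schlumprecht renorming theorem, which is where the separable‑dual hypothesis enters. This is precisely the content of Theorem \ref{WCoarseLipEmbTree} combined with Proposition \ref{PropAUS} and Corollary \ref{treepBSimpliesAUS}.

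Where you genuinely diverge is in the treatment of general coefficients $(a_i)$. The paper only extracts the flat ($\pm1$‑coefficient) estimate $\|\sum_j x_{n_1,\dots,n_j}\|\le Ck^{1/p}$ in $X$ (the tree‑$p$‑Banach‑Saks property) and then upgrades to upper $\ell_{p-\eps}$‑tree estimates purely combinatorially via Lemma \ref{peps} (dyadic level sets of the coefficients plus H\"older), losing $\eps$ in the exponent at that stage. You instead carry arbitrary coefficients through the argument in $Y$ by pruning over a finite net of coefficient vectors and dilating by a large $t$ to beat the additive coarse‑Lipschitz errors; run for each fixed finite height $k$ with $\delta=\delta(k)$ small, this yields genuine upper $\ell_p$‑tree estimates for $X$ with a uniform constant, marginally stronger than the paper's intermediate conclusion. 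Two small corrections. First, your closing diagnosis is off: the passage to $p'<p$ is not forced by ``committing to a single $t$'' --- it is forced by the renorming step (upper $\ell_p$‑tree estimates plus separable dual give only $p'$‑AUS for $p'<p$), and both proofs pay this toll at the same place. Second, your formulation via a single infinite branch valid for \emph{all} lengths $m$ simultaneously is more than you need and more than your pruning delivers: at a given level the constraints coming from net vectors of all lengths $m\ge j$ are countably many cofinite conditions whose thresholds need not be bounded, so the ``diagonal argument'' as stated does not produce one infinite admissible set. Restricting to trees of a fixed finite height $k$ (which is all that the definition of upper $\ell_p$‑tree estimates and the Odell--Schlumprecht characterization require) makes the number of constraints per level finite and repairs this completely.
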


Since every separable Banach space Lipschitzly embeds into $c_0$ and since $c_0$ is $p$-AUS, for all $p\in(1,\infty)$, Theorem \ref{CorAUSablenessStableWSCCoarseLipEmb} gives us many spaces which
coarse Lipschitzly embed into $c_0$, but not by a weakly sequentially continuous map. In particular, we obtain the next corollary.

\begin{cor}
The existence of a  weakly sequentially continuous coarse Lipschitz embedding between Banach spaces is  strictly stronger  than the existence of a coarse Lipschitz embedding. \qed
\end{cor}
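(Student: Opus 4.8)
The plan is to derive this immediately from Theorem \ref{CorAUSablenessStableWSCCoarseLipEmb} together with the two classical facts quoted just before the corollary: every separable Banach space Lipschitzly embeds into $c_0$, and $c_0$ is $p$-AUS for every $p\in(1,\infty)$. First I would observe that one direction is trivial: a weakly sequentially continuous coarse Lipschitz embedding is in particular a coarse Lipschitz embedding, so the existence of the former implies the existence of the latter. It remains to exhibit a pair of Banach spaces $X,Y$ such that $X$ coarse Lipschitzly embeds into $Y$ but no such embedding can be chosen to be weakly sequentially continuous.

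For this I would take $Y=c_0$ and choose for $X$ any separable Banach space which has separable dual but is not AUSable; a concrete choice is $X$ a reflexive space that is not AUSable (such spaces exist — e.g.\ a suitable Tsirelson-type or $\ell_p$-sum construction, or one may invoke James' space arguments; any reflexive $X$ that admits no AUS renorming works, and such spaces are standard in the literature). Since $X$ is separable it Lipschitzly embeds into $c_0$, hence certainly coarse Lipschitzly embeds into $c_0$. Now suppose for contradiction that $X$ coarse Lipschitzly embeds into $c_0$ by a weakly sequentially continuous map. Since $X$ has separable dual and $c_0$ is $p$-AUSable for all $p\in(1,\infty)$, Theorem \ref{CorAUSablenessStableWSCCoarseLipEmb} (its final clause) forces $X$ to be AUSable, contradicting the choice of $X$. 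Hence no weakly sequentially continuous coarse Lipschitz embedding of $X$ into $c_0$ exists, while a coarse Lipschitz embedding does, which establishes the strictness.

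The only point requiring care — and the main (minor) obstacle — is pinning down an explicit separable Banach space with separable dual that is provably not AUSable, so that the hypotheses of Theorem \ref{CorAUSablenessStableWSCCoarseLipEmb} apply and the conclusion is genuinely violated; once such an $X$ is fixed the argument is a one-line application of the theorem. Everything else is formal.
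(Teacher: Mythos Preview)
Your proposal is correct and follows exactly the route the paper intends: the corollary is marked \qed\ precisely because it is an immediate consequence of Theorem~\ref{CorAUSablenessStableWSCCoarseLipEmb} together with Aharoni's embedding into $c_0$ and the fact that $c_0$ is $p$-AUS for all $p$. The one loose end you flag---an explicit separable space with separable dual that is not AUSable---is supplied later in the paper by Theorem~\ref{pBSnottpBS}, which constructs a separable \emph{reflexive} space that is not AUSable; any such space (or any reflexive space of Szlenk index $>\omega$) completes your argument.
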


In case of embeddings into $c_0$, our methods actually give us a much stronger result.

\begin{thm}\label{ThmWSCCoarseIntoc0}
Let $X$ be a Banach space not containing $\ell_1$. If $X$ coarsely embeds into $c_0$ by a map which is weakly sequentially continuous, then $X$ isomorphically embeds into $c_0$.
\end{thm}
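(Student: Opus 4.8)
The plan is to exploit the very rigid structure of $c_0$ together with the weak-sequential-continuity hypothesis to promote the coarse embedding into an (uniformly continuous, eventually linear) embedding. First I would record the standard structure of $c_0$-valued maps: writing $f=(f_i)_i$ for the coordinate functionals $f_i=e_i^*\circ f$, each $f_i:X\to\R$ is a real-valued weakly sequentially continuous map, and since $X$ does not contain $\ell_1$, Rosenthal's $\ell_1$ theorem implies every bounded sequence in $X$ has a weakly Cauchy subsequence; a weakly sequentially continuous scalar function is then automatically \emph{sequentially weak-to-norm continuous on bounded sets}, hence (by Rosenthal again, using that relatively weakly compact subsets of a space without $\ell_1$ are exactly the bounded ones up to passing to weakly Cauchy subsequences) bounded and uniformly continuous on bounded sets. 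The first real step is therefore to show that the coordinate functionals $f_i$ have the property that for each bounded set $B\subset X$, only finitely many $f_i$ are ``large'' on $B$ in a quantitative sense — this is forced because $f(B)$ must be a bounded subset of $c_0$, so the tails go to zero \emph{pointwise}, and weak sequential continuity upgrades this to uniformity on weakly Cauchy sequences, and then on all of $B$ since $B$ itself is (after the $\ell_1$-free reduction) a set on which every sequence has a weakly Cauchy subsequence.

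Next I would use the coarse embedding inequalities $\rho_1(\|x-y\|)\le\|f(x)-f(y)\|_\infty\le\rho_2(\|x-y\|)$ with $\rho_1\to\infty$. The key geometric consequence is that for $\|x-y\|$ large, $\|f(x)-f(y)\|_\infty$ is large, so \emph{some} coordinate $f_i$ separates $x$ and $y$ by a large amount. Combining this with the previous paragraph — that on any fixed ball only finitely many coordinates are non-negligible — I would extract, by a stabilization/pigeonhole argument over an increasing exhaustion of $X$ by balls, a subsequence of coordinates $(f_{i_n})_n$ and, after translating and rescaling, build from them a bounded linear functional-like object. The cleanest route is probably this: for a fixed separated sequence $(x_n)$ in $X$ witnessing the relevant geometry, weak sequential continuity plus the $\ell_1$-free hypothesis lets one pass to a weakly Cauchy subsequence whose images $f(x_n)$ are then norm-convergent coordinatewise but still $\|\cdot\|_\infty$-separated, which pins down an infinite ``active'' set of coordinates and a weak$^*$-convergent sequence of normalized differences in $X^*$; one then checks these differences, in the limit, give a weak$^*$-null sequence in $X^*$ behaving like a $c_0$-basis, which by Bessaga--Pe\l czy\'nski yields a weak$^*$-null basic sequence in $X^*$ equivalent to the $c_0$-basis, i.e. a quotient of $X$ isomorphic to... — rather, the dual picture gives an isomorphic \emph{embedding} of $X$ into $c_0$ by the Johnson--Rosenthal/Zippin type argument once one has an appropriate weak$^*$-null sequence of functionals that is bounded below on a net dense enough in the sphere of $X$. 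Concretely, the target embedding $T:X\to c_0$ will be $Tx=(g_n(x))_n$ for a suitably chosen sequence $(g_n)\subset X^*$ with $\|g_n\|\le 1$, $g_n\xrightarrow{w^*}0$, and $\liminf_n|g_n(x)|\gtrsim\|x\|$; the first two conditions guarantee $T$ lands in $c_0$ and is bounded, the third gives the lower estimate, and such $(g_n)$ is exactly what the coarse-to-linear extraction above produces, using that $X$ does not contain $\ell_1$ to ensure the ``large separating coordinate'' always exists along a weakly Cauchy subsequence.

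The main obstacle I expect is the passage from ``large coarse separation'' to a \emph{uniform} linear lower bound $\liminf_n|g_n(x)|\gtrsim\|x\|$ valid for \emph{all} $x$ simultaneously, rather than just along the particular separated sequences one starts with. The coarse hypothesis only gives information at a fixed (large) scale and only compares pairs of points, whereas a linear embedding needs a single sequence of functionals working at all scales and all points; bridging this requires (i) using that $f_i$ is uniformly continuous on bounded sets to transfer estimates at one scale to a homogeneous estimate after normalizing, (ii) a diagonal argument over a countable dense subset of the sphere of $X$ (available since, although $X$ need not be separable a priori, one may and should first reduce to the separable case — here one uses that a coarse embedding into $c_0$ of a nonseparable space would still, restricted to any separable subspace, yield one, and separability of the image forces the relevant structure), and (iii) most delicately, controlling the \emph{interaction} between different coordinates so that the extracted $(g_n)$ remains weak$^*$-null — it is here that the finiteness of ``active'' coordinates on each ball, established in the first step, does the real work, since it prevents the mass of $g_n$ from escaping to infinity in $X^*$ in a way incompatible with landing in $c_0$. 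Once $(g_n)$ is in hand, verifying that $T$ is the desired isomorphic embedding is routine.
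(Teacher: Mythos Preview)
Your approach has a genuine gap, and it is exactly where you yourself flag the ``main obstacle'': there is no mechanism in your sketch for producing the linear functionals $(g_n)\subset X^*$. The coordinate maps $f_i=e_i^*\circ f$ are nonlinear real-valued functions on $X$; ``normalized differences'' of their values are scalars, not elements of $X^*$, and you never explain how to linearize. Coarse embeddings have no derivative in general, and weak sequential continuity does not help here. The sentence about passing to a weakly Cauchy subsequence and concluding coordinatewise norm convergence of $f(x_n)$ is also suspect: the hypothesis on $f$ concerns weakly \emph{convergent} sequences, and in a space not containing $\ell_1$ a weakly Cauchy sequence need not converge weakly, so you cannot invoke weak sequential continuity along it. Even granting every soft claim, the proposal never produces a single candidate $g_n\in X^*$, so the argument does not close.

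The paper takes a completely different route that avoids linearizing $f$ altogether. One works with weakly null trees: given a normalized weakly null tree $(x_{\bar n})_{\bar n\in[\N]^{<\omega}}$ in $X$, the partial sums $s_{\bar n}=\sum_{j\le |\bar n|}x_{n_1,\ldots,n_j}$ are pushed forward by $f$, and weak sequential continuity makes the resulting tree $\big(f(s_{\bar n})-f(s_{\bar n_-})\big)_{\bar n}$ weakly null in $c_0$. Because $c_0$ satisfies \emph{infinite} upper $\ell_\infty$-tree estimates (for any normalized weakly null infinite-height tree there is a branch with uniformly bounded partial sums), the coarse lower bound $\rho_f$ forces the same for $X$. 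One then quotes the Odell--Schlumprecht characterization (Theorem~4 of \cite{OdellSchlumprecht2006}): a separable space not containing $\ell_1$ that satisfies infinite upper $\ell_\infty$-tree estimates embeds isomorphically into $c_0$. The tree framework is what converts asymptotic information along \emph{all} weakly null sequences simultaneously into an isomorphic embedding; this is precisely the uniform-over-all-directions step your outline is missing.
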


The difference between $p$-asymptotic uniform smoothness and the alternating $p$-Banach-Saks property lies in the core of this paper. More precisely, we are interested in how far  a Banach space with the alternating $p$-Banach-Saks property is from being $p$-AUSable. With that in mind,  Section \ref{SectionComplexity} deals with the descriptive set theoretical aspect of those properties. Presicely, let $\SB$ denote the set of all subspaces of $C[0,1]$ endowed with the Effros-Borel structure (see Section \ref{SectionComplexity} for more details), so $\SB$ is a standard Borel space.  Theorem \ref{pBSnottpBS} provides an example of a Banach space which has the alternating $p$-Banach-Saks property but does not have an AUS renorming. This construction   gives us that the descriptive complexity of those classes in $\SB$  are different (see Theorem \ref{pBScompcoana}). 

We finish Section \ref{SectionComplexity} with a miscellaneous result which is also given by descriptive set theoretical methods. Precisely, the next result holds.

\begin{thm}\label{ThmCompBanachSaksUniversal}
Let $X$ be separable Banach space and assume that every Banach space with the Banach-Saks property  coarsely embeds into $ X$. Then every separable Banach space coarsely embeds into $X$. In particular, there exists $n\in\N$, so that $X^{(n)}$ is not separable.
\end{thm}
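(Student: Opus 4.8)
The plan is to exploit the descriptive set theoretic machinery built around $\SB$, the standard Borel space of separable Banach spaces. The key object is the class $\BS \subset \SB$ of separable Banach spaces with the Banach-Saks property. The first step is to establish that $\BS$ is \emph{not analytic} in $\SB$ — in fact one expects $\BS$ to be properly coanalytic (this should follow from the fact that failure of the Banach-Saks property is witnessed by a bounded sequence all of whose subsequences have Cesàro means bounded away from convergence, an existential quantification over a Borel-measurable family, making the complement analytic, together with a known non-Borelness result). Granting this, suppose toward a contradiction that $X$ is a separable Banach space into which every Banach space with the Banach-Saks property coarsely embeds, but into which \emph{not} every separable Banach space coarsely embeds. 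Then the class $\cC_X = \{Y \in \SB : Y \text{ coarsely embeds into } X\}$ is a proper subset of $\SB$ containing $\BS$.

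The crux is then to show that $\cC_X$ is analytic in $\SB$. This is a standard-type Effros-Borel computation: coarse embeddability of $Y$ into $X$ is equivalent to the existence of a map $f: Y \to X$ witnessing compatible upper and lower control functions, and by a perturbation/coding argument one may restrict to maps taking values in a countable dense set and defined on a countable dense subset, so that the statement ``$Y$ coarsely embeds into $X$'' becomes $\exists$ (a real coding such a map) followed by a Borel condition; hence $\cC_X$ is $\boldsymbol{\Sigma}^1_1$. Now apply a selection/separation argument: since $\BS \subseteq \cC_X$ and $\cC_X$ is analytic while $\BS$ is properly coanalytic, $\cC_X$ cannot equal $\SB$ only if it genuinely separates — but here I instead invoke that $\SB \setminus \BS$ is analytic and not Borel, so any analytic set containing $\BS$ must, by Lusin's separation theorem applied appropriately, actually swallow all of $\SB$: more precisely, if $\cC_X \neq \SB$ then $\SB \setminus \cC_X$ is a nonempty analytic set disjoint from $\BS$, i.e. contained in $\SB \setminus \BS$; combined with $\BS = \SB \setminus (\SB \setminus \BS) \subseteq \cC_X$ this would give a Borel set separating $\BS$ from $\SB \setminus \BS$ — but separation of the coanalytic-complete set $\BS$ from its complement by a Borel set would force $\BS$ to be Borel, a contradiction. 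Therefore $\cC_X = \SB$, i.e. every separable Banach space coarsely embeds into $X$.

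For the final assertion, if every separable Banach space coarsely embedded into $X$ while every iterated dual $X^{(n)}$ were separable, one derives a contradiction from known universality obstructions: the iterated duals $X^{(n)}$ would all sit among separable spaces, yet $X$ would be coarsely universal for all separable spaces, which is incompatible with a Szlenk-index / ordinal-index boundedness argument (a coarsely universal separable space cannot have all its iterated duals separable, since separability of all $X^{(n)}$ bounds certain ordinal indices, whereas universality forces them to be unbounded). Concretely, one can quote that a coarsely universal separable space must contain, coarsely, spaces of arbitrarily large Szlenk index, which is impossible if $X^{(n)}$ is separable for all $n$. Hence some $X^{(n)}$ is non-separable, completing the proof.

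The main obstacle I anticipate is the careful verification that $\cC_X$ — coarse embeddability into a \emph{fixed} $X$ — is genuinely analytic in the Effros-Borel structure on $\SB$; the quantifier ``$\exists f$ a coarse embedding'' ranges a priori over an uncountable function space, and the reduction to a Polish parameter space coding sufficiently-generic maps (via a countable net in $Y$, values in a countable dense subset of $X$, and rational control functions) requires a genuine, if routine, argument that such discretized maps suffice to witness coarse embeddability. The companion subtlety is pinning down the exact complexity of $\BS$: one needs that $\BS$ is not Borel (equivalently, $\SB \setminus \BS$ is properly analytic), which I would extract from the literature on Banach-Saks-type indices rather than reprove here.
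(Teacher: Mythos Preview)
The separation argument at the heart of your proof is flawed. You write that if $\cC_X \neq \SB$ then $\SB \setminus \cC_X$ is a nonempty \emph{analytic} set, but this is false: $\cC_X$ is analytic, so its complement is coanalytic, and there is no reason to expect it to be analytic. Even setting this aside, the Lusin separation theorem applies to two disjoint analytic sets, and neither $\BS$ (coanalytic) nor $\SB\setminus\cC_X$ (coanalytic) qualifies. More fundamentally, the implication ``$\BS$ is $\Pi^1_1$-complete and $\BS\subseteq\cC_X$ with $\cC_X$ analytic, therefore $\cC_X=\SB$'' is simply invalid: any $\Pi^1_1$-complete set is contained in many proper analytic (even Borel, even clopen) sets. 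Knowing only that $\BS$ is non-Borel gives no leverage on an arbitrary analytic superset of it.

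What the paper does instead is bypass separation entirely and build a concrete Borel reduction. One constructs a Borel map $\varphi:\Tr\to\SB$, $T\mapsto X_{2,C[0,1],T}$, with the dichotomy that $\varphi(T)$ has the Banach-Saks property whenever $T$ is well-founded, while $\varphi(T)$ contains an isometric copy of $C[0,1]$ whenever $T$ is ill-founded. If $C[0,1]$ did not coarsely embed into $X$, then (using that coarse embeddability passes to subspaces) $\varphi(T)\in\cC_X$ exactly when $T\in\WF$; hence $\WF=\varphi^{-1}(\cC_X)$ would be analytic, a contradiction. The point is that one needs an explicit family interpolating between Banach-Saks spaces and universal spaces, indexed Borel-measurably by trees; abstract complexity of $\BS$ alone is not enough.

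For the final clause, your appeal to a Szlenk-index boundedness argument is too vague and, as stated, not correct: Szlenk-type indices are not known to transfer under mere coarse embeddings in the generality you need. The paper instead invokes Kalton's theorem that if $c_0$ coarsely embeds into $X$ then some $X^{(n)}$ is non-separable; since $X$ is now coarsely universal, $c_0$ coarsely embeds into $X$, and the conclusion follows.
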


This paper is organized as follows. In Section \ref{SectionPrelim}, we cover the main definitions and terminologies needed for these notes. Section \ref{SectionConcentrationInequality} concerns  concentration inequalities. We introduce properties $\KR(p)$ and $\coKR(p)$, show that quasi-reflexive $p$-AUS spaces have $\KR(p)$ and prove Theorem \ref{CorIteratedDualspBanSaks} and \eqref{ItemHighDualsWithoutABSImpliesCoKR1} above.  Section \ref{SectionSeparableDuals} deals with concentration inequalities in the case where the spaces have  iterated duals of sufficiently high order separable, which gives us \eqref{ItemXAUSCompBiDualReflAUS}, \eqref{ItemXAUSBiDualSuperAUS} and \eqref{ItemSpaceWithDualsAUSComplemBidualBUTBad} above. In Section \ref{SectionWSCLipEquiv}, we prove Theorem \ref{ThmWSCCoarseEmbWeakerIsomorphcEmb}, and in Section  \ref{SectionWSCCoarseLipEmb} we deal with coarse (resp. coarse Lipschitz) embeddings which are also weakly sequentially continuous into spaces with good asymptotic smooth properties. Section \ref{SectionComplexity} concerns the complexity of some of the main properties which appear in this paper. At last,  in Section \ref{SectionProblems}, we finish this paper with a list of questions.

\section{Preliminaries}\label{SectionPrelim}

Let $(X,d)$ and $(Y,\partial)$ be metric spaces. Given a map $f:X\to Y$, define the modulus $\omega_f,\rho_f:[0,\infty)\to[0,\infty]$ by setting
\[\omega_f(t)=\sup\{\partial(f(x),f(y))\mid d(x,y) \leq t\}\]
and
\[\rho_f(t)=\inf\{\partial(f(x),f(y))\mid d(x,y) \geq t\},\]
for all $t\geq 0$. So
\[\omega_f( d(x,y))\leq \partial (f(x),f(y))\leq\rho_f(d(x,y)),\]
for all $x,y\in X$. The map $f$ is called \emph{coarse} if $\omega_f(t)<\infty$, for all $t\geq 0$, and it is called \emph{expanding} if $\lim_{t\to \infty}\rho_f(t)=\infty$. The map $f$ is called a \emph{coarse embedding} if it is both coarse and expanding. The map $f$ is \emph{cobounded} if $\sup_{y\in Y}d(y,f(X))<\infty$, and a coarse embedding which is also cobounded is  called a \emph{coarse equivalence}.

If there exists $L\geq 0$ such that $\omega_f(t)\leq Lt$, for all $t\geq 0$, $f$ is called \emph{Lipschitz}, and the \emph{Lipschitz constant of $f$}, denoted by $\Lip(f)$, is the infimum of all such $L\geq 0$. If $f$ is Lipschitz, $f^{-1}$ exists and it is Lipschitz,then  $f$ is called a \emph{Lispchitz embedding}. If there exists $L\geq 0$ such that $\omega_f(t)\leq Lt+L$ and $\rho_f(t)\geq L^{-1}t-L$, for all $t\geq 0$, $f$ is called a \emph{coarse Lipschitz embedding}.

 Given a Banach space $(X,\|\cdot\|_X)$, we view it as a metric space with metric $\|\cdot-\cdot\|_X$. Unless there is any chance of confusion, we always omit the index in the norm $\|\cdot\|_X$ and simply write $\|\cdot\|$. We denote the  closed unit ball of a Banach space $X$  by $B_X$ and its unit sphere by $\partial B_X$.

\subsection{Banach-Saks properties and asymptotic uniform smoothness}

A Banach space $X$ is said to have the \emph{Banach-Saks property} if  every bounded sequence $(x_n)_n$ in $X$ has  a subsequence $(x'_n)_n$  such that its  sequence of  Ces\`{a}ro means $(\frac{1}{n}\sum_{i=1}^nx'_i)_k$ converges.  We say that $X$ has the \emph{weak Banach-Saks property} if the same holds for every weakly null sequence. A Banach space $X$ is said to have the \emph{alternating Banach-Saks property} if  every bounded sequence $(x_n)_n$ in $X$ has a subsequence $(x'_n)_n$  such that its sequence of alternating Ces\`{a}ro means $(\frac{1}{n}\sum_{i=1}^n(-1)^ix'_i)_k$ converges.

\begin{defi}
Let $X$ be a Banach space and $p\in (1,\infty]$. For $C>0$, $X$ has the \emph{alternating  $p$-Banach-Saks property with constant $C$} (resp. \emph{weak $p$-Banach-Saks property with constant $C$}) if   for every sequence $(x_n)_n$ in $B_X$ (resp. weakly null sequence $(x_n)_n$ in $B_X$) and every $k\in\N$, there exists an infinite subset $\M\subset \N$ such that 
\[\Big\|\sum_{i=1}^k(-1)^i x_{n_i}\Big\|\leq Ck^{1/p}\text{ and }\Big(\text{resp. }\Big\|\sum_{i=1}^k x_{n_i}\Big\|\leq Ck^{1/p}\Big),\]
for all $n_1<\ldots<n_k\in\M$ (if $p=\infty$, we use the convention $1/\infty=0$).  A Banach space $X$ has the  \emph{alternating  $p$-Banach-Saks property} (resp. \emph{weak $p$-Banach-Saks property})  if it has the alternating $p$-Banach-Saks (resp. weak $p$-Banach-Saks) property with constant $C$ for some $C>0$.\footnote{Although this definition of the alternating $p$-Banach-Saks property is formally stronger than the definition given in the introduction, their equivalence follows trivially from standard Ramsey theory for colorings of $[\N]^k$ (see \cite{TodorcevicBook2010}, Theorem 1.3).}
\end{defi}

 A Banach space has the alternating $p$-Banach-Saks property if and only if it does not contain $\ell_1$ and it has the weak $p$-Banach-Saks property (cf. \cite{Braga2017Studia}, Proposition 3.1).

The set of finite codimensional subspaces of a Banach space $X$ is denoted by $\mathrm{cof}(X)$.

\begin{defi} Let $X$  be a Banach space. The \emph{modulus of asymptotic uniform smoothness of $X$} is defined as
\[\overline{\rho}_X(t)=\sup_{x\in \partial B_X}\inf_{E\in\mathrm{cof}(X)}\sup_{ y\in  \partial B_E}\|x+ty\|-1.\]
The Banach space $X$ is \emph{asymptotically uniformly smooth} (\emph{AUS} for short) if \[\lim_{t\to 0^+}\frac{\overline{\rho}_X(t)}{t}=0.\] If there exists $p\in (1,\infty)$ and $C>0$ such that \[\overline{\rho}_X(t)\leq Ct^p,\] for all $t\geq 0$,  $X$ is called \emph{$p$-asymptotically uniformly smooth} (\emph{$p$-AUS} for short). If $X$ has an equivalent norm in which $X$ is AUS (resp. $p$-AUS), $X$ is said to be \emph{AUSable} (resp. \emph{$p$-AUSable}).
\end{defi}

 Every  asymptotically uniformly smooth Banach space is $p$-asymptotically uniformly smooth for some $p\in (1,\infty)$ (see \cite{Raja2013},  Theorem 1.2).  A $p$-AUSable Banach space has the weak $p$-Banach-Saks property (see \cite{DimantGonzaloJaramillo2009}, Proposition 1.3 and Proposition 1.6). Since an AUSable Banach space does not contain $\ell_1$, it follows that a $p$-AUSable Banach space has the alternating $p$-Banach-Saks property.

\subsection{Directed sets}
 Let $I$ be a set and $\preceq$ be a partial-order on $I$. We say that $(I,\preceq)$ is a \emph{directed set} if for all $u,v\in I$ there exists $i\in I$ with $u,v\preceq i$. We always omit the order $\preceq$ and simply write $I$ for a directed set. Given $k\in \N$ and a directed set $I$, define
\[[I]^k=\{(u_1,\ldots,u_k)\in I^k\mid u_1\prec \ldots \prec u_k\},\]
$[I]^{\leq k}=\cup_{n=1}^k[I]^n$  and $[I]^{<\omega}=\cup_{k\in\N}[I]^k$. We  write elements in $[I]^k$ as $\bar{u}=(u_1,\ldots,u_k)$, and let $\min (\bar{u})=u_1$ and $\max(\bar{u})=u_k$. If $u\in I$ and $\bar{u},\bar{v}\in [I]^k$, we write $\bar{u}\preceq u$ if $\max(\bar u)\preceq u$,  and write $\bar{u}\preceq \bar{v}$ if $\max(\bar u)\preceq \min(\bar v)$. Analogously,  define $\bar{u}\prec u$, $\bar{u}\succ u$,  $\bar{u}\succeq u$ and $\bar  u\prec \bar v$.

Let $v\in I$ and $\bar{v}\in [I]^k$. For   $a\in \{v,\bar{v}\}$,   define 
\[\text{Succ}_\succeq (a)=\{u\in I\mid u\succeq a\}\ \text{ and }\  \text{Succ}_\succ (a)=\{u\in I\mid u\succ a\}.\]
 A directed set $I$ is said to have \emph{infinite tail}  if $\text{Succ}_\succeq(u)$ is infinite for all $u\in I$.\footnote{Since $I$ is directed, this is equivalent to $I$ not having a maximal element.} If  $I$ has infinite tail, the relation $\preceq$  defined above defines a \emph{directed} partial order on  $[I]^k$.\footnote{Notice that   $\bar{u}\preceq \bar{v}$ if and only if $u_j\preceq v_j$ for all $j\in\{1,\ldots,j\}$ also defines a directed partial-order on $[I]^k$ and this order would work just fine for our goals.} 
 
 Let $\bar{a}=(a_1,\ldots,a_k)$ be a $k$-tuple of non-zero real numbers. Define a distance $d_{\bar{a}}$ on $[I]^{k}$ by letting, for all $\bar{u},\bar{v}\in [I]^k$, 
\[d_{\bar{a}}(\bar{u},\bar{v})=\sum_{j\in F(u,v)}|a_j|,\]
where $F(u,v)=\{j\in \{1,\ldots,k\}\mid u_j\neq v_j\}$. Given directed sets $I_1,\ldots,I_\ell$ and $k_1,\ldots,k_\ell\in \N$, write $\bar{u}\in [I_1]^{k_1}\times\ldots\times [I_\ell]^{k_\ell}$ meaning $\bar{u}=(\bar{u}_j)_{j=1}^\ell$, where $\bar{u}_j\in[I_j]^{k_j}$, for all $j\in \{1,\ldots,\ell\}$. Let $k=\sum_{j=1}^\ell k_j$ and let $\bar{a}=(\bar{a}_j)_{j=1}^\ell$  be a $k$-tuple of non-zero real numbers, i.e., each $\bar{a}_j$ is a $k_j$-tuple of non-zero real numbers. We define a distance $d_{\bar{a}}$ on $[I_1]^{k_1}\times\ldots\times [I_\ell]^{k_\ell}$ by letting 
\[d_{\bar{a}}(\bar{u},\bar{v})=\sum_{j=1}^\ell d_{\bar{a}_j}(\bar{u}_j,\bar{v}_j),\]
for all $\bar{u},\bar{v}\in[I_1]^{k_1}\times\ldots\times [I_\ell]^{k_\ell}$. If $\bar{a}$ is the $k$-tuple with all coordinates equal to $1$,  write $d_{\mathrm{H}}=d_{\bar{a}}$. So, $d_{\mathrm{H}}$ is the Hamming distance on $[I_1]^{k_1}\times\ldots\times [I_\ell]^{k_\ell}$, i.e., $d_{\mathrm{H}}$ simply counts in how many coordinates $\bar{u}$ and $\bar{v}$ differ from each other.

 A \emph{cofinal ultrafilter} $\cV$ on a directed set $I$ is an ultrafilter which contains the cofinal filter base of $I$, i.e., $\text{Succ}_\succeq(i)\in \cV$, for all $i\in I$. By Zorn's lemma, every directed set $I$ has a cofinal ultrafilter on it.

Let  $I$ be a directed set with infinite tail,  $\cV$ be an ultrafilter on $I$,  $k\in \N$, $K$ be a compact topological space and  $f:[I]^k\to K$ be a map. We make constant use of the following abbreviation:
\[\lim_{\bar{u},\cV}f(\bar{u})\coloneqq \lim_{u_1,\cV}\ldots \lim_{u_k,\cV}f(u_1,\ldots,u_k).\]
If $k>1$ and $\cV$ is cofinal on $I$, for each $\bar{u}=(u_1,\ldots,u_{k-1})\in [I]^{k-1}$, define 
\[\cV_{\bar{u}}=\Big\{V\cap\text{Succ}_\succ(u_{k-1}) \mid V\in\cV\Big\}.\]
Since $I$ has infinite tail and $\cV$ is cofinal in $I$, $\cV_{\bar{u}}$ is an ultrafilter on $\text{Succ}_\succ(u_{k-1}) $. If $K$ is a compact topological space and $f:[I]^k\to K$ is a map, we use the following abuse of notation: 
\[\lim_{v,\cV}f(\bar{u},v)\coloneqq \lim_{v,\cV_{\bar{u}}}F(v),\]
where $F:\text{Succ}_\succ(u_{k-1}) \to K$ is given by $F(v)=f(\bar{u},v)$ for all $v\in\text{Succ}_\succ(u_{k-1}) $.

Let $I$ be a directed set and   $\cV$ be an ultrafilter on $I$. We define an ultrafilter $[\cV]^2$ on $[I]^2$ by setting, for $A\subset [I]^2$, 
\[A\in [\cV]^2\ \Leftrightarrow\ \{u\in I\mid \{v\in I\mid (u,v)\in A\}\in\cV\}\in \cV.\]
Since $\cV$ is an ultrafilter, it easily follows that $[\cV]^2$ is an ultrafilter on $[I]^2$ (see \cite{TodorcevicBook2010}, Chapter 1 for more details). If  $K$  is a compact topological space and  $(a_{u,v})_{(u,v)\in [I]^2}$ is a family in $K$, we use the abbreviation 
\[\lim_{u,v,\cV}a_{u,v}\coloneqq \lim_{(u,v),[\cV]^2}a_{u,v}.\]
If $\ell\in\N$ and $(a_{\bar{u},\bar{v}})_{\bar{u},\bar{v}\in [I]^\ell}$ is a family in $K$,  we use the  abbreviation
\[\lim_{\bar{u},\bar{v},\cV}a_{\bar{u},\bar{v}}\coloneqq \lim_{u_1,v_1,\cV}\ldots\lim_{u_\ell,v_\ell,\cV}a_{\bar{u},\bar{v}}.\]

\subsection{Orlicz sequence spaces}
A non-zero function $F:[0,\infty)\to[0,\infty)$ is an \emph{Orlicz function} if it is continuous, nondecreasing,  convex, and  satisfies $F(0)=0$. For each  Orlicz function $F:[0,\infty)\to[0,\infty)$, define a set $\ell_F\subset \R^\N$ by setting 
 \[\ell_F=\Big\{(x_n)_n\in\R^\N\mid \exists\lambda>0,\ \sum_{n=1}^\infty F\Big(\frac{|x_n|}{\lambda}\Big)<\infty\Big\}.\]
For each $\bar x=(x_n)_n\in \ell_F$, let 
\[\|\bar x\|_{F}\coloneqq\inf\Big\{\lambda>0\mid  \sum_{n=1}^\infty F\Big(\frac{|x_n|}{\lambda}\Big)\leq 1\Big\}.\]
It is well known that $\|\cdot\|_F$ is a norm on $\ell_F$ which makes $\ell_F$ into a Banach space. The space $(\ell_F,\|\cdot\|_F)$ is called the \emph{Orlicz sequence space associated to $F$}. We refer to \cite{LindenstraussTzafriri1971} for more on Orlicz sequence spaces. In these notes, for each $n\in\N$, we identify $\R^n$ with its natural copy in $\R^\N$. Therefore, if $\bar x\in \R^n$, the term $\|\bar x\|_F$ is well defined.  

\begin{remark}\label{RamarkRhop}
Given a Banach space $X$, the modulus of asymptotic smoothness of $X$, $\overline{\rho}_X$, is an Orlicz function. Hence, $\|\cdot\|_{\overline{\rho}_X}$ is well defined. Let $C>0$, $p\in (1,\infty)$  and assume that $\overline{\rho}_X(t)\leq Ct^p$, for all $t\geq 0$. It is easy to see that, for all $k\in\N$,  $\|\bar 1\|_{\overline{\rho}_X}\leq C^{1/p}k^{1/p}$, where $\bar 1\in \R^k$  is the $k$-tuple whose coordinates are $1$. 
\end{remark}

\section{Coarse Lipschitz geometry and concentration inequalities}\label{SectionConcentrationInequality}

We start this section introducing properties $\KR(p)$ and $\coKR(p)$ for metric spaces. We show that quasi-reflexive Banach spaces with equivalent $p$-AUS norms have property $\KR(p)$ (see Corollary \ref{CorLancienRajaGEN}). Moreover, we provide a method to show that Banach spaces have property $\coKR(p)$ (see Theorem \ref{ThmBadDualsImpliesCoKRp}). Those results are used  to obtain applications to the coarse Lipschitz geometry of Banach spaces (see  Theorem  \ref{CorIteratedDualspBanSaks}) and to show that if any  iterated dual of even order of a Banach space has an $\ell_1$-spreading model then the Banach space has $\coKR(1)$ (see Corollary \ref{CorHighDualsWithoutABSImpliesCoKR1}).

\subsection{Properties $\KR(p)$ and $\coKR(p)$}

As mentioned in the introduction, when dealing with non-reflexive Banach spaces $X$, instead of looking at maps from $[\N]^k$ into $X$, it is more natural to work with general directed sets and study the behavior of maps which assign  tuples in some directed set to elements in $X$. The following is a central definition in these notes.

\begin{defi}\label{DefiKRp}
Let $(X,d)$ be a metric space and $p\in (1,\infty]$. 
\begin{enumerate}[(i)]
\item Given $C\geq 1$, $\ell\in\N$ and directed sets $I_1,\ldots,I_\ell$ with infinite tail, the Banach space $X$ is said to have  \emph{$\KR(p,C,\ell,(I_i)_{i=1}^\ell)$} if   for all $k\in\N$,   all Lipschitz maps 
\[f:([I_1]^{ k}\times\ldots\times [I_\ell]^{ k},d_{\mathrm{H}})\to X,\]
and all cofinal ultrafilters $\cV_i$ on $I_i$, for $i\in\{1,\ldots,\ell\}$, it follows that  
\[\lim_{\bar{u}_1,\bar{v}_1,\cV_1}\ldots\lim_{\bar{u}_\ell,\bar{v}_\ell,\cV_\ell}d\big(f(\bar{u}),f(\bar{v})\big)\leq (C\Lip(f)+C)\ell^{1/p} k^{1/p}.\]
(if $p=\infty$, we use the convention $1/\infty=0$).
\item Given $C\geq 1$, the Banach space $X$ is said to have  \emph{$\KR(p,C)$} if, for all $\ell\in \N$ and all directed sets $I_1,\ldots,I_\ell$ with infinite tail, $X$ has  $\KR(p,C,\ell, (I_i)_{i=1}^\ell)$.
\item   The Banach space $X$ is said to have  \emph{$\KR(p)$} if there exists $C\geq 1$ for which $X$  has $\KR(p,C)$.
\end{enumerate}
\end{defi}

Property $\KR(p)$ should be thought of as a concentration inequality for maps from tuples in directed sets  into $(X,d)$.

Besides the metric $d_{\mathrm{H}}$ on $[I]^k$, a different metric which gives us appropriate  lower estimates is needed. Precisely, let $I$ be a directed set and  $k\in\N$. Define $d_\Delta\coloneqq d^k_\Delta$ as the \emph{symmetric difference metric} on $[I]^{k}$, i.e., 
\[d_\Delta(\bar{u},\bar{v})=|\bar{u}\Delta\bar{v}|,\]
for all $\bar{u},\bar{v} \in[I]^{k}$. The following is a central notion in these notes and it represents an obstacle for  property $\KR(p)$  to hold.

\begin{defi}\label{DefiCoKRp}
Let $(X,d)$ be a metric space and $p\in [1,\infty)$.
\begin{enumerate}[(i)]
\item Given $C>0$, $\ell\in\N$ and directed sets $I_1,\ldots,I_\ell$ with infinite tail, the Banach space $X$ is said to have  \emph{$\coKR(p,C,\ell,(I_i)_{i=1}^\ell)$} if   for all $k\in\N$,   there exists a  $C$-Lipschitz map 
\[f:([I_1]^{ k}\times \ldots\times [I_\ell]^{ k},d_{\mathrm{H}})\to X,\]
such that, for all $\bar u_1,\bar v_1\in [I_1]^k$ and all cofinal ultrafilters $\cV_i$ on $I_i$, for $i\in\{2,\ldots,\ell\}$, it follows that  
\[\lim_{\bar{u}_2,\bar{v}_2,\cV_2}\ldots\lim_{\bar{u}_\ell,\bar{v}_\ell,\cV_\ell} d\big(f(\bar u,\bar v)\big) \geq  C^{-1}d_\Delta(\bar u_1,\bar v_1)^{1/p}-C.\footnote{Notice that $f$ is $C$-Lipschitz with respect to  the metric $d_\mathrm{H}$, not $d_\Delta$.}\]
\item Given $C>0$, the Banach space $X$ is said to have \emph{$\coKR(p,C)$} if there exists  $\ell\in\N$ and  directed sets $I_1,\ldots,I_\ell$ with infinite tail for which $X$ has  $\coKR(p,C,\ell,(I_i)_{i=1}^\ell)$.
\item  The Banach space $X$ is said to have  \emph{$\coKR(p)$} if there exists $C>0$ for which $X$  has $\coKR(p,C)$.
\end{enumerate}
\end{defi}

A word about $\coKR(p)$ is needed. It would be desirable to replace the inequality in Definition \ref{DefiCoKRp}(i) above by a simpler inequality such as 
\[d\big(f(\bar u,\bar v)\big) \geq  C^{-1}d_\Delta(\bar u,\bar v)^{1/p}-C,\]
for all $\bar{u},\bar{v}$. However, as it will be clear in the following sections, this is not  possible by our methods. However, although weaker, this formulation of $\coKR(p)$  is enough for our applications.\footnote{In Section \ref{SectionSeparableDuals}, we define a more quantitative version of $\coKR(p)$ for maps $[\N]^{\ell k}\to X$. We refer the impatient reader to Definition \ref{DefiModulusKappa}.}

The next proposition gathers some trivial features regarding properties  $\KR(p)$ and $\coKR(p)$. If $P$ is a property of metric spaces, we say that $P$ is \emph{stable under coarse Lipschitz embeddings} if the coarse Lipschitz embeddability of a metric space $(X,d)$ into a metric space $(Y,\partial)$ with property $P$ implies that $(X,d)$ has property $P$.  As usual in mathematics, if $P$ is a property, $\neg P$ denotes the negation of $P$.

\begin{prop}\label{PropKRpCoKRpProperties}
The following holds for the class of metric spaces.
\begin{enumerate}[(i)]
\item For $q,p\in [1,\infty]$ with $q<p$, $\coKR(q)$ implies $\neg\KR(p)$.
\item For all $p\in (1,\infty]$, $\KR(p)$ is stable under coarse Lipschitz embeddings.
\item For all $p\in[1,\infty)$,  $\neg\coKR(p)$ is stable under coarse Lipschitz embeddings.
\end{enumerate}\qed
\end{prop}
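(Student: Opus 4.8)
The three statements are all bookkeeping around the two definitions, so I will indicate the key moves rather than chase every constant.

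\emph{Part (i).} Suppose $X$ has $\coKR(q,C,\ell,(I_i)_{i=1}^\ell)$ and, towards a contradiction, that $X$ has $\KR(p,C')$ for some $C'\ge 1$. Fix $k\in\N$ and let $f\colon([I_1]^k\times\cdots\times[I_\ell]^k,d_{\mathrm H})\to X$ be the $C$-Lipschitz map provided by $\coKR(q,C,\ell,(I_i))$. Since $f$ is in particular Lipschitz, $\KR(p,C',\ell,(I_i))$ applies to $f$ and gives
\[\lim_{\bar u_1,\bar v_1,\cV_1}\Big(\lim_{\bar u_2,\bar v_2,\cV_2}\cdots\lim_{\bar u_\ell,\bar v_\ell,\cV_\ell}d(f(\bar u),f(\bar v))\Big)\le (C'C+C')\ell^{1/p}k^{1/p}.\]
On the other hand, the defining inequality of $\coKR(q)$ says that the inner iterated limit is $\ge C^{-1}d_\Delta(\bar u_1,\bar v_1)^{1/q}-C$ for every $\bar u_1,\bar v_1\in[I_1]^k$. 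Taking the outer limit $\lim_{\bar u_1,\bar v_1,\cV_1}$ and using that ultrafilter limits are monotone and commute with affine maps, together with the observation that along a cofinal ultrafilter on a directed set with infinite tail the $2k$ entries of $\bar u_1,\bar v_1$ become pairwise distinct, so $\lim_{\bar u_1,\bar v_1,\cV_1}d_\Delta(\bar u_1,\bar v_1)=2k$, one obtains
\[C^{-1}(2k)^{1/q}-C\le (C'C+C')\ell^{1/p}k^{1/p}.\]
Since $q<p$ and $\ell,C,C'$ are fixed, the left-hand side grows strictly faster in $k$ than the right-hand side, a contradiction for $k$ large. Hence $X$ does not have $\KR(p)$.

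\emph{Parts (ii) and (iii): the common observation.} On $([I_1]^k\times\cdots\times[I_\ell]^k,d_{\mathrm H})$ the metric $d_{\mathrm H}$ is $\{0,1,\dots,\ell k\}$-valued, so distinct points are at distance $\ge 1$, and the domain has $d_{\mathrm H}$-diameter $\le\ell k$. Consequently, if $g\colon X\to Y$ is a coarse Lipschitz embedding with constant $L\ge 1$ (so $L^{-1}d_X(x,x')-L\le d_Y(g(x),g(x'))\le Ld_X(x,x')+L$) and $f$ is $\lambda$-Lipschitz into $X$, then for $\bar u\ne\bar v$ we have $d_Y(g(f(\bar u)),g(f(\bar v)))\le Ld_X(f(\bar u),f(\bar v))+L\le(L\lambda+L)d_{\mathrm H}(\bar u,\bar v)$, so $g\circ f$ is $(L\lambda+L)$-Lipschitz; moreover $d_X(f(\bar u),f(\bar v))\le Ld_Y(g(f(\bar u)),g(f(\bar v)))+L^2$, and since the range of $f$ is bounded all the ultrafilter limits below exist. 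For (ii), assume $Y$ has $\KR(p,C)$; given a $\lambda$-Lipschitz $f$ into $X$, apply $\KR(p,C,\ell,(I_i))$ to $g\circ f$, then push the resulting bound through $d_X(f(\bar u),f(\bar v))\le Ld_Y(\cdots)+L^2$ and take the iterated limits (monotone, affine) to get
\[\lim_{\bar u_1,\bar v_1,\cV_1}\cdots\lim_{\bar u_\ell,\bar v_\ell,\cV_\ell}d_X(f(\bar u),f(\bar v))\le L\big(C(L\lambda+L)+C\big)\ell^{1/p}k^{1/p}+L^2.\]
Using $\ell^{1/p}k^{1/p}\ge 1$ to absorb the additive $L^2$, the right-hand side is $\le(C'\lambda+C')\ell^{1/p}k^{1/p}$ with $C':=CL^2+CL+L^2+1$, depending only on $C$ and $L$. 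Thus $X$ has $\KR(p,C',\ell,(I_i))$ for every $\ell$ and $(I_i)$, i.e.\ $X$ has $\KR(p)$.

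\emph{Part (iii).} By contraposition it suffices to show that if $X$ has $\coKR(p,C,\ell,(I_i))$ and $X$ coarse Lipschitzly embeds into $Y$ via $g$ (constant $L\ge 1$), then $Y$ has $\coKR(p,C'',\ell,(I_i))$ with $C'':=LC+L$. For each $k$ take the $C$-Lipschitz witness $f$ for $X$; then $h:=g\circ f$ is $(LC+L)$-Lipschitz into $Y$, and from $d_Y(h(\bar u),h(\bar v))\ge L^{-1}d_X(f(\bar u),f(\bar v))-L$ and the $\coKR(p)$ inequality for $f$, taking the iterated limit over $\cV_2,\dots,\cV_\ell$,
\[\lim_{\bar u_2,\bar v_2,\cV_2}\cdots\lim_{\bar u_\ell,\bar v_\ell,\cV_\ell}d_Y(h(\bar u),h(\bar v))\ge (LC)^{-1}d_\Delta(\bar u_1,\bar v_1)^{1/p}-(L^{-1}C+L)\]
for all $\bar u_1,\bar v_1$ and all cofinal $\cV_2,\dots,\cV_\ell$. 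Since $L\ge 1$ gives $C''\ge LC$, $C''\ge C+L\ge L^{-1}C+L$ and $C''\ge\Lip(h)$, this is exactly the defining inequality of $\coKR(p,C'',\ell,(I_i))$; hence $Y$ has $\coKR(p)$.

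\emph{Expected main obstacle.} The only point that is not pure affine manipulation of the two defining inequalities plus the integer-valuedness of $d_{\mathrm H}$ is the claim used in (i) that $\lim_{\bar u_1,\bar v_1,\cV_1}d_\Delta(\bar u_1,\bar v_1)=2k$ for cofinal ultrafilters. This is where the "infinite tail'' hypothesis is essential, and it must be verified by unwinding the iterated‑ultrafilter‑limit notation of the Preliminaries (each coordinate is pushed past all previously fixed ones, so the $2k$ entries of $\bar u_1$ and $\bar v_1$ are eventually pairwise distinct, making the symmetric‑difference metric attain its maximum $2k$).
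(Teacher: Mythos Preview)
The paper omits the proof entirely (note the \qed\ immediately after the statement and the preceding sentence ``The next proposition gathers some trivial features\ldots''), so there is no authorial argument to compare against; your write-up is correct and is precisely the routine verification the author had in mind. Your flagged ``main obstacle''---that $\lim_{\bar u_1,\bar v_1,\cV_1}d_\Delta(\bar u_1,\bar v_1)=2k$ for cofinal ultrafilters on directed sets with infinite tail---is indeed the only point requiring a moment's thought, and your justification via the iterated-limit conventions of Section~\ref{SectionPrelim} is sound.
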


\begin{remark}
Proposition \ref{PropKRpCoKRpProperties} is the reason for the affine bounds in Definition \ref{DefiKRp}  and Definition \ref{DefiCoKRp}. More precisely, when working with Banach spaces, since one can always rescale a map with range in a vector space, the affine bounds are not necessary, and one can forget the ``$\pm C$'' in the right hand side of  the inequalities in Definition \ref{DefiKRp}(i)  and Definition \ref{DefiCoKRp}(ii). However, in order to have that those properties are stable under coarse Lipschitz embeddings into \emph{metric} spaces,  the ``$\pm C$'' is necessary. 
\end{remark}

\subsection{Quasi-reflexive $p$-AUS spaces}
The following proposition was proved in \cite{LancienRaja2017}, Proposition 2.1, for weak$^*$ null sequences. Since the  same proof works for arbitrary weak$^*$ null nets, we omit its proof here. 

\begin{prop}\label{PropLancienRaja}
Let $X$ be a Banach space. For all $t\in (0,1)$, all weak$^*$ null net $(x^{**}_i)_{i\in I}$ in $B_{X^{**}}$, all ultrafilters $\cU$ on $I$, and all $x\in S_X$, 
\[\lim_{i,\cU}\|x+tx^{**}_i\|\leq 1+\bar\rho_X(t,x).\]\qed
\end{prop}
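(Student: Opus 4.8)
The statement to prove is Proposition~\ref{PropLancienRaja}: for all $t\in(0,1)$, all weak$^*$ null nets $(x^{**}_i)_{i\in I}$ in $B_{X^{**}}$, all ultrafilters $\cU$ on $I$, and all $x\in S_X$, one has $\lim_{i,\cU}\|x+tx^{**}_i\|\leq 1+\bar\rho_X(t,x)$, where $\bar\rho_X(t,x)=\inf_{E\in\cof(X)}\sup_{y\in\partial B_E}\|x+ty\|-1$ is the pointwise modulus of asymptotic uniform smoothness.

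\medskip

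\textbf{Proof plan.} The plan is to unwind the definition of $\bar\rho_X(t,x)$ and reduce the estimate to finite-codimensional subspaces of $X$, then use weak$^*$ lower semicontinuity of the norm on $X^{**}$ together with the weak$^*$ nullity of the net to push the net's contribution into an arbitrary finite-codimensional subspace. Fix $\eps>0$ and choose $E\in\cof(X)$ with $\sup_{y\in\partial B_E}\|x+ty\|\leq 1+\bar\rho_X(t,x)+\eps$; say $E=\bigcap_{j=1}^m\ker\varphi_j$ for some $\varphi_1,\dots,\varphi_m\in X^*$. First I would fix a norming functional: by Hahn--Banach pick $x^{***}\in S_{X^{***}}$ (or rather work with $\lim_{i,\cU}\|x+tx^{**}_i\|$ realized through functionals) — more concretely, for each $i$ choose $\psi_i\in S_{X^*}$ with $\psi_i(x+tx^{**}_i)=\|x+tx^{**}_i\|$, meaning $x^{**}_i(\psi_i)$ is real and $\psi_i(x)+tx^{**}_i(\psi_i)=\|x+tx^{**}_i\|$. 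Passing to the ultrafilter limit, $\lim_{i,\cU}\psi_i=\psi$ exists in the weak$^*$ topology of $X^*$ with $\psi\in B_{X^*}$.

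\medskip

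The key step is then to control $\lim_{i,\cU}\psi_i(x+tx^{**}_i)$. Write $\psi_i(x+tx^{**}_i)=\psi_i(x)+tx^{**}_i(\psi_i)$. Since $(x^{**}_i)$ is weak$^*$ null, $x^{**}_i(\psi)\to 0$, so the ``diagonal'' term $x^{**}_i(\psi)$ vanishes along $\cU$; the difficulty is that $\psi_i(x^{**}_i)$ pairs the net against the \emph{varying} functionals $\psi_i$, not against the fixed limit $\psi$. This is exactly the point where the finite-codimensional subspace $E$ enters: the standard argument (as in Kalton's work and in \cite{LancienRaja2017}) perturbs $\psi_i$ by an element of $\mathrm{span}\{\varphi_1,\dots,\varphi_m\}+\mathbb{R}\psi$ so that the perturbed functional, restricted suitably, witnesses that $tx^{**}_i$ behaves like $t$ times a unit vector of $E^{\perp\perp}$ in the bidual, whence $\|x+tx^{**}_i\|$ is controlled by $\sup_{y\in\partial B_E}\|x+ty\|\leq 1+\bar\rho_X(t,x)+\eps$ up to an error that tends to $0$ along $\cU$. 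In detail: I would show $\lim_{i,\cU}\mathrm{dist}\big(x^{**}_i, E^{\perp\perp}\cap \{w: w\text{ a reasonable ball element}\}\big)$ is small using that $x^{**}_i\to 0$ weak$^*$ kills the finitely many functionals $\varphi_j$, then invoke Goldstine to replace $x^{**}_i$ by a net in $B_E$ without changing the relevant norm in the limit.

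\medskip

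\textbf{Main obstacle.} The delicate part is the interchange between the ultrafilter limit of the norming functionals $\psi_i$ and the weak$^*$ limit of the net $x^{**}_i$: naively $\lim_{i,\cU}\psi_i(x^{**}_i)$ need not be $0$ even though $\psi_i\rightharpoonup^*\psi$ and $x^{**}_i\rightharpoonup^*0$, because one is taking a limit of a product of two things each converging in a weak topology. The resolution is precisely the role of $\cof(X)$ in the definition of $\bar\rho_X$: one does not need $\psi_i(x^{**}_i)\to0$; one only needs that modulo a fixed finite-codimensional subspace $E$ the net $x^{**}_i$ is asymptotically a unit (or sub-unit) vector of $B_{E^{**}}$, and then the supremum defining $\bar\rho_X(t,x)$ over $\partial B_E$ does the work. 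I would carry this out by: (1) fixing $E$ and its defining functionals; (2) for each $i$ decomposing the optimal functional and using weak$^*$ nullity against the finitely many $\varphi_j$ and against $\psi$; (3) applying Goldstine's theorem to approximate $x^{**}_i$ weak$^*$ by elements of $B_X$, intersected with the constraint of lying almost in $E$; (4) concluding $\|x+tx^{**}_i\|\leq \sup_{y\in\partial B_E}\|x+ty\|+o(1)\leq 1+\bar\rho_X(t,x)+\eps+o(1)$ along $\cU$; and (5) letting $\eps\to0$. Since the proof is identical to the sequential case in \cite{LancienRaja2017}, Proposition~2.1, with ``sequence'' replaced by ``net'' throughout — the only facts used are weak$^*$ compactness of balls, Goldstine, and that weak$^*$ convergence of a net tests against each fixed functional — I would simply note this and refer to that proof.
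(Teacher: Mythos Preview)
Your proposal is correct and lands on exactly the same approach as the paper: the paper does not give a proof at all but simply observes that Proposition~2.1 of \cite{LancienRaja2017} was stated for weak$^*$ null sequences and that the identical argument works verbatim for nets, which is precisely your final sentence. Your preliminary sketch with norming functionals $\psi_i$ is more elaborate than needed (the cleaner route is to project $x^{**}_i$ onto $E^{\perp\perp}$ along a finite-dimensional complement and use that the finite-dimensional part goes to zero in norm by weak$^*$ nullity, then invoke Goldstine for $E$), but your identification of the key ingredients---finite codimension of $E$, weak$^*$ nullity testing against each fixed functional, Goldstine, and weak$^*$ lower semicontinuity of the norm---is accurate, and your conclusion matches the paper's.
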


The following theorem  is the  motivation for the definition of $\KR(p)$. Also, as it will be clear to the reader familiar with the concentration inequality of \cite{LancienRaja2017}, the next result is a generalization of Theorem 2.4 of \cite{LancienRaja2017} to arbitrary directed sets instead of $[\N]^k$.

\begin{thm}\label{TheoremLancienRajaGEN}
Let $k\in \N$ and $k_1,\ldots,k_\ell\in \N$ be such that $\sum_{i=1}^\ell k_i=k$. Let $I_1,\ldots,I_\ell$ be directed sets with infinite tail and let $X$ be a quasi-reflexive  Banach space. For all  $k$-tuples $\bar{a}$ of non-zero reals,   all Lipschitz maps 
\[f:([I_1]^{k_1}\times\ldots\times [I_\ell]^{k_\ell},d_{\bar{a}})\to X^{**},\]
and all cofinal ultrafilters $\cV_i$ on $I_i$, for  $i\in\{1,\ldots,\ell\}$, we have that 
\[\lim_{\bar{u}_1,\bar{v}_1,\cV_1}\ldots \lim_{\bar{u}_\ell,\bar{v}_\ell,\cV_\ell}\|f(\bar{u})-f(\bar{v})\|\leq 2e\Lip(f)\|\bar{a}\|_{\bar\rho_X}.\]
\end{thm}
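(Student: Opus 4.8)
The plan is to prove the theorem by induction on $\ell$, with the base case $\ell = 1$ being the heart of the argument and the inductive step being a routine iteration. Since $f$ takes values in $X^{**}$ and $X$ is quasi-reflexive, we have a bounded projection $P : X^{**} \to X$ (or we can work with the fact that $X^{**}/X$ is finite-dimensional); more importantly, bounded nets in $X^{**}$ have weak$^*$-convergent subnets, so taking iterated limits along cofinal ultrafilters is legitimate and the resulting limits exist in $X^{**}$ (with its weak$^*$-topology, applied coordinatewise). The key tool is Proposition \ref{PropLancienRaja}, which controls $\lim_{i,\cU}\|x + t x^{**}_i\|$ by $1 + \bar\rho_X(t,x)$ for weak$^*$-null nets.

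For the base case $\ell = 1$, I would adapt the argument of \cite{LancienRaja2017}, Theorem 2.4, replacing sequences indexed by $\N$ with nets indexed by the directed set $I_1$ and limits along subsequences with limits along the cofinal ultrafilter $\cV_1$. Write $k = k_1$ and $\bar a = (a_1,\ldots,a_k)$. For $\bar u = (u_1,\ldots,u_k) \in [I_1]^k$, one forms the iterated weak$^*$-limits $z_j = \wslim_{u_j,\cV_1}\cdots$ coordinate by coordinate, and the crucial observation is that for fixed $u_1,\ldots,u_{j-1}$, the net $\big(f(u_1,\ldots,u_{j-1},u_j,\ldots) - (\text{limit})\big)$ indexed by $u_j$ is weak$^*$-null in a ball of radius $\sim \Lip(f)|a_j|$ (after renormalizing). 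One then telescopes: writing $f(\bar u) - f(\bar v)$ as a sum of $k$ differences, each obtained by changing one coordinate at a time and passing to the ultrafilter limit, and applies Proposition \ref{PropLancienRaja} at each step to the partial sums. The convexity and subadditivity properties of the Orlicz function $\bar\rho_X$, together with the definition of $\|\bar a\|_{\bar\rho_X}$, convert the accumulated estimate $\prod_j (1 + \bar\rho_X(|a_j|/\lambda, \cdot))$-type bound into the clean inequality with the factor $2e$ — the $e$ arising from the standard estimate $\prod(1 + t_j) \leq e^{\sum t_j}$ when $\sum \bar\rho_X(|a_j|/\lambda) \leq 1$, and the factor $2$ from symmetrizing between $\bar u$ and $\bar v$.

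For the inductive step from $\ell - 1$ to $\ell$, I would fix $\bar u_1, \bar v_1, \ldots, \bar u_{\ell-1}, \bar v_{\ell-1}$ and apply the $\ell = 1$ case to the "last block" map $\bar w \mapsto f(\bar u_1,\ldots,\bar u_{\ell-1}, \bar w)$ on $[I_\ell]^{k_\ell}$, which has Lipschitz constant at most $\Lip(f)$ with respect to $d_{\bar a_\ell}$; this bounds $\lim_{\bar u_\ell,\bar v_\ell,\cV_\ell}\|f(\ldots,\bar u_\ell) - f(\ldots,\bar v_\ell)\|$ by $2e\Lip(f)\|\bar a_\ell\|_{\bar\rho_X}$. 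Then the "diagonal" term, where we compare $f(\bar u_1,\ldots,\bar u_{\ell-1}, \bar w)$ across changes in the first $\ell-1$ blocks with the last block held at a common value, is handled by the inductive hypothesis, giving the bound $2e\Lip(f)\|(\bar a_1,\ldots,\bar a_{\ell-1})\|_{\bar\rho_X}$. A triangle-inequality splitting plus the fact that the Orlicz norm is subadditive across blocks (i.e. $\|\bar a\|_{\bar\rho_X} \leq \sum_j \|\bar a_j\|_{\bar\rho_X}$ is the wrong direction, but one needs $\|\bar a_j\|_{\bar\rho_X} \leq \|\bar a\|_{\bar\rho_X}$ coordinatewise and a convexity argument to recombine) closes the induction.

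I expect the main obstacle to be the base case bookkeeping: carefully justifying that the iterated ultrafilter limits produce genuinely weak$^*$-null nets at each coordinate (so Proposition \ref{PropLancienRaja} applies), and managing the renormalization constant $\lambda = \|\bar a\|_{\bar\rho_X}$ so that the telescoped product of factors $(1 + \bar\rho_X(|a_j|/\lambda, x_j))$ is genuinely controlled by $e$ via $\sum_j \bar\rho_X(|a_j|/\lambda) \leq 1$. The quasi-reflexivity is used precisely to guarantee the weak$^*$-compactness needed for all these limits to make sense in $X^{**}$ and, via the projection onto $X$, to identify the directions along which $\bar\rho_X$ controls growth; this is the same role it plays in \cite{LancienRaja2017}, so no new idea is needed beyond transcribing sequences to nets indexed by directed sets and subsequential limits to ultrafilter limits.
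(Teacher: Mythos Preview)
Your base case $\ell=1$ is essentially correct in spirit and is, in fact, the whole argument: the paper proves the theorem by induction on the total number of coordinates $k = \sum_i k_i$, not on the number of blocks $\ell$. At each step one peels off the \emph{last} coordinate (whether or not this empties a block), defines $g(\bar u) = \wslim_{u_k,\cV_\ell} f(\bar u, u_k)$, and uses Proposition~\ref{PropLancienRaja} together with an auxiliary recursive norm $N_k(\xi_1,\ldots,\xi_k) = N_2(N_{k-1}(\xi_1,\ldots,\xi_{k-1}),\xi_k)$ built from $\bar\rho_X$; the constant $e$ then comes from Kalton's inequality $N_k(\bar a)\leq e\|\bar a\|_{\bar\rho_X}$, not from a product bound.

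Your inductive step $\ell-1 \to \ell$, however, has a genuine gap. When you fix $\bar u_1,\ldots,\bar u_{\ell-1}$ and $\bar v_1,\ldots,\bar v_{\ell-1}$ and look at the inner limit over $\bar u_\ell,\bar v_\ell$, the quantity $\|f(\bar u_1,\ldots,\bar u_\ell)-f(\bar v_1,\ldots,\bar v_\ell)\|$ is \emph{not} of the form $\|g(\bar u_\ell)-g(\bar v_\ell)\|$ for a single Lipschitz map $g$ on $[I_\ell]^{k_\ell}$, so the $\ell=1$ case does not apply directly. If instead you triangle-split as you suggest, you obtain a bound of the form $2e\Lip(f)\big(\|\bar a_\ell\|_{\bar\rho_X}+\|(\bar a_1,\ldots,\bar a_{\ell-1})\|_{\bar\rho_X}\big)$, and there is no convexity argument that collapses this sum back to $\|\bar a\|_{\bar\rho_X}$: for disjointly supported vectors in an Orlicz space the inequality goes the other way. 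You flagged this yourself but did not resolve it.

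Finally, your description of the role of quasi-reflexivity is slightly off. Weak$^*$-compactness of $B_{X^{**}}$ holds for any $X$; what quasi-reflexivity buys is the decomposition $X^{**}=X\oplus E$ with $\dim E<\infty$. Writing $g=h+e$ with $h=P_Xg$ and $e=P_Eg$, finite-dimensionality forces $\lim_{\bar u,\bar v}\|e(\bar u)-e(\bar v)\|=0$ in the interlaced ultrafilter limit, so one may replace $g$ by $h$, which lands in $X$ itself; this is precisely what makes $\bar\rho_X$ (rather than $\bar\rho_{X^{**}}$) applicable via Proposition~\ref{PropLancienRaja}.
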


Before proving Theorem \ref{TheoremLancienRajaGEN}, let us introduce a tool which will be of great help in its proof. First notice that, since  $\bar \rho_X$ is convex, by Fekete's lemma, the limit $\theta\coloneqq  \lim_{t\to\infty}\bar \rho_X(t)/t$ exists and it is easy to see that $\theta>0$. Define a sequence of  norms $(N_k)_k$ by induction as follows. Let $N_2$ be the map on $\R^2$ given by 
\[N_2(\xi_1,\xi_2)=\left\{\begin{array}{l l}
|\xi_1|\bar\rho_X\Big(\frac{|\xi_2|}{|\xi_1|}\Big)+|\xi_1|, &\xi_1\neq 0,\\
\theta|\xi_2|, &\xi_1=0.
\end{array}\right.\]
 Suppose $k\geq 3$ and that $N_{k-1}:\R^{k-1}\to \R$ has already been defined. Define $N_k:\R^k\to \R$ by letting 
\[N_k(\xi_1,\ldots,\xi_k)=N_2(N_{k-1}(\xi_1,\ldots,\xi_{k-1}),\xi_k),\ \ \text{for all}\ \ \xi_1,\ldots,\xi_k\in\R.\]
Denote the usual norm on $\R$ by $N_1$, i.e., $N_1(\xi)=|\xi|$, for all $\xi\in\R$. 

\begin{proof}[Proof of Theorem \ref{TheoremLancienRajaGEN}]
By Lemma 4.3 of \cite{Kalton2013AsymptoticStructure},  $N_k(\bar{a})\leq e\|\bar{a}\|_{\bar\rho_X}$, for all $\bar{a}\in \R^k$. Hence,  it is enough to show that, under the conditions above, \[\lim_{\bar{u}_1,\bar{v}_1,\cV_1}\ldots \lim_{\bar{u}_\ell,\bar{v}_\ell,\cV_\ell}\|f(\bar{u})-f(\bar{v})\|\leq 2\Lip(f)N_k(\bar{a}).\]
We  prove this by induction on $k$. If $k=1$, then $\ell=1$ and $k_1=1$. So, the result follows immediately since $\|f(u)-f(v)\|\leq \Lip(f)|a_1|$, for all $u,v\in I_1$. Assume the result holds for $k-1$ and let us show it holds for $k$. Fix $k_1,\ldots,k_\ell\in \N$ with $\sum_{i=1}^\ell k_i=k$. Let $I_1,\ldots, I_\ell$ be directed sets with infinite tail, $\bar{a}=(a_1,\ldots,a_{k-1})$ be a $(k-1)$-tuple of non-zero reals,  $a_k\in \R\setminus \{0\}$ and  $f:[I_1]^{k_1}\times\ldots\times [I_\ell]^{k_\ell}\to X^{**}$ be a Lipschitz map. For each $i\in \{1,\ldots,\ell\}$, fix a cofinal ultrafilter $\cV_i$ on $I_i$.

The proof splits in two cases, (i) $k_\ell=1$ and (ii) $k_\ell>1$. Since the proof of both cases are completely analogous, we only show (ii). Assume $k_\ell>1$ and define 
\[g:[I_1]^{k_1}\times\ldots\times [I_{\ell}]^{k_{\ell}-1}\to X^{**}\] by letting 
\[g(\bar{u})=\wslim_{u_k,\cV_\ell}f(\bar{u},u_k),\]
for all $\bar{u}\in [I_1]^{k_1}\times\ldots\times [I_{\ell}]^{k_{\ell}-1}$ (notice that $g$ is well defined since $I_\ell$ has infinite tail and $\cV_\ell$ is cofinal). By weak$^*$ lower semi-continuity of the norm of $X^{**}$, it follows that $\Lip(g)\leq \Lip(f)$. For each $\bar{u},\bar{v}\in [I_1]^{k_1}\times\ldots\times [I_{\ell}]^{k_{\ell}-1}$ and each $s,t\in I_\ell$ with $t\succ \bar{u}$ and $s\succ \bar{v}$, define 
\[u_{\bar{u},\bar{v},t,s}=(f(\bar{u},t)-g(\bar{u}))-(f(\bar{v},s)-g(\bar{v})).\]
By weak$^*$ lower semi-continuity of the norm of $X^{**}$, $\|u_{\bar{u},\bar{v},t,s}\|\leq 2\Lip(f)|a_k|$. Also, it is clear that $\wslim_{t,s,\cV_\ell}u_{\bar{u},\bar{v},t,s}=0$.

Since $X$ is quasi-reflexive, write $X^{**}=X\oplus E$, where $E$ is finite dimensional. Let $P_X:X^{**}\to X$ and $P_E:X^{**}\to E$ be the projections on $X$ and $E$, respectively. Define $h=P_X\circ g$ and $e=P_E\circ g$, so   $g(\bar{u})=h(\bar{u})+e(\bar{u})$, for all $\bar{u}\in [I_1]^{k_1}\times\ldots\times [I_{\ell}]^{k_{\ell}-1}$. Since $E$ is finite dimensional and $e$ is bounded, it follows that 
\[\lim_{\bar{u}_1,\cV_1} \ldots\lim_{\bar{u}_{\ell},\cV_{\ell}} e(\bar{u}_1,\ldots,\bar{u}_\ell)\]
exists. Therefore, since 
\[\lim_{\bar{u}_1,\bar{v}_1,\cV_1}\ldots \lim_{\bar{u}_{\ell},\bar{v}_{\ell},\cV_{\ell}}e(\bar{u})=\lim_{\bar{u}_1,\cV_1} \ldots\lim_{\bar{u}_{\ell},\cV_{\ell}} e(\bar{u})=\lim_{\bar{u}_1,\bar{v}_1,\cV_1}\ldots \lim_{\bar{u}_{\ell},\bar{v}_{\ell},\cV_{\ell}}e(\bar{v}),\]
this implies that
\begin{equation}\label{EqMapE}
\lim_{\bar{u}_1,\bar{v}_1,\cV_1}\ldots \lim_{\bar{u}_{\ell},\bar{v}_{\ell},\cV_{\ell}}\|e(\bar{u})-e(\bar{v})\|=0.
\end{equation}
The induction hypothesis applied to $g$ implies that 
\[\lim_{\bar{u}_1,\bar{v}_1,\cV_1}\ldots \lim_{\bar{u}_{\ell},\bar{v}_{\ell},\cV_{\ell}}\|g(\bar{u})-g(\bar{v})\|\leq 2\Lip(f)N_{k-1}(\bar{a}).\]
Hence, by \eqref{EqMapE}, it follows that  
\begin{equation}\label{EqMapH}
\lim_{\bar{u}_1,\bar{v}_1,\cV_1}\ldots \lim_{\bar{u}_{\ell},\bar{v}_{\ell},\cV_{\ell}}\|h(\bar{u})-h(\bar{v})\|\leq 2\Lip(f)N_{k-1}(\bar{a}).
\end{equation}

Fix $\bar{u},\bar{v}\in  [I_1]^{k_1}\times\ldots\times [I_{\ell}]^{k_{\ell}-1}$. If $h(\bar{u})\neq h(\bar{v})$, Proposition \ref{PropLancienRaja} implies that
\begin{align*}
\lim_{t,s,\cV_\ell}\|h(\bar{u})-h(\bar{v})+u_{\bar{u},\bar{v},t,s}\|&\leq\|h(\bar{u})-h(\bar{v})\|\Big(1+\bar\rho_X\Big(\frac{2\Lip(f)|a_k|}{\|h(\bar{u})-h(\bar{v})\|}\Big)\Big) \\
&=N_2(\|h(\bar{u})-h(\bar{v})\|,2\Lip(f)|a_k|).
\end{align*}
If  $h(\bar{u})=h(\bar{v})$, the inequality above trivially holds. Therefore, by \eqref{EqMapH}, it follows that
\begin{align*}
\lim_{\bar{u}_1,\bar{v}_1,\cV_1}\ldots \lim_{\bar{u}_{\ell},\bar{v}_{\ell},\cV_{\ell}}\lim_{t,s,\cV_\ell}\|h(\bar{u})-h(\bar{v})+u_{\bar{u},\bar{v},t,s}\|
&\leq 2\Lip(f)N_2(N_{k-1}(\bar{a}),a_k)\\
&=2\Lip(f)N_{k}(\bar{a},a_k).
\end{align*}
Since 
\[f(\bar{u},t)-f(\bar{v},s)=h(\bar{u})-h(\bar{v})+u_{\bar{u},\bar{v},t,s}+e(\bar{u})-e(\bar{v}),\]
this finishes the proof.
\end{proof}

 The following is a trivial consequence of Remark \ref{RamarkRhop} and  Theorem \ref{TheoremLancienRajaGEN}.

\begin{cor}\label{CorLancienRajaGEN}
Let $p\in (1,\infty)$. Every quasi-reflexive $p$-AUSable Banach space  has $\KR(p)$.\qed
\end{cor}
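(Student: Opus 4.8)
The plan is to deduce Corollary~\ref{CorLancienRajaGEN} directly from Theorem~\ref{TheoremLancienRajaGEN} together with Remark~\ref{RamarkRhop}, after first reducing to the case of an actual $p$-AUS norm. Since $\KR(p)$ is an isomorphic invariant (it is defined purely in terms of Lipschitz maps into the metric space $(X,d)$, and the affine bounds in Definition~\ref{DefiKRp} absorb any bi-Lipschitz distortion of the norm — indeed Proposition~\ref{PropKRpCoKRpProperties}(ii) says $\KR(p)$ is stable even under coarse Lipschitz embeddings, so in particular under renormings), it suffices to assume $X$ is quasi-reflexive and carries a norm with $\overline{\rho}_X(t)\le Ct^p$ for all $t\ge 0$ and some $C>0$.

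Now fix $\ell\in\N$, directed sets $I_1,\dots,I_\ell$ with infinite tail, $k\in\N$, a Lipschitz map $f\colon([I_1]^k\times\dots\times[I_\ell]^k,d_{\mathrm H})\to X\subset X^{**}$, and cofinal ultrafilters $\cV_i$ on $I_i$. Apply Theorem~\ref{TheoremLancienRajaGEN} with $k_1=\dots=k_\ell=k$ (so the total is $\ell k$) and with $\bar a$ the $(\ell k)$-tuple all of whose coordinates equal $1$; for this choice $d_{\bar a}=d_{\mathrm H}$. The theorem yields
\[
\lim_{\bar u_1,\bar v_1,\cV_1}\dots\lim_{\bar u_\ell,\bar v_\ell,\cV_\ell}\|f(\bar u)-f(\bar v)\|\le 2e\,\Lip(f)\,\|\bar 1\|_{\overline{\rho}_X},
\]
where $\bar 1\in\R^{\ell k}$. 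By Remark~\ref{RamarkRhop}, $\|\bar 1\|_{\overline{\rho}_X}\le C^{1/p}(\ell k)^{1/p}=C^{1/p}\ell^{1/p}k^{1/p}$. Hence the right-hand side is at most $2eC^{1/p}\ell^{1/p}k^{1/p}\,\Lip(f)$. Setting $C'=\max\{1,\,2eC^{1/p}\}$ we get the bound $(C'\Lip(f)+C')\ell^{1/p}k^{1/p}$ (in fact even without the additive $C'$ term), which is exactly the inequality required in Definition~\ref{DefiKRp}(i). Since $\ell$ and the $I_i$ were arbitrary, $X$ has $\KR(p,C')$, and therefore $\KR(p)$.

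Honestly there is no real obstacle here: the content has already been extracted into Theorem~\ref{TheoremLancienRajaGEN} and Remark~\ref{RamarkRhop}, and the corollary is just a matter of specializing the tuple $\bar a$ to $\bar 1$, matching $d_{\bar a}$ with $d_{\mathrm H}$, plugging in the Orlicz-norm estimate, and checking that the resulting constant can be massaged into the affine form $(C\Lip(f)+C)$ demanded by the definition. The only points worth a sentence each are (a) the reduction from $p$-AUSable to $p$-AUS via renorming-invariance of $\KR(p)$, and (b) noting that $f$ taking values in $X$ rather than $X^{**}$ is harmless since $X\hookrightarrow X^{**}$ isometrically, so Theorem~\ref{TheoremLancienRajaGEN} applies verbatim. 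This is why the paper states it as a one-line ``trivial consequence.''
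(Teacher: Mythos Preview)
Your proof is correct and is exactly the argument the paper has in mind: the paper records this corollary as a ``trivial consequence of Remark~\ref{RamarkRhop} and Theorem~\ref{TheoremLancienRajaGEN}'' without further elaboration, and you have simply unpacked that one line, including the (appropriate) observations that renorming does not affect $\KR(p)$ and that $X\hookrightarrow X^{**}$ isometrically.
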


\subsection{Properties $\KR(p)$ and $\coKR(p)$, and the duals of Banach spaces}

The rest of the section is dedicated to finding sufficient conditions for a Banach space to have $\coKR(p)$. But first, we need some definitions. Let $X$ be a Banach space, $\ell\in \N$ and $I_1,\ldots,I_\ell$ be directed sets. Given $k\in\N$ and a family $(x_{\bar{u}})_{\bar{u}\in I_1\times\ldots\times I_\ell}$ in $X$,  define a map 
\[S\coloneqq S(\ell,k,(u_{\bar{u}})_{\bar{u}\in I_1\times\ldots\times I_\ell}): ( [I_1]^k\times\ldots\times [I_\ell]^k,d_{\mathrm{H}})\to X\] 
by letting
\[
S(u_1,\ldots,u_{\ell k} ) = \sum_{j=1}^k x_{u_j,u_{k+j},u_{2k+j},\ldots, u_{(\ell-1) k+j}},
\]
for all $\bar{u} \in[I_1]^k\times\ldots\times [I_\ell]^k$. Whenever there is no chance of confusion, we omit the index set of $\bar{u}$ and simply write $S\coloneqq S(\ell,k,(x_{\bar{u}})_{\bar{u}})$. Notice that, since we consider  $ [I_1]^k\times\ldots\times [I_\ell]^k$ endowed with the metric $d_{\mathrm{H}}$,  
\[\Lip(S)\leq 2 \cdot \sup_{\bar{u} \in[I_1]^k\times\ldots\times [I_\ell]^k}\|x_{\bar{u}}\|.\]

Given a Banach space $X$,  let $I_{X^*}$ denote a system of weak$^*$ open neighborhoods  of $0\in X^*$, i.e., we do not necessarily fix a particular system but simply state that $I_{X^*}$ is a \emph{fixed} system. We make $I_{X^*}$ into a directed set by ordering it with the  reverse inclusion order, i.e., $u_1\preceq u_2$ if and only if $u_2\subset u_1$, for all $u_1,u_2\in I_{X^*}$. Notice that, for all $x^*\in X^*$, 
\[\{\{x^*\}+u\mid u\in I_{X^*}\}\]
is a system of weak$^*$ open neighborhoods of $x^*$. Therefore, by Goldstine theorem, given $x^*\in B_{X^*}$, there exists a family $(x_u)_{u\in I_{X^*}}$ in $B_X$ so that $x_u\in \{x^*\}+u$, for all $u\in I_{X^*}$. In particular, $\wslim_{u\in I_{X^*}}x_u=x^*$. If $X$ is separable, we can pick $I_{X^*}$ to be countable and order isomorphic to $(\N,\leq)$. Therefore, in this case, we make the  identification  $I_{X^*}=\N$.

\begin{lemma}\label{LemmaFamilyBidualGENERAL}
Let  $X$ be a Banach space and let $I= I_{X^{**}}$. Let $\ell\in\N$ and let $I_1,\ldots,I_\ell$ be directed sets. Let $C>0$ and let $(x^ {**}_{\bar{u}})_{\bar{u}\in I_1\times\ldots\times I_\ell}$ be a family in $C\cdot B_{X^{**}}$. There exists a  family $(x_{\bar{u}})_{\bar{u}\in I_1\times\ldots\times I_\ell\times I}$ in $C\cdot B_X$ with the following property: for all $\eps>0$, all $k\in\N$, and all $\bar{u},\bar{v}\in [I_1]^k\times\ldots\times [I_\ell]^k$, there exists $i\in I$ such that,  letting  $f=S(\ell+1,k,(x_{\bar{u}})_{\bar{u}})$ and $F=S(\ell,k,(x^{**}_{\bar{u}})_{\bar{u}})$, it follows that 
\[\|F(\bar{u})-F(\bar{v})\|-\eps\leq \|f(\bar{u},\bar{v}')-f(\bar{u},\bar{v}')\|,\]
for all  $\bar{u}',\bar{v}'\in[I]^{k}$ with $\bar{u}',\bar{v}'\succ i$.
\end{lemma}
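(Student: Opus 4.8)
The plan is to construct the family $(x_{\bar u})_{\bar u\in I_1\times\cdots\times I_\ell\times I}$ by applying Goldstine's theorem separately to each of the bidual vectors $x^{**}_{\bar u}$, with $\bar u$ ranging over $I_1\times\cdots\times I_\ell$. Concretely, for each such $\bar u$, since $x^{**}_{\bar u}\in C\cdot B_{X^{**}}$, the discussion immediately preceding the lemma (using that $I=I_{X^{**}}$ is a fixed system of weak$^*$ neighborhoods of $0$ in $X^{***}$, ordered by reverse inclusion) produces a net $(x_{\bar u,i})_{i\in I}$ in $C\cdot B_X$ with $x_{\bar u,i}\in\{x^{**}_{\bar u}\}+i$ for all $i\in I$, so that $\wslim_{i\in I}x_{\bar u,i}=x^{**}_{\bar u}$ in the weak$^*$ topology of $X^{**}$ (viewing $X\subset X^{**}$). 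Setting $x_{(\bar u,i)}:=x_{\bar u,i}$ defines the desired family, which automatically lies in $C\cdot B_X$.

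Next I would unwind the definitions of $f=S(\ell+1,k,(x_{\bar u})_{\bar u})$ and $F=S(\ell,k,(x^{**}_{\bar u})_{\bar u})$. By the formula for $S$, for $\bar u\in[I_1]^k\times\cdots\times[I_\ell]^k$ and $\bar u'\in[I]^k$ we have $f(\bar u,\bar u')=\sum_{j=1}^k x_{(u_j,u_{k+j},\ldots,u_{(\ell-1)k+j},\,u'_j)} = \sum_{j=1}^k x_{\bar u^{(j)},\,u'_j}$, where $\bar u^{(j)}$ denotes the $j$-th ``column'' $(u_j,u_{k+j},\ldots,u_{(\ell-1)k+j})\in I_1\times\cdots\times I_\ell$; similarly $F(\bar u)=\sum_{j=1}^k x^{**}_{\bar u^{(j)}}$. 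As each $u'_j$ runs along the ultrafilter/cofinal direction of $I$, weak$^*$ convergence $x_{\bar u^{(j)},u'_j}\to x^{**}_{\bar u^{(j)}}$ gives, coordinate by coordinate in $j$, that $f(\bar u,\bar u')\to F(\bar u)$ weak$^*$ in $X^{**}$; likewise $f(\bar v,\bar v')\to F(\bar v)$. Hence $f(\bar u,\bar u')-f(\bar v,\bar v')\to F(\bar u)-F(\bar v)$ weak$^*$, and by weak$^*$ lower semicontinuity of the norm on $X^{**}$,
\[
\|F(\bar u)-F(\bar v)\|\le\liminf \|f(\bar u,\bar u')-f(\bar v,\bar v')\|,
\]
the $\liminf$ being along the appropriate directed/cofinal cofiltration of $[I]^k\times[I]^k$. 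Finally, fixing $\eps>0$ and $k\in\N$ and $\bar u,\bar v\in[I_1]^k\times\cdots\times[I_\ell]^k$, the definition of $\liminf$ along a directed order yields an index $i\in I$ such that $\|f(\bar u,\bar u')-f(\bar v,\bar v')\|\ge\|F(\bar u)-F(\bar v)\|-\eps$ for all $\bar u',\bar v'\in[I]^k$ with $\bar u',\bar v'\succ i$, which is exactly the stated conclusion (the displayed inequality in the lemma with ``$\bar v'$'' appearing twice is a typo for $\|f(\bar u,\bar u')-f(\bar v,\bar v')\|$).

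The only genuinely delicate point is bookkeeping: I must check that a \emph{single} neighborhood index $i\in I$ works simultaneously for all $k$ columns and for both $\bar u'$ and $\bar v'$ once they are past $i$. This follows because $I$ is directed and because passing to a weak$^*$ neighborhood contained in $i$ only improves the approximation $x_{\bar u^{(j)},u'_j}\approx x^{**}_{\bar u^{(j)}}$; one chooses $i$ fine enough that each of the $2k$ summands is within $\eps/(2k)$ of its limit in a weak$^*$ sense adapted to a norming functional, then sums. I expect this monotonicity-in-$i$ argument, rather than any hard analysis, to be the main thing to get right; everything else is routine weak$^*$-topology manipulation already used in the proof of Theorem~\ref{TheoremLancienRajaGEN}.
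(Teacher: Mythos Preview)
Your approach is essentially identical to the paper's: apply Goldstine's theorem to each $x^{**}_{\bar u}$ to obtain the nets $(x_{\bar u,i})_{i\in I}$ in $C\cdot B_X$, verify that $F(\bar u)=\wslim_{\bar u'} f(\bar u,\bar u')$, and then invoke weak$^*$ lower semicontinuity of the norm on $X^{**}$ to extract the index $i$. The paper's proof is considerably terser at the last step (it simply says ``using weak$^*$ lower semi-continuity of the norm of $X^{**}$, there exists $i\in I$ so that\ldots''), whereas you spell out the norming-functional argument; both are fine. One small slip: $I=I_{X^{**}}$ is a system of weak$^*$ open neighborhoods of $0$ in $X^{**}$ (the $\sigma(X^{**},X^*)$ topology), not in $X^{***}$; this does not affect your argument.
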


\begin{proof}
Without loss of generality, assume $(x^ {**}_{\bar{u}})_{\bar{u}\in I_1\times\ldots\times I_\ell}$ is in the unit ball $B_{X^{**}}$. For each $\bar{u}\in I_1\times\ldots\times I_\ell$,  Goldstine theorem gives a family  $(x_{\bar{u},v})_{v\in I}$ in $B_X$ so that $\wslim_{v\in I}x_{\bar{u},v}=x^{**}_{\bar{u}}$.   Let us observe that the  family $(x_{\bar{u}})_{\bar{u}\in I_1\times\ldots\times I_\ell\times I}$ has the required property. 
 
Fix $\eps>0$ and $k\in\N$. Define  $f=S(\ell+1,k,(x_{\bar{u}})_{\bar{u}})$ and $F=S(\ell,k,(x^{**}_{\bar{u}})_{\bar{u}})$, and notice that 
\[F(\bar{u})=\wslim_{\bar{u}'\in [I]^k} f(\bar{u},\bar{u}'),\]
for all $\bar{u}\in[I_1]^k\times\ldots\times [I_\ell]^k$. Fix  $\bar{u},\bar{v}\in [I_1]^k\times\ldots\times [I_\ell]^k$. Using weak$^*$ lower semi-continuity of the norm of $X^{**}$, there exists  $i\in I$ so that 
\[
 \|F(\bar{u})-F(\bar{v})\|-\eps \leq \|f(\bar{u},\bar{u}')-f(\bar{v},\bar{v}')\|
\]
for all $\bar{u}', \bar{v}'\in [I]^k$ with $\bar{u}', \bar{v}'\succ i$. \end{proof}

\begin{lemma}\label{LemmaCORSeqInHighdual}
Let $I$ be a directed set,  $X$ be a Banach space and $\ell\in \N$.  For each $j\in\{1,\ldots, \ell\}$ write $I_j=I_{X^{(2j)}}$. Let $C>0$ and let   $(z_u)_{u\in I}$ be a  family  in $C\cdot B_{X^{(2\ell)}}$.  There exists a  family $(x_{\bar{u}})_{\bar{u}\in I\times I_{1}\times\ldots\times I_\ell}$ in $C\cdot B_X$ with the following property: for all $\eps>0$, all $k\in\N$, and all $\bar{u},\bar{v}\in [I]^k$, letting $f=S(\ell+1,k,(x_{\bar{u}})_{\bar{u}})$ and $F=S(1,k,(z_u)_{u})$,
\[\begin{array}{c}
(\exists i_1\in I_1)(\forall \bar{u}'_1,\bar{v}'_1\in [I_1]^k \text{ with }\bar{u}'_1,\bar{v}'_1\succ i_1)\\
(\exists i_2\in I_2)(\forall \bar{u}'_2,\bar{v}'_2\in [I_2]^k \text{ with }\bar{u}'_2,\bar{v}'_2\succ i_2)\\
\vdots \\
(\exists i_\ell\in I_\ell)(\forall \bar{u}'_\ell,\bar{v}'_\ell\in[I_\ell]^k\text{ with }\bar{u}'_\ell,\bar{v}'_\ell\succ i_\ell)
\end{array} \]
it holds that
\[\|F(\bar{u})-F(\bar{v})\|-\eps\leq \|f(\bar{u},\bar{v}')-f(\bar{v},\bar{v}')\|.\]
\end{lemma}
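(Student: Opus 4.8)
The plan is to prove Lemma \ref{LemmaCORSeqInHighdual} by induction on $\ell$, using Lemma \ref{LemmaFamilyBidualGENERAL} as the engine at each step and exploiting the identification $X^{(2j)} = (X^{(2j-2)})^{**}$ (with $X^{(0)} = X$). The nested quantifier block in the conclusion is exactly what one obtains by iterating the ``there exists $i \in I$'' conclusion of Lemma \ref{LemmaFamilyBidualGENERAL} $\ell$ times, peeling off one biduality at a time, starting from the outermost iterated dual $X^{(2\ell)}$ and descending to $X$.

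\textbf{Base case.} For $\ell = 1$, we have $I_1 = I_{X^{**}}$ and a family $(z_u)_{u \in I}$ in $C \cdot B_{X^{**}}$. Apply Lemma \ref{LemmaFamilyBidualGENERAL} with the single directed set $I$ in the role of $I_1$ (so the lemma's $\ell$ equals $1$), to the family $(x^{**}_u)_{u \in I} = (z_u)_{u \in I}$. This produces a family $(x_{u,v})_{(u,v) \in I \times I_{X^{**}}}$ in $C \cdot B_X$ such that, for all $\eps > 0$, all $k \in \N$, and all $\bar u, \bar v \in [I]^k$, there is $i_1 \in I_{X^{**}}$ with $\|F(\bar u) - F(\bar v)\| - \eps \leq \|f(\bar u, \bar v'_1) - f(\bar v, \bar v'_1)\|$ for all $\bar u'_1, \bar v'_1 \in [I_1]^k$ with $\bar u'_1, \bar v'_1 \succ i_1$, where $f = S(2, k, (x_{\bar u})_{\bar u})$ and $F = S(1, k, (z_u)_u)$. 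This is precisely the claimed statement for $\ell = 1$. (Here I should double-check that the form of $f$ in Lemma \ref{LemmaFamilyBidualGENERAL} matches the $S(\ell+1, k, \cdot)$ normalization; the indexing of $S$ is set up exactly so that $S(2,k,\cdot)$ on $I \times I_1$ is what appears.)

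\textbf{Inductive step.} Suppose the statement holds for $\ell - 1$. Given a family $(z_u)_{u \in I}$ in $C \cdot B_{X^{(2\ell)}}$, view $X^{(2\ell)} = (X^{(2\ell - 2)})^{**}$ and apply the induction hypothesis with the Banach space $X^{(2\ell-2)}$ in place of $X$: this space's ``$X^{**}, X^{(4)}, \ldots, X^{(2(\ell-1))}$'' are $X^{(2\ell)}, \ldots$ — wait, I need to set this up in the other direction. The cleaner route: apply Lemma \ref{LemmaFamilyBidualGENERAL} first, with the $\ell$-fold product $I \times I_1 \times \cdots \times I_{\ell-1}$ in the role of its directed sets and $X^{(2\ell - 2)}$ in the role of its $X$, to the family $(z_u)_u \subset C \cdot B_{(X^{(2\ell-2)})^{**}}$; actually the family must be indexed by the product, so I first use the induction hypothesis to descend from $X^{(2\ell)}$ down to $X^{(2)} = X^{**}$, obtaining a family $(y_{\bar u})_{\bar u \in I \times I_1 \times \cdots \times I_{\ell - 1}}$ in $C \cdot B_{X^{**}}$ and the nested-quantifier inequality relating $S(1,k,(z_u)_u)$ to $S(\ell, k, (y_{\bar u})_{\bar u})$ evaluated on tails; then one final application of Lemma \ref{LemmaFamilyBidualGENERAL} (with the $\ell$-fold product as its directed sets, $X$ as its $X$, and $(y_{\bar u})$ as its $(x^{**}_{\bar u})$) produces $(x_{\bar u})_{\bar u \in I \times I_1 \times \cdots \times I_\ell}$ in $C \cdot B_X$ with the extra layer of quantifier $(\exists i_\ell)(\forall \bar u'_\ell, \bar v'_\ell \succ i_\ell)$, and chaining the two inequalities (absorbing $2\eps$ into a relabeled $\eps$, and checking that $F(\bar u) = \mathrm{w}^*\text{-}\lim f(\bar u, \bar u')$ is compatible across both steps) gives the result.

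\textbf{Main obstacle.} The genuinely delicate point is bookkeeping the order in which the quantifiers and the iterated weak$^*$ limits interleave: each application of Lemma \ref{LemmaFamilyBidualGENERAL} fixes $\bar u, \bar v$ first and only then produces the index $i_j$, so the $i_j$'s are chosen in a strict outer-to-inner order (which is why the conclusion is a nested block of alternating $\exists/\forall$ rather than a single existential). One must verify that after descending from $X^{(2\ell)}$ to $X^{**}$ via the induction hypothesis, the ``outer'' data $\bar u, \bar v \in [I]^k$ used when invoking Lemma \ref{LemmaFamilyBidualGENERAL} the final time are the \emph{same} $\bar u, \bar v$, and that the auxiliary tuples $\bar u'_j, \bar v'_j$ in $[I_j]^k$ ($j < \ell$) that the induction hypothesis quantifies over can be harmlessly carried along as ``frozen'' coordinates in the last step — i.e., that plugging in tail tuples for the $I_j$-coordinates in the final $f = S(\ell+1, k, (x_{\bar u})_{\bar u})$ recovers, after a weak$^*$ limit in the last coordinate, the map $S(\ell, k, (y_{\bar u})_{\bar u})$ on those same tails. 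This is a routine-but-careful compatibility check on the definition of $S$ and on weak$^*$ lower semicontinuity of the norm; everything else is a direct transcription of the two lemmas.
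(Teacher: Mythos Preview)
Your proposal is correct and follows essentially the same approach as the paper: induction on $\ell$, with the base case an immediate application of Lemma \ref{LemmaFamilyBidualGENERAL} and the inductive step obtained by applying the induction hypothesis with $X^{**}$ in place of $X$ (yielding a family in $C\cdot B_{X^{**}}$ indexed over $I\times I_1\times\cdots\times I_{\ell-1}$) followed by one final application of Lemma \ref{LemmaFamilyBidualGENERAL} to descend to $X$. The paper's own proof is a two-sentence sketch of exactly this, and the bookkeeping obstacle you flag (the order in which the nested $\exists/\forall$ block is built and how the ``frozen'' auxiliary tuples pass through the final application of Lemma \ref{LemmaFamilyBidualGENERAL}) is precisely the routine check the paper suppresses.
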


\begin{proof}
We proceed by induction on $\ell$. If $\ell=1$, this is an immediate consequence of Lemma \ref{LemmaFamilyBidualGENERAL} with $\ell=1$ in its statement. Assume the result holds for $\ell-1$. and let us show it holds for $\ell$. Let $(z_u)_{u\in I}$ be a bounded sequence in $X^{(2\ell)}$. Let  $(x^{**}_{\bar{u}})_{\bar{u}\in I\times I_1\times \ldots\times I_{\ell-1}} $ be the bounded family in $X^{**}$ given my the inductive hypothesis applied to $\ell-1$, the Banach space $X^{**}$ and the family $(z_u)_{u\in I}$.  The result follows by a straightforward application of Lemma \ref{LemmaFamilyBidualGENERAL} to the bounded family $(x^{**}_{\bar{u}})_{\bar{u}\in I\times I_1\times \ldots\times I_{\ell-1}}$. 
\end{proof}

The next result will allow us to obtain applications to coarse Lipschitz embeddings between Banach spaces. In order to simplify notation, we introduce one more piece of terminology. Let $\M\subset \N$ be infinite and $k\in\N$. Define 
\[ \cI_k(\M)=\{(\bar{n},\bar{m})\in [\M]^k\times [\M]^k\mid n_1<m_1<\ldots<n_k<m_k\}.\]

\begin{thm}\label{ThmKRpImpliesIteratedDualspBanSaks}
Let $p\in (1,\infty]$. If a Banach space $X$ has $\KR(p)$, then $X^{(2\ell)}$ has the alternating $p$-Banach-Saks property for all $\ell\in\N$.  
\end{thm}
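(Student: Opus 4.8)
\textbf{Proof plan for Theorem \ref{ThmKRpImpliesIteratedDualspBanSaks}.}

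The plan is to argue by contraposition: suppose that for some $\ell\in\N$ the iterated dual $X^{(2\ell)}$ fails the alternating $p$-Banach-Saks property; then for every $C>0$ there is a sequence $(z_u)_{u}$ in $B_{X^{(2\ell)}}$ and some $k\in\N$ witnessing the failure, i.e., $\big\|\sum_{j=1}^k(-1)^j z_{n_j}\big\|>Ck^{1/p}$ for all $n_1<\dots<n_k$ in every infinite subset of the index set. Since failure of a Banach-Saks-type inequality for one constant $C$ already gives failure for that $C$, the right quantitative formulation to use is: for each $C$ there exist $k_C$ and a bad sequence. I would first reduce to a countable directed set by taking $I=\N$ as index set for the $z_u$'s (a sequence suffices to witness failure of alternating $p$-Banach-Saks), and apply Lemma \ref{LemmaCORSeqInHighdual} with this $(z_u)_{u\in\N}$ in $C\cdot B_{X^{(2\ell)}}$ to pull the behavior of $F=S(1,k,(z_u)_u)$ down to a family $(x_{\bar u})_{\bar u}$ in $C\cdot B_X$, giving a Lipschitz map $f=S(\ell+1,k,(x_{\bar u})_u)$ on $[\N]^k\times[I_1]^k\times\dots\times[I_\ell]^k$ with $\Lip(f)\le 2C$ (with respect to $d_{\mathrm H}$).

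Next I would feed this $f$ into property $\KR(p)$. By assumption $X$ has $\KR(p)$, so fix $C_0\ge 1$ with $X\in\KR(p,C_0)$; in particular $X$ has $\KR(p,C_0,\ell+1,(\N,I_1,\dots,I_\ell))$. Applied to $f$ and to cofinal ultrafilters $\cV$ on $\N$ and $\cV_j$ on $I_j=I_{X^{(2j)}}$, this yields
\[
\lim_{\bar u_0,\bar v_0,\cV}\lim_{\bar u_1,\bar v_1,\cV_1}\cdots\lim_{\bar u_\ell,\bar v_\ell,\cV_\ell}\|f(\bar u)-f(\bar v)\|\le (C_0\Lip(f)+C_0)(\ell+1)^{1/p}k^{1/p}\le C_0(2C+1)(\ell+1)^{1/p}k^{1/p}.
\]
On the other hand, the conclusion of Lemma \ref{LemmaCORSeqInHighdual} says that for every $\eps>0$, every $k$, and every $\bar u,\bar v\in[\N]^k$, after passing successively to tails in $I_1,\dots,I_\ell$ (which is exactly what the iterated ultrafilter limits along the $\cV_j$ do, since each $\cV_j$ is cofinal and each $I_j$ has infinite tail) one has $\|F(\bar u)-F(\bar v)\|-\eps\le\|f(\bar u,\bar v')-f(\bar v,\bar v')\|$. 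Choosing $\bar u,\bar v$ to be interlaced, $F(\bar u)-F(\bar v)$ is (a rearrangement corresponding to) an alternating sum $\sum_{j=1}^k(-1)^j z_{p_j}$ of $k$ terms of the bad sequence over an infinite subset of $\N$, so by the failure hypothesis its norm exceeds $Ck^{1/p}$ (here I must be careful to pick the interlacing pattern so the difference of the two $S$-sums telescopes into a genuine $\pm$-alternating sum of $2k$ or $k$ distinct $z$'s — this is the same device used in passing from \eqref{EqLancienRaja} to the alternating $p$-Banach-Saks property). Taking $\eps<k^{1/p}$ and then the same iterated ultrafilter limits, the left-hand quantity is $\ge (C-1)k^{1/p}$.

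Combining, $(C-1)k^{1/p}\le C_0(2C+1)(\ell+1)^{1/p}k^{1/p}$, i.e. $C-1\le C_0(2C+1)(\ell+1)^{1/p}$, which is false once $C$ is chosen large enough (depending only on $C_0$ and $\ell$). This contradiction shows $X^{(2\ell)}$ must have the alternating $p$-Banach-Saks property. The main obstacle I anticipate is bookkeeping rather than conceptual: getting the index arithmetic in the $S(\ell+1,k,\cdot)$ maps to line up so that the difference $f(\bar u,\bar v')-f(\bar v,\bar v')$ with $\bar u,\bar v$ interlaced really is an alternating sum of distinct high-dual vectors (rather than something with repeated or mismatched indices), and checking that the order of the iterated $\lim_{\bar u_j,\bar v_j,\cV_j}$ limits matches the order $(\exists i_1)(\forall\dots)(\exists i_2)(\forall\dots)$ in Lemma \ref{LemmaCORSeqInHighdual} — a cofinal ultrafilter limit over a directed set with infinite tail eventually enters every tail $\mathrm{Succ}_\succ(i_j)$, so the quantifier alternation in the Lemma is precisely absorbed by these limits, but this equivalence needs to be stated carefully. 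A minor additional point is to note that one may take the bad sequence to be weak$^*$ null in $X^{(2\ell)}$ after passing to a subsequence (since $B_{X^{(2\ell)}}$ is weak$^*$ sequentially compact when the relevant predual is separable — or, in the general case, simply work with the net structure already built into Lemma \ref{LemmaCORSeqInHighdual}, which does not need weak$^*$ nullity, only boundedness).
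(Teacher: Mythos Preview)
Your approach is essentially the paper's own, recast contrapositively: both arguments feed a sequence in $B_{X^{(2\ell)}}$ through Lemma~\ref{LemmaCORSeqInHighdual} to build the map $f=S(\ell+1,k,(x_{\bar u})_{\bar u})$, then play the $\KR(p)$ upper bound off against the lower bound coming from $F$. The paper does this directly (for an arbitrary sequence and arbitrary $k$, producing the alternating-sum bound and then invoking Ramsey), while you assume failure and seek a contradiction. The technical core---that the $\exists i_j\,\forall\,\bar u_j,\bar v_j\succ i_j$ quantifier pattern in Lemma~\ref{LemmaCORSeqInHighdual} is absorbed by the cofinal iterated ultrafilter limits---is handled in the paper by an explicit induction on $j$, which is exactly the careful statement you say is needed.

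There is, however, a genuine arithmetical slip that breaks your contradiction as written. You use the same letter $C$ for two unrelated quantities: the large constant witnessing failure of alternating $p$-Banach-Saks, and the norm-bound constant in Lemma~\ref{LemmaCORSeqInHighdual}. The bad sequence $(z_u)_u$ lives in $B_{X^{(2\ell)}}$, so in the lemma one takes $C=1$; hence the family $(x_{\bar u})_{\bar u}$ lies in $B_X$ and $\Lip(f)\le 2$, not $\Lip(f)\le 2C$. With your bound $\Lip(f)\le 2C$, the final inequality
\[
C-1\le C_0(2C+1)(\ell+1)^{1/p}
\]
has right-hand side growing like $2C_0(\ell+1)^{1/p}\,C\ge 2C$, so it holds for all $C$ and no contradiction arises. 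With the correct $\Lip(f)\le 2$, the $\KR(p,C_0)$ bound becomes $3C_0(\ell+1)^{1/p}k^{1/p}$, and the comparison $C-1\le 3C_0(\ell+1)^{1/p}$ does fail for $C$ large, as you want. Once this is fixed (and the $k$ versus $2k$ bookkeeping you already flagged is sorted---note $F(\bar u)-F(\bar v)$ for interlaced $\bar u,\bar v\in[\N]^k$ is an alternating sum of $2k$ terms, so you should start from a bad sequence for the index $2k$, or simply argue directly as the paper does and avoid the mismatch entirely), the proof goes through.
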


\begin{proof}
Fix $C>0$ such that $X$ has $\KR(p,C)$. Fix $\ell\in \N$ and let $(z_n)_{n\in\N}$ be a sequence in the unit ball of $X^{(2\ell)}$. For each $s\in\{1,\ldots, \ell\}$, let $I_s=I_{X^{(2s)}}$. Fix an infinite  $\M\subset \N$  and let $(x_{\bar{u}})_{\bar{u}\in \M\times I_{1}\times\ldots\times I_\ell}$ be the family in $B_X$ given by Lemma \ref{LemmaCORSeqInHighdual} applied to $(z_n)_{n\in\M}$. Fix $k\in\N$, and let  $f=S(\ell+1,k,(x_{\bar{n}})_{\bar{n}})$ and $F=S(1,k,(z_{n})_{n\in\M})$. Endowing $[\M]^{k}\times [I_1]^k\times\ldots\times [I_\ell]^k$ with the metric $d_{\mathrm{H}}$, we have    $\Lip(f)\leq 2$.

 Let $\cU$ be a cofinal ultrafilter on $\N$ and for each $i\in\{1,\ldots,\ell\}$ let $\cV_i$ be a cofinal ultrafilter on $I_i$. Since $X$ has $\KR(p,C)$, 
it follows that
\[\lim_{\bar{n},\bar{m},\cU}\lim_{\bar{u}_1,\bar{v}_1,\cV_1}\ldots \lim_{\bar{u}_\ell,\bar{v}_\ell,\cV_\ell}\| f(\bar{n},\bar{u})- f(\bar{m},\bar{v})\|\leq (C\Lip(f)+C)(\ell+1)^{1/p}k^{1/p}.\]
In order to simplify notation, let $L=2(C\Lip(f)+C)(\ell+1)^{1/p}$.  Therefore, since $\cU$ is cofinal, there exists $(\bar{n},\bar{m})\in \cI_k(\M)$ so that 
\begin{equation}\label{EqThmBanSak}
\lim_{\bar{u}_1,\bar{v}_1,\cV_1}\ldots \lim_{\bar{u}_\ell,\bar{v}_\ell,\cV_\ell}\| f(\bar{n},\bar{u})- f(\bar{m},\bar{v})\|\leq Lk^{1/p}.
\end{equation}

We now pick $i_1\in I_1,\ldots,i_\ell\in I_\ell$ and $\bar{u}_1,\bar{v}_1\in [I_1]^k,\ldots,\bar{u}_\ell,\bar{v}_\ell\in [I_\ell]^k$ by induction as follows: let $j\in \{2,\ldots,\ell-1\}$ and assume that  $i_1\in I_1,\ldots,i_{j-1}\in I_{j-1}$ and $\bar{u}_1,\bar{v}_1\in [I_1]^k,\ldots,\bar{u}_{j-1},\bar{v}_{j-1}\in [I_{j-1}]^k$ had been chosen (the first step of the induction follows similarly). By our choice of $f$ (see  Lemma \ref{LemmaCORSeqInHighdual}), pick $i_{j}\in I_{j}$ so that
\[\begin{array}{c}
(\forall \bar{u}_{j},\bar{v}_{j}\in [I_{j}]^k \text{ with }\bar{u}_{j},\bar{v}_{j}\succ i_{j})\\
(\exists i_{j+1}\in I_{j+1})(\forall \bar{u}_{j+1},\bar{v}_{j+1}\in[I_{j+1}]^k\text{ with }\bar{u}_{j+1},\bar{v}_{j+1}\succ i_{j+1})\\
\vdots\\
(\exists i_\ell\in I_\ell)(\forall \bar{u}_\ell,\bar{v}_\ell\in[I_\ell]^k\text{ with }\bar{u}_\ell,\bar{v}_\ell\succ i_\ell)
\end{array} \]
it holds that
\[\|F(\bar{n})-F(\bar{m})\|-\frac{1}{2}\leq \|f(\bar{n},\bar{u})-f(\bar{m},\bar{v})\|.\]
Since $\cV_{j}$ is cofinal in $I_{j}$, by \eqref{EqThmBanSak}, there exists  $\bar{u}_{j},\bar{v}_{j}\in [I_{j}]^k$ with $\bar{u}_{j},\bar{v}_{j}\succ i_{j}$ so that 
\[\lim_{\bar{u}_{j+1},\bar{v}_{j+1},\cV_{j+1}}\ldots \lim_{\bar{u}_\ell,\bar{v}_\ell,\cV_\ell}\| f(\bar{n},\bar{u})-f(\bar{m},\bar{v})\|\leq Lk^{1/p}+\frac{j}{2\ell}.\]
This finishes the induction.

For now on, fix $\bar{u}=(\bar{u}_j)_{j=1}^\ell$ and $\bar{v}=(\bar{u}_j)_{j=1}^\ell$. By the construction of $\bar{u}$ and $\bar{v}$, it follows that 
\[\| f(\bar{n},\bar{u})-f(\bar{m},\bar{v})\|\leq Lk^{1/p}+\frac{1}{2}\]
and 
$\|F(\bar{n})-F(\bar{m})\|\leq \|f(\bar{n},\bar{u})-f(\bar{m},\bar{v})\| +1/2$. Therefore, 
\[\| F(\bar{n})-F(\bar{m})\|\leq Lk^{1/p}+1.\]
Since $\M$ is arbitrary, the argument above shows that, for all infinite $\M\subset \N$, there exists $(\bar{n},\bar{m})\in \cI_{k}(\M)$ so that $\| F(\bar{n})-F(\bar{m})\|\leq Lk^{1/p}+1$. By standard Ramsey theory, we can choose an infinite $\M\subset \N$ so that $\| F(\bar{n})-F(\bar{m})\|\leq Lk^{1/p}+1$, for all  $(\bar{n},\bar{m})\in \cI_{k}(\M)$. By the definition of $F$, this implies that
\[\|z_{n_1}-z_{n_2}+\ldots+z_{n_{2k-1}}-z_{n_{2k}}\|\leq L k^{1/p}+1,\]
for all $\bar{n}=(n_1,\ldots,n_{2k})\in [\M]^{2k}$. Since $L$ is independent of $k$, this gives us that $X^{(2\ell)}$ has the alternating $p$-Banach-Saks property.
\end{proof}

\begin{proof}[Proof of Theorem \ref{CorIteratedDualspBanSaks}]
This is an immediate consequence of Proposition \ref{PropKRpCoKRpProperties}(ii), Corollary \ref{CorLancienRajaGEN} and Theorem \ref{ThmKRpImpliesIteratedDualspBanSaks}.
\end{proof}

We finish this section with a method to establish whether a Banach space has $\coKR(p)$. 

\begin{thm}\label{ThmBadDualsImpliesCoKRp}
Let $p\in [1,\infty)$. Let $X$ be a Banach space and $\ell\in \N$. Assume that there exists $C>0$ such that for every $k\in\N$ there exists a sequence  $(z_n)_{n\in\N}$ in $C\cdot B_{X^{(2\ell)}}$ so that 
\[\Big\|\sum_{j=1}^k\eps_jz_{n_j}\Big\|\geq k^{1/p},\]
for all $(\eps_j)_{j=1}^k\in \{-1,1\}^k$ and all $n_1<\ldots< n_k\in \N$.\footnote{This is satisfied if $\ell_p$ linearly embeds into $X^{(2\ell)}$ or, more generally, if $X^{(2\ell)}$ has an $\ell_p$-spreading model.} Then $X$ has  $\coKR(p)$. 
\end{thm}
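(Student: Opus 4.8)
The plan is to build, for each $k\in\N$, an explicit $C'$-Lipschitz map witnessing $\coKR(p,C',\ell+1,(I_j)_{j=1}^{\ell+1})$ for a suitable constant $C'$ depending only on $C$. The natural choice of directed sets is $I_1=\N$ together with $I_{s+1}=I_{X^{(2s)}}$ for $s\in\{1,\ldots,\ell\}$, and the map will be the ``iterated summing'' map $f=S(\ell+1,k,(x_{\bar u})_{\bar u})$, where the family $(x_{\bar u})_{\bar u\in\N\times I_{X^{(2)}}\times\cdots\times I_{X^{(2\ell)}}}$ is the one produced by Lemma \ref{LemmaCORSeqInHighdual} applied to the sequence $(z_n)_{n\in\N}\subset C\cdot B_{X^{(2\ell)}}$ hypothesized in the statement. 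Since the $x_{\bar u}$ lie in $C\cdot B_X$ and we use the Hamming metric on the domain, the remark after the definition of $S$ gives $\Lip(f)\le 2C$, so $f$ is $2C$-Lipschitz as required.

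The second step is to extract the lower estimate. Fix $\bar u_1=\bar n$ and $\bar v_1=\bar m$ in $[\N]^k$ and take cofinal ultrafilters $\cV_{s+1}$ on $I_{X^{(2s)}}$ for $s=1,\ldots,\ell$. I would evaluate the iterated ultrafilter limit
\[
\lim_{\bar u_2,\bar v_2,\cV_2}\cdots\lim_{\bar u_{\ell+1},\bar v_{\ell+1},\cV_{\ell+1}}\big\|f(\bar u,\bar v)\big\|
\]
and use the key property of the family guaranteed by Lemma \ref{LemmaCORSeqInHighdual}: for every $\eps>0$ there is a cascade of choices $i_1\in I_1,\ldots,i_\ell\in I_\ell$ (each depending on the previous tuples) such that once all later tuples dominate the corresponding $i_s$, we have $\|F(\bar n)-F(\bar m)\|-\eps\le\|f(\bar n,\bar u')-f(\bar m,\bar v')\|$, where $F=S(1,k,(z_n)_n)$ is simply $F(\bar n)=\sum_{j=1}^k z_{n_j}$. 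Because each $\cV_{s+1}$ is cofinal, the ultrafilter limit ``sees'' tuples beyond any fixed $i_s$, so pushing $\eps\to0$ through the nested limits yields
\[
\lim_{\bar u_2,\bar v_2,\cV_2}\cdots\lim_{\bar u_{\ell+1},\bar v_{\ell+1},\cV_{\ell+1}}\big\|f(\bar u,\bar v)\big\|\ \ge\ \big\|F(\bar n)-F(\bar m)\big\|.
\]
Now $F(\bar n)-F(\bar m)=\sum_{j=1}^k z_{n_j}-\sum_{j=1}^k z_{m_j}$; after passing to a common enumeration of the $2k$ indices $\{n_1,\ldots,n_k,m_1,\ldots,m_k\}$ and collecting the cancellations, the number of surviving $\pm1$-weighted distinct $z$'s is exactly $d_\Delta(\bar n,\bar m)=|\bar n\Delta\bar m|$, so the spreading-model hypothesis $\|\sum\eps_jz_{n_j}\|\ge k^{1/p}$ (applied with $d_\Delta(\bar n,\bar m)$ in place of $k$) gives $\|F(\bar n)-F(\bar m)\|\ge d_\Delta(\bar n,\bar m)^{1/p}$. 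Setting $C'=\max\{2C,1\}$ (or any constant dominating both $2C$ and the additive slack), this is exactly the inequality $\ge (C')^{-1}d_\Delta(\bar n,\bar m)^{1/p}-C'$ demanded in Definition \ref{DefiCoKRp}(i), with $\bar u_1=\bar n,\bar v_1=\bar m$ playing the distinguished role.

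The step I expect to be most delicate is the bookkeeping in the nested-limit argument: Lemma \ref{LemmaCORSeqInHighdual} delivers its inequality only after a \emph{quantifier cascade} ($\exists i_1\,\forall\,\cdots\,\exists i_\ell\,\forall$), and one must check carefully that iterated ultrafilter limits along cofinal ultrafilters interact correctly with this cascade — specifically, that after fixing $i_1$ one may still choose the limit over $\bar u_2,\bar v_2$ ranging cofinally past $i_1$, then fix $i_2$ depending on that choice, and so on. This is the same maneuver used in the proof of Theorem \ref{ThmKRpImpliesIteratedDualspBanSaks} (the inductive selection of the $i_j$ and $\bar u_j,\bar v_j$), so I would organize the argument to parallel that proof: peel off one ultrafilter limit at a time, at each stage absorbing an $\eps/(2\ell)$ error, and conclude after $\ell$ stages with total error $\le\eps$. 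Everything else — the Lipschitz bound, the combinatorial identification of $d_\Delta$ with the count of uncancelled terms, and the final rescaling to absorb constants into $C'$ — is routine. Hence $X$ has $\coKR(p,C',\ell+1,(I_j)_{j=1}^{\ell+1})$, and therefore $\coKR(p)$.
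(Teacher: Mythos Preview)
Your proposal is correct and follows essentially the same route as the paper: apply Lemma \ref{LemmaCORSeqInHighdual} to the sequence $(z_n)_n$ to obtain the family $(x_{\bar u})_{\bar u}$, set $f=S(\ell+1,k,(x_{\bar u})_{\bar u})$ and $F=S(1,k,(z_n)_n)$, note $\Lip(f)\le 2C$, identify $F(\bar n)-F(\bar m)$ as a $\pm1$-signed sum of $d_\Delta(\bar n,\bar m)$ distinct $z$'s, and feed the quantifier cascade of the lemma into the cofinal ultrafilter limits. The one simplification in the paper is that it skips the $\eps/(2\ell)$ bookkeeping you anticipate: it simply invokes Lemma \ref{LemmaCORSeqInHighdual} with $\eps=1$, obtaining $\lim\cdots\lim\|f(\bar n,\bar u)-f(\bar m,\bar v)\|\ge d_\Delta(\bar n,\bar m)^{1/p}-1$ directly, which already matches the form required by Definition \ref{DefiCoKRp}.
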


\begin{proof}
Let $C>0$ be as above and for each $j\in \{1,\ldots,\ell\}$ let $I_j=I_{X^{(2j)}}$. Fix $k\in\N$ and let $(z_n)_n$ be a sequence in $C\cdot B_{X^{(2\ell)}}$ as in the statement above. Let $(x_{\bar u})_{\bar u\in \N\times I_1\times\ldots \times I_\ell}$ be the family in $C\cdot B_X$  given by Lemma \ref{LemmaCORSeqInHighdual}. Set $F=S(1,k,(z_n)_n)$ and $f=S(\ell+1,k,(x_{\bar u})_{\bar u})$.

 Notice that, for all $\bar{n},\bar{m}\in[\N]^{k}$, we have that
\[F(\bar{n})-F(\bar{m})=\sum_{j=1}^{ d_{\Delta}(\bar{n},\bar{m})}\eps_jz_{s_j},\]
for some  $(\eps_j)_{j=1}^{d_{\Delta}(\bar{n},\bar{m})}\in \{-1,1\}^{ d_{\Delta}(\bar{n},\bar{m})}$ and some  $s_1<\ldots< s_{d_{\Delta}(\bar{n},\bar{m})}\in \N$. Therefore, it follows that 
\[\|F(\bar{n})-F(\bar{m})\|\geq d_{\Delta}(\bar{n},\bar{m})^{1/p},\]
for all $\bar{n},\bar{m}\in [\N]^{k}$. 

Since $(x_{\bar u})_{\bar u} $ is in $C\cdot B_X$, it follows that $\Lip(f)\leq 2C$. For each $j\in\{1,\ldots,\ell\}$, let  $\cV_j$ be a cofinal ultrafilter in $I_j$. Using the conclusion of Lemma \ref{LemmaCORSeqInHighdual} for $\eps=1$, it follows that,  
\[\lim_{\bar{u}_1,\bar{v}_1,\cV_1}\ldots\lim_{\bar{u}_\ell,\bar{v}_\ell,\cV_\ell} \|f(\bar{n},\bar{u})-f(\bar{m},\bar{v})\|\geq  d_\Delta(\bar{n},\bar{m})^{1/p}-1,\]
for all $\bar{n},\bar{m}\in[\N]^{k}$. This shows that $X$ has $\coKR(p)$.
\end{proof}

\begin{cor}\label{CorHighDualsWithoutABSImpliesCoKR1}
Let $p\in [1,\infty)$. Let $X$ be a Banach space such that $X^{(2\ell)}$ does not have the alternating Banach-Saks property for some for some $\ell\in\N$.  Then $X$ has  $\coKR(1)$.
\end{cor}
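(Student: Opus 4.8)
The plan is to deduce Corollary \ref{CorHighDualsWithoutABSImpliesCoKR1} directly from Theorem \ref{ThmBadDualsImpliesCoKRp} with $p=1$, so the only real work is to translate the hypothesis ``$X^{(2\ell)}$ does not have the alternating Banach-Saks property'' into the combinatorial estimate required there. Concretely, I want to produce, for each $k\in\N$, a sequence $(z_n)_{n\in\N}$ in $C\cdot B_{X^{(2\ell)}}$ (with $C$ independent of $k$) such that $\|\sum_{j=1}^k\eps_j z_{n_j}\|\geq k$ for every choice of signs $(\eps_j)_{j=1}^k\in\{-1,1\}^k$ and every $n_1<\dots<n_k$.

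First I would recall what failure of the alternating Banach-Saks property gives. Since $Y:=X^{(2\ell)}$ does not have the alternating Banach-Saks property, by the remark recorded in the preliminaries (a Banach space has the alternating $p$-Banach-Saks property iff it does not contain $\ell_1$ and has the weak $p$-Banach-Saks property), and more to the point by the well-known characterization of the alternating Banach-Saks property in terms of spreading models, $Y$ admits a bounded sequence that generates an $\ell_1$-spreading model; equivalently, there is a bounded sequence $(y_n)_n$ in $Y$ and a constant $c>0$ with $\|\sum_{j\in A}\eps_j y_{n_j}\|\geq c|A|$ for all finite $A$, all signs, and all $n_1<\dots$. (If one prefers to avoid invoking spreading-model theory as a black box: the negation of the alternating Banach-Saks property, unwound through Rosenthal's $\ell_1$ theorem and a Ramsey argument, yields exactly such a sequence — either $Y\supseteq\ell_1$, giving an $\ell_1$-spreading model outright, or $Y$ has the weak but not the alternating Banach-Saks property, which is impossible, so in fact $Y\supseteq\ell_1$. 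I would cite \cite{Braga2017Studia} for the clean statement that the alternating $p$-Banach-Saks property is equivalent to not containing $\ell_1$ together with the weak $p$-Banach-Saks property, and argue accordingly.)

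Granting such a sequence $(y_n)_n$ with $\sup_n\|y_n\|\le M$ and lower $\ell_1$-estimate constant $c>0$, I would set $z_n = (M/c)\cdot (c/M) \cdots$ — more simply, rescale: put $z_n = \frac{1}{c} y_n$ if $c\le 1$, or just take $z_n=y_n$ and absorb constants, choosing $C$ large enough (e.g. $C=\max\{M/c,\,1\}$ after normalizing so that the lower estimate has constant $1$). Then $(z_n)_n\subset C\cdot B_{X^{(2\ell)}}$ and $\|\sum_{j=1}^k\eps_j z_{n_j}\|\geq k$ for all signs and all increasing tuples, with $C$ independent of $k$ — indeed one can reuse the same single sequence for every $k$. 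This is precisely the hypothesis of Theorem \ref{ThmBadDualsImpliesCoKRp} with $p=1$, which therefore yields that $X$ has $\coKR(1)$, as claimed.

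The main obstacle is purely the first translation step: extracting from ``no alternating Banach-Saks property'' a uniform lower $\ell_1$-type estimate with signs. The subtlety is that a priori the failure of the alternating Banach-Saks property only gives, for each $k$, some finite configuration resisting alternating Cesàro averaging, not a single infinite sequence with uniform estimates for all subsets and all sign patterns. The standard fix is a Ramsey/spreading-model stabilization: pass to a subsequence generating a spreading model, observe that this spreading model has no alternating Banach-Saks behavior, and conclude it must dominate the $\ell_1$ summing basis in the symmetric (all-signs) sense. I would lean on \cite{Braga2017Studia}, Proposition 3.1 (cited already in the excerpt) to make this rigorous rather than reproving it, and then the rest is the bookkeeping above plus a one-line appeal to Theorem \ref{ThmBadDualsImpliesCoKRp}.
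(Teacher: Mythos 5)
Your main argument is exactly the paper's proof: failure of the alternating Banach--Saks property for $X^{(2\ell)}$ is equivalent to the existence of an $\ell_1$-spreading model (Beauzamy), which after rescaling and passing to tails supplies, for each $k$, a sequence $(z_n)_n$ in $C\cdot B_{X^{(2\ell)}}$ with $\|\sum_{j=1}^k\eps_j z_{n_j}\|\ge k$ for all signs and all increasing tuples; Theorem \ref{ThmBadDualsImpliesCoKRp} with $p=1$ then gives $\coKR(1)$. That part is correct and is precisely how the paper argues (it cites Beauzamy, Section III, Theorem 1, for the spreading-model characterization).

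However, the parenthetical ``elementary alternative'' you offer is wrong and should be deleted. The negation of the alternating Banach--Saks property does not split into ``$Y\supseteq\ell_1$'' versus ``$Y$ has the weak but not the alternating Banach--Saks property'': the correct negation of ``($Y$ does not contain $\ell_1$) and ($Y$ has the weak Banach--Saks property)'' is ``$Y$ contains $\ell_1$ or $Y$ fails the weak Banach--Saks property,'' and the second alternative is very much possible without $\ell_1$ sitting inside $Y$. The Schreier space is a concrete counterexample to your conclusion ``so in fact $Y\supseteq\ell_1$'': it is $c_0$-saturated (hence contains no copy of $\ell_1$) yet its unit vector basis is weakly null and generates an $\ell_1$-spreading model, so it fails both the weak and the alternating Banach--Saks properties. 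So one cannot bypass the spreading-model characterization in the way you sketch; keep the appeal to Beauzamy (or to Proposition 3.1 of the cited Studia paper used correctly for the $\ell_1$-spreading-model statement) as the actual justification.
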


\begin{proof}
This is a trivial consequence of Theorem \ref{ThmBadDualsImpliesCoKRp} and the fact that a Banach space does not have the alternating Banach-Saks property if and only if it has an $\ell_1$-spreading model (see \cite{Beauzamy1979}, Section III, Theorem 1). 
\end{proof}

\section{Banach spaces with separable iterated duals}\label{SectionSeparableDuals}

As mentioned in Section \ref{SectionConcentrationInequality}, if $X$ is separable, $I_{X^*}$ can be chosen to be order isomorphic to $(\N,\leq)$. Therefore, the results in the previous sections can be rewritten so that the families obtained are indexed over $[\N]^k$. This not only makes the statements visually more pleasant, but also, since we have the Ramsey theory machinery for colorings of $[\N]^k$, this allow us to obtain stronger results. In this section, we study those strengthenings and apply those results to the spaces constructed by J. Lindentrauss in \cite{Lindenstrauss1971} and iterations of those spaces. 

\begin{cor}\label{CorCORHighlySep}
Let $\ell\in\N$. Let $X$ be a Banach space so that $X^{(2\ell-1)}$ is separable. Let $C>0$ and let $(z_n)_{n\in\N}$ be a  sequence in $C\cdot B_{X^{(2\ell)}}$.  There exists a  family $(x_{\bar{n}})_{\bar{n}\in[\N]^{\ell+1}}$ in $C\cdot B_X$ so that, for all $\eps>0$ and all $k\in\N$, there exists an infinite subset $\M\subset \N$ so that, letting  $f=S(\ell+1,k,(x_{\bar{n}})_{\bar{n}\in[\M]^{\ell+1}})$ and $F=S(1,k,(z_n)_{n\in\M})$, we have that  
\[\|F(\bar{n})-F(\bar{m})\|-\eps\leq \|f(\bar{n},\bar{n}')-f(\bar{m},\bar{m}')\|,\]
for all $(\bar{n},\bar{n}'),(\bar{m},\bar{m}')\in[\M]^{(\ell+1)k}$, with $\bar{n},\bar{m}\in[\M]^{k}$ and $\bar{n}',\bar{m}'\in[\M]^{\ell k}$.
\end{cor}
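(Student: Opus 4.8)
The plan is to obtain Corollary~\ref{CorCORHighlySep} as a ``separable, $\N$-indexed'' refinement of Lemma~\ref{LemmaCORSeqInHighdual}, exploiting that the hypothesis $X^{(2\ell-1)}$ separable allows each of the directed sets of weak$^*$-neighborhoods to be replaced by $(\N,\le)$ and then using Ramsey theory to turn the nested ``$\exists i_j\,\forall \bar u_j,\bar v_j\succ i_j$'' quantifiers from Lemma~\ref{LemmaCORSeqInHighdual} into a single infinite stabilizing set $\M$. First I would note that separability of $X^{(2\ell-1)}$ forces $X$, $X^{(2)}$, \dots, $X^{(2\ell-2)}$ all to be separable, so that for each $j\in\{1,\dots,\ell\}$ the system $I_j=I_{X^{(2j)}}$ of weak$^*$-open neighborhoods of $0\in X^{(2j)} = (X^{(2j-1)})^*$ can be chosen countable and order isomorphic to $(\N,\le)$ (this is exactly the identification $I_{X^*}=\N$ recorded in Section~\ref{SectionConcentrationInequality}, applied to the separable predual $X^{(2j-1)}$). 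With this identification, the family $(x_{\bar u})_{\bar u\in \N\times I_1\times\dots\times I_\ell}$ produced by Lemma~\ref{LemmaCORSeqInHighdual} is already a family $(x_{\bar n})_{\bar n\in \N^{\ell+1}}$, and passing to strictly increasing tuples gives $(x_{\bar n})_{\bar n\in[\N]^{\ell+1}}$; set $f=S(\ell+1,k,(x_{\bar n})_{\bar n})$ and $F=S(1,k,(z_n)_n)$ as required.

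Next I would unwind the conclusion of Lemma~\ref{LemmaCORSeqInHighdual} in this countable setting. Fix $\eps>0$ and $k\in\N$. For each pair $(\bar n,\bar m)\in[\N]^k\times[\N]^k$, Lemma~\ref{LemmaCORSeqInHighdual} yields, in a nested fashion over $j=1,\dots,\ell$, integers $i_j=i_j(\bar n,\bar m,\dots)$ such that whenever $\bar n'_j,\bar m'_j\in[\N]^k$ satisfy $\bar n'_j,\bar m'_j\succ i_j$ for all $j$, one has $\|F(\bar n)-F(\bar m)\|-\eps\le\|f(\bar n,\bar n')-f(\bar m,\bar m')\|$. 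Because each $I_j\cong(\N,\le)$ and ``$\succ i_j$'' just means all coordinates exceed the integer $i_j$, a single threshold $N=N(\bar n,\bar m)\in\N$ (the max of the finitely many $i_j$'s obtained along any branch, which are bounded since $[\N]^k$ is countable --- more carefully, one fixes the branch by first choosing large $\bar u_j$'s, exactly as in the proof of Theorem~\ref{ThmKRpImpliesIteratedDualspBanSaks}) suffices: if $\bar n',\bar m'\in[\N]^{\ell k}$ have all coordinates $>N(\bar n,\bar m)$, then the inequality holds. Thus there is a function $(\bar n,\bar m)\mapsto N(\bar n,\bar m)$ with this property, for all $(\bar n,\bar m)$ with $\max(\bar n,\bar m)$ arbitrary.

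Finally I would stabilize this threshold function by a diagonal/Ramsey argument to get the uniform infinite set $\M$. The cleanest route: build $\M=\{m_1<m_2<\dots\}$ recursively so that for every $t$, and every pair $(\bar n,\bar m)$ drawn from $\{m_1,\dots,m_t\}$, one has $N(\bar n,\bar m)<m_{t+1}$ --- possible since at stage $t$ there are only finitely many such pairs, so $\max N(\bar n,\bar m)$ over them is finite. Then for any $(\bar n,\bar m),(\bar n',\bar m')$ with $\bar n,\bar m\in[\M]^k$, $\bar n',\bar m'\in[\M]^{\ell k}$ and $(\bar n,\bar n'),(\bar m,\bar m')\in[\M]^{(\ell+1)k}$ (so $\bar n',\bar m'$ come after $\bar n,\bar m$ in $\M$), the coordinates of $\bar n',\bar m'$ exceed $N(\bar n,\bar m)$ by construction, giving $\|F(\bar n)-F(\bar m)\|-\eps\le\|f(\bar n,\bar n')-f(\bar m,\bar m')\|$, which is exactly the asserted inequality. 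The main obstacle I anticipate is bookkeeping the nested quantifier structure of Lemma~\ref{LemmaCORSeqInHighdual}: one must be careful that the thresholds $i_j$ chosen for later coordinates are allowed to depend on the (already fixed, large) earlier coordinates, so the reduction to a \emph{single} integer threshold $N(\bar n,\bar m)$ must fix a canonical branch through the quantifier tree --- mirroring the inductive choice of the $\bar u_j$'s in the proof of Theorem~\ref{ThmKRpImpliesIteratedDualspBanSaks} --- rather than naively taking a supremum over all branches. Once that branch is fixed, everything else is a routine diagonalization and no genuine Ramsey-theoretic coloring argument beyond a greedy recursion is needed.
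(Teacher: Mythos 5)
Your overall strategy (identify each $I_{X^{(2j)}}$ with $\N$ via separability of the odd duals, then stabilize by a diagonal/Ramsey argument) is the same as the paper's, which disposes of the corollary in one line. However, the concrete argument you give breaks at the step where you collapse the nested quantifiers of Lemma~\ref{LemmaCORSeqInHighdual} into a single threshold $N(\bar n,\bar m)$. The lemma's conclusion has the form $(\exists i_1)(\forall \bar u'_1,\bar v'_1\succ i_1)(\exists i_2)(\forall \bar u'_2,\bar v'_2\succ i_2)\cdots$, where $i_2$ depends on the chosen $\bar u'_1,\bar v'_1$; as these range over all tuples beyond $i_1$, the corresponding $i_2$'s need not be bounded (countability of $[\N]^k$ is irrelevant here), so there is no single $N(\bar n,\bar m)$ beyond which the inequality holds for \emph{all} $\bar n',\bar m'$. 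Your proposed repair --- ``fixing a canonical branch'' as in the proof of Theorem~\ref{ThmKRpImpliesIteratedDualspBanSaks} --- produces one witnessing pair $(\bar n',\bar m')$, not a tail condition; that suffices in Theorem~\ref{ThmKRpImpliesIteratedDualspBanSaks} because there the final Ramsey step only needs one good instance per infinite set for a single interlaced order type, whereas the corollary demands the inequality for every admissible configuration in $[\M]^{(\ell+1)k}$.

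There is a second, independent failure in your last step even if one grants a threshold $N(\bar n,\bar m)$: the hypothesis $(\bar n,\bar n')\in[\M]^{(\ell+1)k}$ only forces the coordinates of $\bar n'$ to exceed $\max(\bar n)$, not $\max(\bar m)$. Take $k=\ell=1$ and $\M=\{m_1<m_2<\cdots\}$ built by your recursion; for the configuration $\bar n=(m_1)$, $\bar n'=(m_2)$, $\bar m=(m_3)$, $\bar m'=(m_4)$, your construction guarantees $N(m_1,m_3)<m_4$ but says nothing about $m_2$ versus $N(m_1,m_3)$, which is determined only after $m_3$ is chosen and may dwarf $m_2$. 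These ``lagging'' configurations, in which one tuple sits largely below the other, are precisely the ones not reached by the threshold structure of Lemma~\ref{LemmaCORSeqInHighdual}, and any correct proof must address them --- for instance by applying Ramsey's theorem to each order type of pairs of $(\ell+1)k$-tuples and then exhibiting, for every order type, at least one good instance inside the homogeneous set (which again runs into the same obstruction for lagging order types), or by strengthening Lemma~\ref{LemmaCORSeqInHighdual} in the separable setting so that the thresholds governing $\bar n'$ depend only on $\bar n$; the latter is where the weak$^*$ metrizability of the dual balls coming from separability of $X^{(2\ell-1)}$ would actually have to be exploited. As written, your argument does not establish the corollary.
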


\begin{proof}
This is a trivial consequence of Lemma \ref{LemmaCORSeqInHighdual}. Indeed, since $X^{(2\ell-1)}$ is separable, $X^{(j)}$ is separable for all $j\in\{1,\ldots,2\ell-1\}$. Hence, without loss of generality, assume $I_{X^{(2j)}}=\N$, for all $j\in\{1,\ldots,\ell\}$. So, the output of Lemma \ref{LemmaCORSeqInHighdual} is a bounded family $(x_{\bar{n}})_{\bar{n}\in[\N]^{\ell+1}}$. The conclusion now follows straightforwardly from standard Ramsey theory and the property satisfied by  $(x_{\bar{n}})_{\bar{n}\in[\N]^{\ell+1}}$  in Lemma \ref{LemmaCORSeqInHighdual}.
\end{proof}

In \cite{GuentnerKaminker2004}, the authors introduced the compression modulus of a metric space into another. We now introduce a variant of this modulus which will give us information regarding the  compression of the family $([\N]^k)_{k=1}^\infty$ into a metric space $(X,d)$. This should be seen as a  countable version of $\coKR(p)$ which  gives us  better tools to work  with Banach spaces with separable iterated duals.

\begin{defi}\label{DefiModulusKappa}
Let $\ell\in\N$ and $(X,d)$ be metric space. Define $\kappa(X,\ell)$ as the supremum of all $\alpha\in [0,1]$ for which   there exist $L\geq 1$ such that for all $k\in\N$ there exists a map $f:[\N]^{\ell k}\to X$ so that
\[\frac{1}{L}\cdot d_\Delta(\bar{n},\bar{m})^\alpha-L\leq d\big(f(\bar{n},\bar{n}'),f(\bar{m},\bar{m}')\big)\leq 
L\cdot d_{\mathrm{H}}\big((\bar{n},\bar{n}'),(\bar{m},\bar{m}')\big) ,\]
for all $(\bar{n},\bar{n}'),(\bar{m},\bar{m}')\in [\N]^{\ell k}$ with $\bar{n},\bar{m}\in[\N]^k$ and $\bar{n}',\bar{m}'\in[\N]^{(\ell-1)k}$. Define $\kappa(X)=\sup_{\ell\in\N} \kappa(X,\ell)$.
\end{defi}

Similarly as we have with $\coKR(p)$, the modulus $\kappa(X)$ is stable under coarse Lipschitz embeddings. Precisely, we have the following trivial proposition.

\begin{prop}
Let $(X,d)$ be a metric space and $\ell\in\N$. The following hold. 
\begin{enumerate}[(i)]
\item $X$ has $\coKR(p)$, for all $p\in [1,\infty)$ with $p>\kappa(X)^{-1}$.
\item If  $(Y,\partial)$ is a metric space so that $X$ coarse Lipschitzly embeds into $Y$, then $\kappa(Y,\ell)\geq \kappa(X,\ell)$.  
\end{enumerate}\qed
\end{prop}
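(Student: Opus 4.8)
The proposition has two parts, both flagged as "trivial," so the plan is to unwind the relevant definitions and chase the estimates.

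\textbf{Part (i).} Fix $p \in [1,\infty)$ with $p > \kappa(X)^{-1}$; equivalently $1/p < \kappa(X)$. By definition of $\kappa(X) = \sup_{\ell} \kappa(X,\ell)$, there is some $\ell \in \N$ with $1/p < \kappa(X,\ell)$, and then by definition of $\kappa(X,\ell)$ as a supremum over admissible exponents $\alpha$, the value $\alpha = 1/p$ is itself admissible: there is $L \geq 1$ so that for every $k \in \N$ there is a map $f : [\N]^{\ell k} \to X$ with
\[
\tfrac{1}{L} d_\Delta(\bar n,\bar m)^{1/p} - L \leq d\big(f(\bar n,\bar n'), f(\bar m,\bar m')\big) \leq L\, d_{\mathrm{H}}\big((\bar n,\bar n'),(\bar m,\bar m')\big)
\]
for all $(\bar n,\bar n'),(\bar m,\bar m') \in [\N]^{\ell k}$ with $\bar n,\bar m \in [\N]^k$, $\bar n',\bar m' \in [\N]^{(\ell-1)k}$. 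The right inequality says $f$ is $L$-Lipschitz with respect to $d_{\mathrm{H}}$. To match Definition \ref{DefiCoKRp}, I take the directed sets $I_i = \N$ (with the usual order, which has infinite tail) for $i = 1,\dots,\ell$, and reindex: a tuple in $[\N]^{\ell k}$ is exactly an element of $[\N]^k \times \cdots \times [\N]^k$ ($\ell$ factors), writing $(\bar u_1,\dots,\bar u_\ell)$ with $\bar u_1 = \bar n$ playing the role of the distinguished first block. The lower bound already has the form $C^{-1} d_\Delta(\bar u_1,\bar v_1)^{1/p} - C$ with $C = L$, and it holds \emph{for all} choices of $\bar u_2,\dots,\bar v_\ell$, hence a fortiori after taking iterated ultrafilter limits $\lim_{\bar u_2,\bar v_2,\cV_2}\cdots\lim_{\bar u_\ell,\bar v_\ell,\cV_\ell}$ over any cofinal ultrafilters (the limit of a family bounded below by a constant not depending on the limit variables is bounded below by that same constant). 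Thus $X$ has $\coKR(p, L, \ell, (\N)_{i=1}^\ell)$, hence $\coKR(p,L)$, hence $\coKR(p)$.

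\textbf{Part (ii).} Suppose $g : X \to Y$ is a coarse Lipschitz embedding, so there is $A \geq 1$ with $A^{-1} d(x,x') - A \leq \partial(g(x),g(x')) \leq A\, d(x,x') + A$ for all $x,x' \in X$. Given $\alpha < \kappa(X,\ell)$, pick $L$ and, for each $k$, a map $f_k : [\N]^{\ell k} \to X$ witnessing admissibility of $\alpha$. Compose: $h_k := g \circ f_k : [\N]^{\ell k} \to Y$. For the upper estimate, $\partial(h_k(\bar n,\bar n'),h_k(\bar m,\bar m')) \leq A\, d(f_k(\cdots),f_k(\cdots)) + A \leq A L\, d_{\mathrm{H}}(\cdots) + A \leq (AL + A)\, d_{\mathrm{H}}(\cdots)$ since $d_{\mathrm{H}} \geq 1$ whenever the tuples differ (and both sides vanish when they coincide). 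For the lower estimate, $\partial(h_k(\cdots),h_k(\cdots)) \geq A^{-1} d(f_k(\cdots),f_k(\cdots)) - A \geq A^{-1}(L^{-1} d_\Delta(\bar n,\bar m)^\alpha - L) - A = (AL)^{-1} d_\Delta(\bar n,\bar m)^\alpha - (A^{-1}L + A)$. Setting $L' = \max\{AL + A,\ AL,\ A^{-1}L + A\} + 1$ gives the two-sided bound required in Definition \ref{DefiModulusKappa} for $Y$ with constant $L'$, so $\alpha$ is admissible for $\kappa(Y,\ell)$. Taking the supremum over $\alpha < \kappa(X,\ell)$ yields $\kappa(Y,\ell) \geq \kappa(X,\ell)$.

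\textbf{Main obstacle.} There is no serious obstacle; the only points requiring a little care are (a) correctly identifying $[\N]^{\ell k}$ with the product $[\N]^k \times \cdots \times [\N]^k$ and checking that the "for all tail tuples" lower bound in Definition \ref{DefiModulusKappa} is strictly stronger than the "iterated ultrafilter limit" lower bound in Definition \ref{DefiCoKRp}, and (b) bookkeeping the affine constants when composing with a coarse Lipschitz map, in particular converting additive slack into multiplicative slack using $d_{\mathrm{H}} \geq 1$ on distinct tuples — the very reason the $\pm C$ terms are built into the definitions, as noted in the Remark following Proposition \ref{PropKRpCoKRpProperties}.
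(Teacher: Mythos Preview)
Your argument is the definition-chase the paper intends (the paper gives no proof beyond \qed), and Part (ii) is fine as written. One small caveat in Part (i): the claim that ``a tuple in $[\N]^{\ell k}$ is exactly an element of $[\N]^k \times \cdots \times [\N]^k$'' is not literally correct --- $[\N]^{\ell k}$ consists of \emph{globally} strictly increasing $\ell k$-tuples and is a proper subset of the $\ell$-fold product, where the blocks need not be ordered relative to one another. Definition~\ref{DefiCoKRp} asks for a $C$-Lipschitz map on the full product, while Definition~\ref{DefiModulusKappa} only furnishes one on $[\N]^{\ell k}$. Your lower-bound reasoning survives (the iterated cofinal limits only see tuples whose later blocks eventually dominate all earlier ones, hence lie in $[\N]^{\ell k}$), but the global Lipschitz requirement needs a word: one should either extend $f$ to the product or, more honestly, observe that every map actually produced in the paper (via the $S(\ell,k,\cdot)$-construction from families indexed over full products $I_1\times\cdots\times I_\ell$) is already defined and Lipschitz on the product. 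The paper is equally informal about this distinction elsewhere (compare the indexing over $[\N]^{\ell+1}$ in Corollary~\ref{CorCORHighlySep} with the product indexing required by $S$), so your write-up matches the ambient level of rigor.
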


\begin{cor}\label{CorNonColapsingEmbGraphs}
Let $\ell\in\N$ and $p\in [1,\infty)$. Let $X$ be a Banach space so that $X^{(2\ell-1)}$ is separable and let $(z_n)_{n\in\N}$ be a bounded sequence in $X^{(2\ell)}$ so that 
\[\Big\|\sum_{j=1}^k\eps_jz_{n_j}\Big\|\geq k^{1/p},\]
for all $k\in\N$, all $(\eps_j)_{j=1}^k\in \{-1,1\}^k$ and all $n_1<\ldots< n_k\in \N$. Then $\kappa(X,\ell+1)\geq 1/p$.
\end{cor}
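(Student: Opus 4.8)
\textbf{Proof proposal for Corollary \ref{CorNonColapsingEmbGraphs}.}

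The plan is to combine Corollary \ref{CorCORHighlySep} with the lower bound computation that already appears in the proof of Theorem \ref{ThmBadDualsImpliesCoKRp}. First I would apply Corollary \ref{CorCORHighlySep} to the given bounded sequence $(z_n)_{n\in\N}$ in $X^{(2\ell)}$ (recall $X^{(2\ell-1)}$ is separable, which is exactly the hypothesis Corollary \ref{CorCORHighlySep} requires), with a normalizing constant $C>0$ chosen so that $(z_n)_n\subset C\cdot B_{X^{(2\ell)}}$. This produces a family $(x_{\bar n})_{\bar n\in[\N]^{\ell+1}}$ in $C\cdot B_X$; set $f=S(\ell+1,k,(x_{\bar n})_{\bar n})$ and $F=S(1,k,(z_n)_n)$. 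Since $(x_{\bar n})_{\bar n}$ lies in $C\cdot B_X$, the map $f$ is $2C$-Lipschitz with respect to $d_{\mathrm{H}}$, which gives the right-hand (upper) inequality in Definition \ref{DefiModulusKappa} with $L=2C$ (up to enlarging $L$).

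Next I would establish the lower bound. As in the proof of Theorem \ref{ThmBadDualsImpliesCoKRp}, for $\bar n,\bar m\in[\N]^k$ we have
\[
F(\bar n)-F(\bar m)=\sum_{j=1}^{d_\Delta(\bar n,\bar m)}\eps_j z_{s_j}
\]
for suitable signs $\eps_j\in\{-1,1\}$ and indices $s_1<\cdots<s_{d_\Delta(\bar n,\bar m)}$, so the hypothesis on $(z_n)_n$ yields $\|F(\bar n)-F(\bar m)\|\geq d_\Delta(\bar n,\bar m)^{1/p}$. Feeding this into the conclusion of Corollary \ref{CorCORHighlySep} with, say, $\eps=1$, and choosing the corresponding infinite set $\M\subset\N$, we get a map (namely $f$ restricted to tuples drawn from $\M$) satisfying
\[
d_\Delta(\bar n,\bar m)^{1/p}-1\leq \|f(\bar n,\bar n')-f(\bar m,\bar m')\|
\]
for all admissible tuples. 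After relabelling $\M$ as $\N$ (via the canonical order isomorphism, which preserves $d_\Delta$ and $d_{\mathrm{H}}$), this is precisely the lower estimate in Definition \ref{DefiModulusKappa} for the exponent $\alpha=1/p$ with $L=\max\{2C,1\}$, for the value $\ell+1$ in place of $\ell$.

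Finally, since this works for every $k\in\N$ with the same $L$ and the same exponent $1/p$, Definition \ref{DefiModulusKappa} gives $\kappa(X,\ell+1)\geq 1/p$, as claimed. I do not anticipate a genuine obstacle here: the statement is essentially a bookkeeping repackaging of Corollary \ref{CorCORHighlySep} and the internal computation of Theorem \ref{ThmBadDualsImpliesCoKRp}. The only point requiring a little care is the bookkeeping of indices — matching the block structure $\bar n\in[\N]^k$, $\bar n'\in[\N]^{\ell k}$ in Corollary \ref{CorCORHighlySep} against the block structure $\bar n\in[\N]^k$, $\bar n'\in[\N]^{(\ell+1-1)k}$ demanded by $\kappa(X,\ell+1)$, and confirming that passing to the infinite subset $\M$ and re-indexing does not disturb either the Hamming metric or the symmetric-difference metric.
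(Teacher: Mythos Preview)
Your proposal is correct and follows exactly the approach the paper takes: the paper's own proof simply says ``This follows analogously to the proof of Theorem \ref{ThmBadDualsImpliesCoKRp} but using Corollary \ref{CorCORHighlySep} instead of Lemma \ref{LemmaCORSeqInHighdual},'' and you have faithfully unpacked that sentence, including the lower bound computation for $F$, the $2C$-Lipschitz upper bound for $f$, and the relabelling of $\M$ by $\N$.
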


\begin{proof}
This follows analogously to the proof of Theorem \ref{ThmBadDualsImpliesCoKRp} but using Corollary \ref{CorCORHighlySep} instead of Lemma \ref{LemmaCORSeqInHighdual},
\end{proof}

For the remainder of this section, we discuss some applications of our work to the class of spaces constructed by J. Lindenstrauss in \cite{Lindenstrauss1971}. Precisely, let  $p\in (1,\infty)$, let $X$ be a separable Banach space and fix a dense sequence $(x_n)_n$ in $\partial B_X$. Define $Y$ by letting
\[Y=\Big\{(\lambda_n)_n\in \R^\N\mid \|(\lambda_n)_n\|_Y\coloneqq \sup_{0=p_0<\ldots< p_k} \Big(\sum_{j=1}^k\Big\| \sum_{n=p_{j-1}+1}^{p_j}\lambda_n x_n\Big\|_X^p\Big)^{1/p}<\infty\Big\}.\]
The space $(Y,\|\cdot\|_Y)$ is a Banach space and if $(\lambda_n)_n\in Y$, then $\sum_n\lambda_nx_n$ converges. Let $Q:Y\to X$ be the bounded linear map $Q((\lambda_n)_n)=\sum_n\lambda_nx_n$ and define  \[Z_{p,X}=\ker (Q).\]
 J. Lindenstrauss showed  that $Z^{**}_{p,X}/ Z_{p,X}= X$ and $Z^{***}_{p,X}=Z^{*}_{p,X}\oplus X^*$ (see \cite{Lindenstrauss1971}, Theorem in Page 279 and Corollary 1). In particular, $Z_{p,X}$ and $Z^*_{p,X}$ are both separable. Furthermore, it was proven in Theorem 2.1 of \cite{CauseyLancien2017} that  $Z_{p,X}$ is $p$-AUSable and that $Z_{p,X}^*$ is $p'$-AUSable, where $p'$ is the conjugate of $p$, i.e.,  $1/p+1/p'=1$.\footnote{Those resutls were actually only proven for $p=p'=2$ in \cite{Lindenstrauss1971} and \cite{CauseyLancien2017}, but a simple adaptation of their prove give us this general result.} In particular, if $X$ is infinite dimensional, those spaces are not quasi-reflexive. Therefore,  it is  natural to look at those spaces when looking for counter-examples for the existence of concentration inequalities. We dedicate the rest of this section to this task.

\begin{remark}
Notice that the definition of $Z_{p,X}$ depends on the sequence $(x_n)_n$. By abuse of notation, for now on we  forget about $(x_n)_n$.  Although the space $Z_{p,X}$ may depend on the sequence, the properties the space has which interest us do not.  
\end{remark}
\begin{cor}\label{CorEmbGraphsLindenstraussSpaces}
Let $p\in (1,\infty)$, $q\in [1,\infty)$ and $q'\in (1,\infty]$, with $1/q+1/q'=1$. The following holds. \begin{enumerate}[(i)]
\item $\kappa( Z_{p,\ell_q},2)\geq 1/q$.
\item $\kappa( Z^*_{p,\ell_q},2)\geq 1/q'$.
\item  $\kappa(Z^*_{p,c_0},2)=1$.
\item $\kappa(Z_{p,c_0},3)=1$.
\end{enumerate}
Moreover, the supremums above are all attained. 
\end{cor}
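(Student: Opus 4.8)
The plan is to deduce each item from Corollary \ref{CorNonColapsingEmbGraphs} together with the structural information about the Lindenstrauss spaces $Z_{p,X}$ recorded just above. Recall that $Z_{p,X}$ and $Z^*_{p,X}$ are separable, that $Z^{**}_{p,X}/Z_{p,X}=X$, and that $Z^{***}_{p,X}=Z^*_{p,X}\oplus X^*$. In particular the first dual $Z^{*}_{p,X}$ is separable, so Corollary \ref{CorNonColapsingEmbGraphs} applies with $\ell=1$ (giving information about $\kappa(\,\cdot\,,2)$) provided we can exhibit, in the \emph{second} dual of the relevant space, a bounded sequence satisfying a lower $\ell_q$-type estimate for all signs. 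For (i), the second dual of $Z_{p,\ell_q}$ contains (a subspace isomorphic to) $\ell_q$ since $Z^{**}_{p,\ell_q}/Z_{p,\ell_q}=\ell_q$, so lifting the unit vector basis of $\ell_q$ to a bounded sequence in $Z^{**}_{p,\ell_q}$ yields a sequence $(z_n)_n$ with $\|\sum_{j=1}^k\eps_j z_{n_j}\|\gtrsim k^{1/q}$ for all choices of signs; after rescaling, Corollary \ref{CorNonColapsingEmbGraphs} with $\ell=1$, $p$ there $=q$, gives $\kappa(Z_{p,\ell_q},2)\geq 1/q$. For (ii) one argues identically, using that $Z^{**}_{p,\ell_q}{}^{*}=Z^{***}_{p,\ell_q}$ has the summand $X^*=\ell_q^*=\ell_{q'}$ (for $q>1$; for $q=1$ one uses $\ell_\infty$, which contains isometric copies of all separable spaces and in particular a $\ell_1$-spreading model, or simply notes $q'=\infty$ makes the estimate $\|\sum\eps_j z_{n_j}\|\geq k^{1/\infty}=1$ trivial), so that $Z^*_{p,\ell_q}$ has separable dual $Z^{**}_{p,\ell_q}=Z^{**}_{p,\ell_q}$, wait — here the ``$X$'' of Corollary \ref{CorNonColapsingEmbGraphs} is $Z^*_{p,\ell_q}$, whose odd duals are $Z^{***}_{p,\ell_q}=Z^*_{p,\ell_q}\oplus\ell_{q'}$ (separable) and whose even dual $(Z^*_{p,\ell_q})^{**}=Z^{***}_{p,\ell_q}{}^{*}$ contains $\ell_{q'}$; lifting the $\ell_{q'}$-basis as before and invoking the corollary yields $\kappa(Z^*_{p,\ell_q},2)\geq 1/q'$.

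For (iii) and (iv) the mechanism is the same but we exploit that $c_0$ (and hence its relevant duals) contains isometric copies of $\ell_\infty^k$ for every $k$, equivalently an $\ell_\infty$-spreading model; this lets us take the exponent arbitrarily close to $1$ and drive $\kappa$ to its maximum value $1$. Concretely, for (iii), $Z^*_{p,c_0}$ has separable first dual $Z^{**}_{p,c_0}$ (since $Z^{***}_{p,c_0}=Z^*_{p,c_0}\oplus c_0^*=Z^*_{p,c_0}\oplus\ell_1$ — wait, we want \emph{the dual} of $Z^*_{p,c_0}$, namely $Z^{**}_{p,c_0}$, to be the ``$X^{(2\ell)}$'' slot). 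Let me restate: apply Corollary \ref{CorNonColapsingEmbGraphs} with $X:=Z^*_{p,c_0}$ and $\ell=1$. Then $X^{(2\ell-1)}=X^*=Z^{**}_{p,c_0}$, which is separable because $Z^{**}_{p,c_0}/Z_{p,c_0}=c_0$ is separable and $Z_{p,c_0}$ is separable. And $X^{(2\ell)}=X^{**}=(Z^*_{p,c_0})^{**}=Z^{***}_{p,c_0}{}^{*}=(Z^*_{p,c_0}\oplus c_0^{*})^{*}=Z^{**}_{p,c_0}\oplus c_0^{**}\supset \ell_\infty$, so it contains, for each $k$, a $1$-complemented isometric copy of $\ell_\infty^k$; taking $(z_n)_{n\leq k}$ to be the first $k$ basis vectors of such a copy gives $\|\sum_{j=1}^k\eps_j z_{n_j}\|=1$ in $\ell_\infty^k$, which for any $\alpha<1$ is $\geq k^\alpha$ once — no: we need a \emph{single} sequence working for all $k$. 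The clean route is: since $\ell_\infty$ embeds isometrically into $X^{**}$, and $\ell_\infty$ contains an isometric copy of every separable space, it contains $\ell_r$ for every $r\in[1,\infty)$; fix $r$ with $1/r$ as close to $1$ as desired (i.e. $r$ close to $1$), lift the $\ell_r$-basis to a bounded sequence $(z_n)_n$ in $X^{**}$ with $\|\sum_{j=1}^k\eps_j z_{n_j}\|\gtrsim k^{1/r}$ for all signs, rescale, and apply Corollary \ref{CorNonColapsingEmbGraphs} with ``$p$'' $=r$ to get $\kappa(X,2)\geq 1/r$; letting $r\downarrow 1$ gives $\kappa(Z^*_{p,c_0},2)=1$. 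For (iv) we use $X:=Z_{p,c_0}$ and $\ell=2$: the odd dual $X^{(3)}=Z^{***}_{p,c_0}=Z^*_{p,c_0}\oplus c_0^{*}=Z^*_{p,c_0}\oplus\ell_1$ is separable, and the even dual $X^{(4)}=Z^{***}_{p,c_0}{}^{*}=Z^{**}_{p,c_0}\oplus\ell_\infty\supset\ell_\infty$, so again $\ell_r$ sits isometrically in $X^{(4)}$ for all $r\in[1,\infty)$; Corollary \ref{CorNonColapsingEmbGraphs} with $\ell=2$, ``$p$''$=r$ gives $\kappa(Z_{p,c_0},3)\geq 1/r$ for all $r>1$, whence $\kappa(Z_{p,c_0},3)=1$.

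The ``moreover'' assertion — that the suprema are attained — is where a little extra care is needed: Definition \ref{DefiModulusKappa} defines $\kappa(X,\ell)$ as a supremum over exponents $\alpha$, and attainment means the displayed two-sided estimate actually holds with $\alpha=1/q$ (resp. $1/q'$, resp. $1$). For (i) and (ii) this is automatic from the argument above, since Corollary \ref{CorNonColapsingEmbGraphs} produces, for the specific exponent $1/q$ (resp. $1/q'$), a family realizing the lower bound $\tfrac1L d_\Delta^{1/q}-L$ while the upper bound $L\cdot d_{\mathrm H}$ holds by the $2C$-Lipschitz estimate on $S(\ell+1,k,\cdot)$ noted before Lemma \ref{LemmaFamilyBidualGENERAL}; so $\alpha=1/q$ (resp. $1/q'$) is itself a valid exponent, not merely a supremum of smaller ones. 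For (iii) and (iv) attainment of $\alpha=1$ requires a sequence $(z_n)_n$ in the appropriate even dual with $\|\sum_{j=1}^k\eps_j z_{n_j}\|\geq k$ for all signs and all $n_1<\dots<n_k$ — a genuine $\ell_1$-estimate, \emph{not} merely $\ell_r$ for $r\downarrow 1$. This is exactly what an isometric copy of $\ell_1$ inside $\ell_\infty\subset X^{(2\ell)}$ provides: take $(z_n)_n$ the unit vector basis of such an $\ell_1$, so $\|\sum_{j=1}^k\eps_j z_{n_j}\|_{\ell_1}=k=k^{1/1}$, and feed it into Corollary \ref{CorNonColapsingEmbGraphs} with $p=1$ to obtain $\kappa(X,\ell+1)\geq 1$; combined with $\kappa\leq 1$ by definition, equality is attained. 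The main obstacle, and the only real content beyond bookkeeping, is keeping straight which iterated dual plays the role of $X^{(2\ell)}$ in Corollary \ref{CorNonColapsingEmbGraphs} for each of the four spaces, and checking in each case that the odd dual one level below is separable (so the hypothesis ``$X^{(2\ell-1)}$ separable'' holds) — this is where the Lindenstrauss identities $Z^{**}_{p,X}/Z_{p,X}=X$ and $Z^{***}_{p,X}=Z^*_{p,X}\oplus X^*$, together with separability of $X\in\{\ell_q,c_0\}$, are used repeatedly.
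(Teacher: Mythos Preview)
Your overall strategy is the same as the paper's: for each item, identify a copy of an appropriate $\ell_r$ (or $\ell_1$) inside the relevant even iterated dual and feed its unit vector basis into Corollary~\ref{CorNonColapsingEmbGraphs}. Items (i) and (iv) are essentially correct, and the ``moreover'' clause is handled correctly once you commit to using an actual $\ell_1$-sequence rather than $\ell_r$ with $r\downarrow 1$.

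There is, however, a genuine miscomputation in (iii) (and a harmless one in (ii)). For $X=Z^*_{p,c_0}$ and $\ell=1$, the space $X^{(2\ell)}=X^{**}=(Z^*_{p,c_0})^{**}$ is $Z^{***}_{p,c_0}$, \emph{not} $(Z^{***}_{p,c_0})^*$. You have shifted by one dual, landing in $Z^{(4)}_{p,c_0}=Z^{**}_{p,c_0}\oplus\ell_\infty$ instead of $Z^{(3)}_{p,c_0}$. This matters: $Z^{***}_{p,c_0}=Z^*_{p,c_0}\oplus\ell_1$ is separable, so your claim ``$\ell_\infty$ embeds isometrically into $X^{**}$'' is false and the detour through $\ell_r\subset\ell_\infty$ collapses. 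The fix is immediate and is exactly what the paper does: since $Z^{***}_{p,c_0}$ has $\ell_1$ as a direct summand, take $(z_n)_n$ to be the $\ell_1$-basis there and apply Corollary~\ref{CorNonColapsingEmbGraphs} with exponent $p=1$ to get $\kappa(Z^*_{p,c_0},2)\geq 1$ directly, with attainment. The same slip occurs in (ii), where you write $(Z^*_{p,\ell_q})^{**}=Z^{***}_{p,\ell_q}{}^{*}$; the correct identification is $(Z^*_{p,\ell_q})^{**}=Z^{***}_{p,\ell_q}=Z^*_{p,\ell_q}\oplus\ell_{q'}$, which still contains $\ell_{q'}$, so your conclusion there survives by accident. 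In (iv) your computation of duals is correct, and the paper likewise finds $\ell_1\subset\ell_\infty\subset Z^{(4)}_{p,c_0}$; your preliminary $r\downarrow 1$ argument is an unnecessary detour once you observe $\ell_1\subset\ell_\infty$.
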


\begin{proof}
(i) Since $Z^{**}_{p,\ell_q}/Z_{p,\ell_q}=\ell_q$, $Z^{**}_{p,\ell_q}/Z_{p,\ell_q}$ contains a sequence  equivalent to the standard  $\ell_q$-basis. Let $(z_n)_n$ be a bounded sequence in $Z^{**}_{p,\ell_q}$ so that $([z_n])_n$ is equivalent to the $\ell_q$-basis, where $z\in Z^{**}_{p,\ell_q}\mapsto [z]\in Z^{**}_{p,\ell_q}/Z_{p,\ell_q}$ is the standard quotient map. Then, for some $C>0$, it follows that  \[\Big\|\sum_{j=1}^kz_{n_j}\Big\|\geq Ck^{1/p},\]
 for all $k\in\N$, all $(\eps_j)_{j=1}^k\in \{-1,1\}^k$ and all $n_1<\ldots<n_k\in \N$. The result  follows immediately from Corollary \ref{CorNonColapsingEmbGraphs} applied to $(z_n)_{n\in\N}$.
 
  (ii) and (iii)  Since $Z^{***}_{p,\ell_q}=Z^*_{p,\ell_q}\oplus \ell_{q'}$ (resp. $Z^{***}_{p,c_0}=Z^*_{p,c_0}\oplus \ell_{1}$), let  $(z_n)_{n\in\N}$ be a sequence in $Z^{***}_{p,\ell_q}$ (resp. $Z^{***}_{p,c_0}$) which is equivalent to the standard  $\ell_{q'}$-basis (resp. $\ell_1$-basis) and apply Corollary  \ref{CorNonColapsingEmbGraphs}.
  
  (iv) Since $Z^{***}_{p,c_0}=Z^{*}_{p,c_0}\oplus \ell_1$, it follows that $Z^{(4)}_{p,c_0}=Z^{**}_{p,c_0}\oplus \ell_\infty$. Let  $(z_n)_{n\in\N}$ be a sequence in $Z^{(4)}_{p,\ell_q}$   equivalent to the standard  $\ell_{1}$-basis and apply Corollary  \ref{CorNonColapsingEmbGraphs}.
\end{proof}

If one wants to obtain a strengthening of Theorem \ref{TheoremLancienRajaGEN} to some class of non-quasi-reflexive spaces, a natural strategy is to look for  weakenings for the property $\dim(X^{**}/X)<\infty$, e.g., $X$ is complemented in $X^{**}$, $X^{**}/X$ is reflexive, $X^{**}/X$ is AUSable, etc. Corollary \ref{CorEmbGraphsLindenstraussSpaces} gives us counter-examples for some of those weakenings. 

\begin{cor}\label{CorCorCor}
Let $p\in (1,\infty)$. The following holds.
\begin{enumerate}[(i)]
\item \label{CorXAUSCompBiDualReflAUS} For any $q\in (1,\infty)$, there exists a $p$-AUSable  Banach space $X$ which is complemented in its bidual,  $X^{**}/X$ is reflexive and $q$-AUSable, but   $X$ has $\coKR(q)$.
\item \label{CorXAUSBiDualSuperAUS}
 There exists a $p$-AUSable  Banach space $X$ so that $X^{**}/X$ is $q$-AUSable for all $q\in (1,\infty)$, but $X$ has $\coKR(1)$.
\item \label{CorXAUSBiDualComplVerynotKRp}
There exists a $p$-AUSable  Banach space $X$ complemented in its bidual, but so that $X$ has $\coKR(1)$.
\end{enumerate}\qed
\end{cor}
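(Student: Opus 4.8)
\textbf{Proof plan for Corollary \ref{CorCorCor}.}

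The idea is that all three items are obtained by feeding the computations of Corollary \ref{CorEmbGraphsLindenstraussSpaces} into the relation ``$\kappa(X,\ell)\geq 1/p$ implies $X$ has $\coKR(p)$'' from Proposition before Corollary \ref{CorNonColapsingEmbGraphs}, together with the known structural properties of the Lindenstrauss spaces $Z_{p,X}$ and their iterates recorded at the start of this section. The common mechanism: $Z_{p,X}$ is $p$-AUSable (by \cite{CauseyLancien2017}, Theorem 2.1), $Z^{**}_{p,X}/Z_{p,X}=X$ (by \cite{Lindenstrauss1971}), and $Z^{***}_{p,X}=Z^*_{p,X}\oplus X^*$, so that the even iterated duals of $Z_{p,X}$ can be made to contain nice spreading sequences by choosing $X$ appropriately, and Corollary \ref{CorNonColapsingEmbGraphs} then converts a spreading $\ell_q$-sequence in some $X^{(2\ell)}$ into the lower bound $\kappa(Z_{p,X},\ell+1)\geq 1/q$, hence $\coKR(q)$.

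\emph{Item \eqref{CorXAUSCompBiDualReflAUS}.} Take $X=Z_{p,\ell_q}$ for the given $q\in(1,\infty)$. Then $X$ is $p$-AUSable; $X^{**}/X=\ell_q$ is reflexive and $q$-AUSable; and $X$ is complemented in $X^{**}$ because $X^{***}=X^*\oplus\ell_{q'}$ shows $X^*$ is complemented in $X^{***}$ and $X$ is reflexive-modulo... more directly, $Z_{p,\ell_q}$ is $w^*$-complemented: the decomposition $Z^{***}_{p,\ell_q}=Z^*_{p,\ell_q}\oplus\ell_{q'}$ dualizes to a projection of $Z^{**}_{p,\ell_q}=(Z^*_{p,\ell_q})^*$ onto $Z_{p,\ell_q}$ (this is Lindenstrauss's observation; I would cite \cite{Lindenstrauss1971}, Corollary 1). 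Finally $\kappa(X,2)\geq 1/q$ by Corollary \ref{CorEmbGraphsLindenstraussSpaces}(i), so $X$ has $\coKR(q)$ by the proposition preceding Corollary \ref{CorNonColapsingEmbGraphs}.

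\emph{Items \eqref{CorXAUSBiDualSuperAUS} and \eqref{CorXAUSBiDualComplVerynotKRp}.} For \eqref{CorXAUSBiDualSuperAUS}, take $X=Z_{p,c_0}$: then $X^{**}/X=c_0$ is $q$-AUSable for every $q\in(1,\infty)$, while $\kappa(X,3)=1$ by Corollary \ref{CorEmbGraphsLindenstraussSpaces}(iv), hence $\kappa(X)=1$ and $X$ has $\coKR(1)$ (indeed $\coKR(q)$ fails to be avoidable for any $q$, since $1>1/q$). For \eqref{CorXAUSBiDualComplVerynotKRp}, one still needs a $p$-AUSable space that is complemented in its bidual yet has $\coKR(1)$; here I would take an iterated Lindenstrauss construction. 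Concretely, set $X=Z_{p,Z_{r,c_0}}$ for some $r\in(1,\infty)$: then $X$ is $p$-AUSable, $X^{**}/X=Z_{r,c_0}$ (not reflexive, which is the price), and chasing the dual formula $X^{***}=X^*\oplus (Z_{r,c_0})^*$ twice produces in $X^{(4)}$ a copy of $(Z_{r,c_0})^{**}\oplus(\text{something})$ which contains an $\ell_1$-spreading model coming from the $\ell_\infty$ summand; complementation of $X$ in $X^{**}$ follows from the same $w^*$-projection argument as in \eqref{CorXAUSCompBiDualReflAUS} applied to the outer Lindenstrauss layer. Then Corollary \ref{CorNonColapsingEmbGraphs} gives $\kappa(X,3)\geq 1$, so $X$ has $\coKR(1)$.

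\textbf{Main obstacle.} The genuinely delicate point is \emph{not} the $\coKR$ estimate — that is handed to us by Corollary \ref{CorEmbGraphsLindenstraussSpaces} and Corollary \ref{CorNonColapsingEmbGraphs} — but verifying simultaneously all the ``good'' structural properties (complementation in the bidual, reflexivity or $q$-AUSability of $X^{**}/X$, $p$-AUSability of $X$) for the right choice of the input space, and in particular checking that the iterated-duals bookkeeping $X^{(n)}=\cdots$ actually places the required $\ell_q$- or $\ell_1$-spreading sequence in an \emph{even}-order dual. I expect the write-up to mostly consist of: (1) selecting the correct $X$ in each case from the Lindenstrauss family (or an iterate thereof), (2) quoting $p$-AUSability from \cite{CauseyLancien2017} and the bidual/triple-dual identities from \cite{Lindenstrauss1971}, (3) deducing complementation from the dual of the triple-dual splitting, and (4) invoking Corollary \ref{CorEmbGraphsLindenstraussSpaces} for the $\kappa$-bound and the proposition before Corollary \ref{CorNonColapsingEmbGraphs} to conclude $\coKR$.
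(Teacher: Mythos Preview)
Your approach to items \eqref{CorXAUSCompBiDualReflAUS} and \eqref{CorXAUSBiDualSuperAUS} is correct and is exactly what the paper intends: take $X=Z_{p,\ell_q}$ and $X=Z_{p,c_0}$ respectively, and read off $\coKR(q)$ and $\coKR(1)$ from Corollary~\ref{CorEmbGraphsLindenstraussSpaces}(i) and (iv) together with the ``supremums attained'' clause.

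For item \eqref{CorXAUSBiDualComplVerynotKRp}, however, you miss the simple example and your substitute has an error. The intended space is $X=Z^*_{p',c_0}$, where $p'$ is the conjugate of $p$. This is a dual space, hence automatically complemented in its bidual; it is $p$-AUSable by the Causey--Lancien result quoted just before Corollary~\ref{CorEmbGraphsLindenstraussSpaces}; and Corollary~\ref{CorEmbGraphsLindenstraussSpaces}(iii) (applied with $p'$ in place of $p$) gives $\kappa(X,2)=1$, hence $\coKR(1)$. That is the whole argument, and it is why the paper can write ``\qed''.

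Your proposed $X=Z_{p,Z_{r,c_0}}$ is unnecessarily elaborate, and the bookkeeping is off: you claim an $\ell_\infty$ summand appears in $X^{(4)}$, but $X^{(4)}=X^{**}\oplus Z_{r,c_0}^{**}$ and $Z_{r,c_0}^{**}$ is separable with $Z_{r,c_0}^{**}/Z_{r,c_0}=c_0$, so it contains no $\ell_1$. The $\ell_\infty$ summand only shows up in $X^{(6)}=X^{(4)}\oplus Z_{r,c_0}^{(4)}=X^{(4)}\oplus Z_{r,c_0}^{**}\oplus\ell_\infty$. Your construction is therefore salvageable (one gets $\kappa(X,4)=1$ via Corollary~\ref{CorNonColapsingEmbGraphs} with $\ell=3$), but you also have to justify complementation of $Z_{p,Z_{r,c_0}}$ in its bidual when the input $Z_{r,c_0}$ is not reflexive --- the ``same $w^*$-projection argument'' you invoke is not automatic there. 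Using the dual space $Z^*_{p',c_0}$ sidesteps all of this.
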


Corollary \ref{CorCorCor}\eqref{CorXAUSCompBiDualReflAUS} suggests that in order to obtain a non-quasi-reflexive Banach space $X$ which has $\KR(p)$, one should at least restrict themselves to $p$-AUSable Banach spaces $X$ which are complemented in their biduals and so that $X^{**}/X$ is  $p$-AUSable. That is, simply requiring   $X^{**}/X$ to be $q$-AUSable for some $q<p$ is not enough. Unfortunately, as we see below, those requirements are still not enough (at least  if one does   require $X^{**}/X$ to be reflexive).

Fix $p\in (1,\infty)$. Given a separable Banach space $X$,  define inductively a  finite sequence of  Banach spaces with separable dual $(E_{i}(p,X))_{i=0}^\infty$ by setting  \[E_{0}(p,X)=Z_{p,X}\ \text{ and }\    E_{i+1}(p,X)= E_{0}(p,E_{i}(p,X)),\] 
for all $i\in\N$. We now list  some properties of $(E_{i}(p,X))_{i=0}^\infty$ which follow straightforwardly from the results in \cite{Lindenstrauss1971} and \cite{LancienRaja2017} mentioned above. In what follows,  $p'$ denotes the conjugate of $p$.

\begin{enumerate}[(i)]
\item $E_{i}(p,X)$ is $p$-AUSable, for all $i\in\N\cup\{0\}$.
\item\label{ItemEStarAUS} $E^*_{i}(p,X)$ is $p'$-AUSable, for all $i\in\N\cup\{0\}$.
\item $E_{i}^{**}(p,X)/E_{i}(p,X)=E_{i-1}(X,p)$, for all $i\in\N$.
\item\label{ItemLindCompSpaces} $E_{i}^{***}(p,X)=E^*_{i}(p,X)\oplus E^*_{i-1}(p,X)$, for all $i\in\N$.
\end{enumerate}
Some less trivial properties of the spaces $(E_{i}(p,X))_{i=0}^\infty$  will be needed. For this, we have the next lemma.

\begin{lemma}\label{LemmaEstarLinden}
Let $p,p'\in (1,\infty)$, with $1/p+1/p'=1$,  let $X$ be a separable Banach space and consider the family $(E_{i}(p,X))_{i=0}^\infty$ defined above. Fix $\ell\in\N$. The following holds.
\begin{enumerate}[(i)]\setcounter{enumi}{4}
\item\label{ItemEstarSeparable} $E_{\ell}^{(2\ell+2)}(p,X)$ is separable,
\item\label{ItempAUSHighDuals} $ E_{\ell}^{(2\ell+1)}(p,X) $ is $p'$-AUSable, and 
\item\label{ItemOplusSumAUSXdual} $E_{\ell}^{(2\ell+3)}(p,X)= X^*\oplus Z$, for some $p'$-AUSable Banach space $Z$.
\end{enumerate}
\end{lemma}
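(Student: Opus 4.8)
The plan is to prove all three statements simultaneously by induction on $\ell$, exploiting the structural identities (iii) and (iv) listed just above the lemma together with the renorming facts (i), (ii). Write $E_i = E_i(p,X)$ throughout. The base case $\ell=0$ reduces to claims about $E_0 = Z_{p,X}$ and its first few duals: $E_0^{(2)}/E_0 = X$ so $E_0^{(2)}$ has separable dual when $X$ does; by (iv), $E_0^{(3)} = E_0^* \oplus X^*$, which gives both (v) ($E_0^{(2)}$ separable, since its dual $E_0^{(3)}$ is separable) and (vii) ($E_0^{(3)} = X^* \oplus E_0^*$ with $E_0^*$ being $p'$-AUSable by (ii)); and (vi) is just the statement that $E_0^* = E_0^{(1)}$ is $p'$-AUSable, which is (ii). So the base case is essentially a bookkeeping exercise in the Lindenstrauss identities.

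For the inductive step, suppose the lemma holds for $\ell-1$ applied to any separable Banach space, and fix $\ell$. The key observation is that $E_\ell = E_0(p, E_{\ell-1})$, so the higher duals of $E_\ell$ are governed by the higher duals of $E_{\ell-1}$ shifted by two. Concretely, from $E_\ell^{(2)}/E_\ell = E_{\ell-1}$ one gets, by iterating the quotient/duality relationship, that $E_\ell^{(2j+2)}$ is built from $E_{\ell-1}^{(2j)}$, and from (iv) applied to $E_\ell$ that $E_\ell^{(3)} = E_\ell^* \oplus E_{\ell-1}^*$. Iterating (iv) one level further, $E_\ell^{(2j+3)} = E_\ell^{(2j+1)} \oplus E_{\ell-1}^{(2j+1)}$ for appropriate $j$. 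Thus:
\begin{itemize}
\item For (v): $E_\ell^{(2\ell+2)}$ is separable iff its dual $E_\ell^{(2\ell+3)} = E_\ell^{(2\ell+1)} \oplus E_{\ell-1}^{(2\ell+1)}$ is separable. The second summand is $E_{\ell-1}^{(2(\ell-1)+3)}$, which by the inductive form of (vii) equals $X^* \oplus Z$ with $Z$ being $p'$-AUSable hence separable, so it is separable; the first summand $E_\ell^{(2\ell+1)}$ needs to be handled by tracking it back to $E_{\ell-1}$ and ultimately to $E_0$-type spaces, whose relevant duals are separable by the Lindenstrauss construction. One organizes this by proving separability of $E_\ell^{(2\ell+1)}$ as an auxiliary claim inside the same induction.
\item For (vi): $E_\ell^{(2\ell+1)}$, via the shift, corresponds to $E_{\ell-1}^{(2\ell-1)} = E_{\ell-1}^{(2(\ell-1)+1)}$, which is $p'$-AUSable by the inductive form of (vi); and one checks that the operations relating them (taking the bidual modulo the space, which inserts a copy of $E_{\ell-1}$) preserve $p'$-AUSability, using (ii) and the fact that $p'$-AUSability passes to $\ell_p$-sums / is stable under the relevant constructions.
\item For (vii): $E_\ell^{(2\ell+3)} = E_\ell^{(2\ell+1)} \oplus E_{\ell-1}^{(2\ell+1)}$; the first summand is $p'$-AUSable by (vi) just established, and the second is $E_{\ell-1}^{(2(\ell-1)+3)} = X^* \oplus Z'$ with $Z'$ being $p'$-AUSable by the inductive form of (vii). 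Hence $E_\ell^{(2\ell+3)} = X^* \oplus (E_\ell^{(2\ell+1)} \oplus Z')$, and $E_\ell^{(2\ell+1)} \oplus Z'$ is $p'$-AUSable (finite direct sums of $p'$-AUSable spaces are $p'$-AUSable), giving the required form with $Z = E_\ell^{(2\ell+1)} \oplus Z'$.
\end{itemize}

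The main obstacle I anticipate is getting the bookkeeping of the duality/quotient identities exactly right: one must carefully verify that $E_\ell^{(2j)}$ and $E_{\ell-1}^{(2j-2)}$ are related by exactly the pattern claimed (taking $A^{**}/A = B$ repeatedly), since an off-by-two error in the indices would derail the whole induction, and one must confirm that each structural step only ever inserts summands of the form $E_i^{(m)}$ that are either $p'$-AUSable (hence separable) by the earlier items or separable by the basic Lindenstrauss theorem. A secondary point requiring care is the stability of $p'$-AUSability: one needs that $p'$-AUSability is preserved under finite $\ell_1$- or $\ell_\infty$-direct sums and that if $A^{**}/A$ is $p'$-AUSable and $A$ is $p'$-AUSable then $A^{**}$ is (this is where the Lindenstrauss-type renorming interacts with the three-space-like behavior); all of these are either standard or follow from the Causey--Lancien renorming results already cited, but they should be invoked explicitly. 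Once these are in place, the induction closes routinely.
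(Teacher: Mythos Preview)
Your overall plan---induction on $\ell$ exploiting the direct-sum identity (iv) and its iterates---is exactly the paper's approach, and your arguments for (vi) and (vii) are essentially right (though the paper organizes them cleanly by strengthening (v) and (vi) to ``for all $j\in\{0,\dots,\ell\}$'' and running an inner induction on $j$, which you should also do rather than leave as an ``auxiliary claim'').

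However, your argument for (v) has a genuine gap. You try to deduce that $E_\ell^{(2\ell+2)}$ is separable from separability of its dual $E_\ell^{(2\ell+3)}$; you even write this in the base case (``$E_0^{(2)}$ separable, since its dual $E_0^{(3)}$ is separable''). But $E_\ell^{(2\ell+3)}$ contains $X^*$ as a direct summand---indeed that is precisely statement (vii)---and the lemma only assumes $X$ is separable, not $X^*$. So the odd-order duals need not be separable at all, and your route to (v) collapses. (You in fact flag this yourself with ``when $X$ does,'' but then proceed as if that hypothesis were available.)

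The fix, which is what the paper does, is to stay on the \emph{even} side: from $E_\ell^{***}=E_\ell^*\oplus E_{\ell-1}^*$ one gets
\[
E_\ell^{(2j+2)}=E_\ell^{(2(j-1)+2)}\oplus E_{\ell-1}^{(2(j-1)+2)},
\]
and both summands are separable by the inner/outer induction hypotheses; the base of the inner induction is $E_\ell^{(2)}$, which is separable because $E_\ell^{**}/E_\ell=E_{\ell-1}$ and $E_\ell$ are both separable (a subspace and its quotient separable imply the space is). No appeal to $X^*$ is needed. Relatedly, your ``three-space'' remark about deducing $p'$-AUSability of $A^{**}$ from that of $A$ and $A^{**}/A$ is unnecessary and should be dropped: everything reduces to finite direct sums via (iv), and $p'$-AUSability of a finite direct sum is all you need.
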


\begin{proof}
The proof is a simple induction on $\ell$. First of all, notice that  \eqref{ItemEstarSeparable} and \eqref{ItempAUSHighDuals} are equivalent to  
\begin{enumerate}[(i)]\setcounter{enumi}{7}
\item\label{ItemEstarSeparablePRIMA} $E_{\ell}^{(2j+2)}(p,X)$ is separable, for all $j\in \{0,\ldots, \ell\}$, and
\item\label{ItempAUSHighDualsPRIMA} $ E_{\ell}^{(2j+1)}(p,X) $ is $p'$-AUSable, for all $j\in \{0,\ldots,\ell\}$,
\end{enumerate}
respectively. We shall now prove   \eqref{ItemOplusSumAUSXdual},  \eqref{ItemEstarSeparablePRIMA} and  \eqref{ItempAUSHighDualsPRIMA} for $\ell$.

 Say $\ell=0$. Since both  $E_{0}(p,X)$ and $E^{**}_{0}(p,X)/E_{0}(p,X)=X $  are separable, so is   $E^{**}_{0}(p,X)$. As $E_{0}(p,X)=Z_{p,X}$, $E^*_{0}(p,X)$ is $p'$-AUSable and $E^{***}_{0}(p,X)=E^{*}_{0}(p,X)\oplus X^*$. So, the result follows for $\ell=0$. Say the result holds for $\ell-1$ and let us show that it holds for $\ell$. 

 As in the case $\ell=0$, $E_{\ell}^{**}(p,X)$ is clearly separable. Hence, since \eqref{ItemEStarAUS}  implies that  \eqref{ItempAUSHighDualsPRIMA}  holds for $j=0$, it follows that  \eqref{ItemEstarSeparablePRIMA} and  \eqref{ItempAUSHighDualsPRIMA} hold for $j=0$ and $\ell$. An induction within the induction now takes place. Fix $j\in\{1,\ldots,\ell\}$ and assume that  \eqref{ItemEstarSeparablePRIMA} and  \eqref{ItempAUSHighDualsPRIMA} hold for $j-1$. By  \eqref{ItemLindCompSpaces},
\[E_{\ell}^{(2j+2)}(p,X)=E^{(2(j-1)+2)}_{\ell}(p,X)\oplus E^{(2(j-1)+2)}_{\ell-1}(p,X).\]
The induction hypotheses imply that  both $E^{(2(j-1)+2)}_{\ell}(p,X)$ and $ E^{(2(j-1)+2)}_{\ell-1}(p,X)$ are separable,  so $E_{\ell}^{(2j+2)}(p,X)$ is also separable and \eqref{ItemEstarSeparablePRIMA} holds. By  \eqref{ItemLindCompSpaces},
\[E_{\ell}^{(2j+1)}(p,X)=E^{(2(j-1)+1)}_{\ell}(p,X)\oplus E^{(2(j-1)+1)}_{\ell-1}(p,X).\]
By the induction hypotheses,  the spaces $E^{(2(j-1)+1)}_{\ell}(p,X)$ and $ E^{(2(j-1)+1)}_{\ell-1}(p,X)$ are $p'$-AUSable, so $E_{\ell}^{(2j+1)}(p,X)$ is $p'$-AUSable. This finishes the second induction and it  shows that  \eqref{ItemEstarSeparablePRIMA} and  \eqref{ItempAUSHighDualsPRIMA} hold for all $j\leq \ell$. 

Using    \eqref{ItemLindCompSpaces} once again, it follows that  
\[E_{\ell}^{(2\ell+3)}(p,X)=E^{(2\ell+1)}_{\ell}(p,X)\oplus E^{(2(\ell-1)+3)}_{\ell-1}(p,X).\]
Since \eqref{ItempAUSHighDualsPRIMA} holds, $E^{(2\ell+1)}_{\ell}(p,X)$ is $p'$-AUSable. By our (first) induction hypothesis, $E^{(2(\ell-1)+3)}_{\ell-1}(p,X)=X^*\oplus Z$, for some $p'$-AUSable Banach space $Z$. Therefore,  \eqref{ItemOplusSumAUSXdual} holds.
\end{proof}

\begin{cor}\label{CorSpaceWithDualsAUSComplemBidualBUTBad}
Let $p\in (1,\infty)$ and $X$ be a dual space with separable predual. For all $\ell\in\N$, there exists a dual Banach space $E\coloneqq E(p,\ell,X)$ with the following properties
\begin{enumerate}[(i)]
\item $E^{(2\ell+1)}$ is separable,
\item $E^{(2\ell)}$ is $p$-AUSable, and 
\item  $E^{(2\ell+2)}=X\oplus Z$, for some $p$-AUSable Banach space $Z$.\end{enumerate}
In particular, for all $\ell\in\N$, there exists a Banach space $E$ which is complemented in its bidual, $E^{(2\ell)}$ is $p$-AUSable and $\kappa(E)=1$.
\end{cor}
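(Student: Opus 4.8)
The plan is to deduce Corollary~\ref{CorSpaceWithDualsAUSComplemBidualBUTBad} from Lemma~\ref{LemmaEstarLinden} together with the Lindenstrauss-space facts (i)--(iv) recorded before that lemma, and to obtain the ``In particular'' statement by feeding the resulting space into Corollary~\ref{CorNonColapsingEmbGraphs} (equivalently, Corollary~\ref{CorEmbGraphsLindenstraussSpaces}(iv) with a general $X$ in place of $c_0$). First I would set $E\coloneqq E_\ell(p,X)$ (the $\ell$-th iterate of the Lindenstrauss construction applied to the given dual space $X$ with separable predual). Since $X$ has a separable predual, $X$ is a separable dual space; hence $E=E_\ell(p,X)$ is itself a dual space --- one should note that the Lindenstrauss space $Z_{p,Y}=\ker(Q)$, being weak$^*$-closed in the dual space $Y^{**}$ when $Y$ is a dual space (or more directly, being the annihilator of a subspace of a predual), is a dual space, and this passes through the iteration. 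Then properties (i), (ii), (iii) of the corollary are exactly the translations of Lemma~\ref{LemmaEstarLinden}\eqref{ItemEstarSeparable}, \eqref{ItempAUSHighDuals} (or rather the $p$-AUSability of the even dual $E^{(2\ell)}_\ell(p,X)=E_\ell(p,X)^{**}$-type statement coming from property (i) of the list before the lemma applied to the relevant iterate), and \eqref{ItemOplusSumAUSXdual}; I would simply quote each of these and rename indices. In particular, from \eqref{ItemOplusSumAUSXdual} we get $E^{(2\ell+2)}=X^*\oplus Z$ with $Z$ being $p'$-AUSable --- wait, here one must be careful about the prime: since $X$ is a dual space, write $X=W^*$ for a separable $W$, apply the construction to $W$ instead, and use that the conjugate exponent appears twice along the way so that the final assembled space is $p$-AUSable; I would choose the exponent bookkeeping (running the iteration with $p'$ at the appropriate parity, or starting from the predual $W$) so that $E^{(2\ell+2)}=X\oplus Z$ with $Z$ $p$-AUSable exactly as stated.

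For the ``In particular'' clause: given such an $E$, note $E^{(2\ell+2)}=X\oplus Z$ contains a complemented copy of $X$, hence $E^{(2\ell+2)}=(E^{(2\ell)})^{**}$. Now $E^{(2\ell+2)}$ contains $X$; but to run Corollary~\ref{CorNonColapsingEmbGraphs} I need an even dual of $E$ itself that fails the alternating Banach--Saks property as badly as possible, i.e.\ contains an $\ell_1$-spreading model, equivalently (by Beauzamy, as in Corollary~\ref{CorHighDualsWithoutABSImpliesCoKR1}) does not have the alternating Banach--Saks property. The cleanest route: by property \eqref{ItemLindCompSpaces}-style computations one more step up, $E^{(2\ell+3)}=E^{(2\ell+1)}\oplus(\text{something})$ and iterating the ``$\oplus$'' splitting shows $E^{(2\ell+4)}=E^{(2\ell+2)}\oplus(\text{a dual of a }p'\text{-AUSable space, hence reflexive-ish})\supseteq X^{**}\oplus\cdots$; in any case some fixed even dual $E^{(2m)}$ of $E$ contains $\ell_\infty$ (this is the phenomenon exploited in Corollary~\ref{CorEmbGraphsLindenstraussSpaces}(iv): once $\ell_1$ appears as a complemented summand of an odd dual, $\ell_\infty$ appears in the next even dual), and $\ell_\infty$ certainly contains an isometric $\ell_1$ and in particular a bounded sequence $(z_n)$ with $\|\sum_{j=1}^k\eps_j z_{n_j}\|\geq k$ for all signs $(\eps_j)$ and all $n_1<\dots<n_k$. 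Applying Corollary~\ref{CorNonColapsingEmbGraphs} with $p=1$ to this sequence gives $\kappa(E,m+1)\geq 1$, whence $\kappa(E)=1$ (the reverse inequality $\kappa(E)\leq 1$ is immediate from the definition, where $\alpha$ ranges over $[0,1]$). Finally, $E$ is complemented in $E^{**}$ because $E$ is a dual space and, more concretely, because the Lindenstrauss iteration yields $E^{(2)}/E$ equal to a space whose dual structure makes $E$ norm-one complemented in $E^{**}$ --- alternatively one can simply invoke that every $E_i(p,\,\cdot\,)$ is complemented in its bidual, which is a standard property of these spaces following from \eqref{ItemLindCompSpaces} (the bidual splits off the canonical image).

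The main obstacle I anticipate is \textbf{not} any single hard estimate --- all the analytic content is already packaged in Lemma~\ref{LemmaEstarLinden} and Corollary~\ref{CorNonColapsingEmbGraphs} --- but rather the \emph{bookkeeping}: getting the parities of the iterated duals and the conjugate exponents to line up so that the three listed properties come out with exactly the indices ``$2\ell+1$'', ``$2\ell$'', ``$2\ell+2$'' and with the \emph{unprimed} $p$ (and unprimed $X$, not $X^*$) in the statement. The trick is to observe that $X$ is assumed to be a \emph{dual space with separable predual} $W$, so one actually applies the construction to $W$ (or to $W$ with exponent $p'$), uses Lemma~\ref{LemmaEstarLinden} for $W$, and then dualizes once; because $\ell\mapsto$ (number of primes accumulated) is tracked explicitly in \eqref{ItemLindCompSpaces} and in the proof of Lemma~\ref{LemmaEstarLinden}, this is purely mechanical but must be done carefully. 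A secondary point to verify cleanly is that $E$ is genuinely a \emph{dual} Banach space (needed for the statement ``there exists a dual Banach space $E$''): this follows since $Z_{p,Y}$ is the kernel of a weak$^*$-continuous operator when $Y$ is a dual space with the construction performed on its predual, so dual-ness is preserved under the iteration $E_{i+1}=E_0(p,E_i)$; I would state this as a one-line remark. Everything else is quotation.
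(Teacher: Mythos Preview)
Your overall strategy---invoke Lemma~\ref{LemmaEstarLinden} for items (i)--(iii) and then Corollary~\ref{CorNonColapsingEmbGraphs} for the ``in particular''---is exactly the paper's. The execution, however, is muddled in two places where the paper is crisp.

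\textbf{Choice of $E$.} Your initial choice $E=E_\ell(p,X)$ does not give the stated indices: Lemma~\ref{LemmaEstarLinden} then yields that $E^{(2\ell+2)}$ is separable, $E^{(2\ell+1)}$ is $p'$-AUSable, and $E^{(2\ell+3)}=X^*\oplus Z$---everything is shifted by one and primed. You notice this and say ``apply to the predual $W$, switch to $p'$, dualize once,'' which is correct but left vague. The paper simply sets
\[
E\coloneqq E_\ell(p',X_*)^*,
\]
where $X_*$ is the separable predual and $p'$ the conjugate exponent. Then $E^{(n)}=E_\ell(p',X_*)^{(n+1)}$, and Lemma~\ref{LemmaEstarLinden}\eqref{ItemEstarSeparable}--\eqref{ItemOplusSumAUSXdual} (with $p'$ in place of $p$, so $p''=p$) give exactly (i), (ii), (iii). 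This choice also makes $E$ a dual space by definition, so your side argument that ``$Z_{p,Y}$ is a dual space when $Y$ is a dual'' is unnecessary (and is not obviously correct as stated).

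\textbf{The ``in particular''.} You climb to $E^{(2\ell+4)}$ looking for $\ell_\infty$. The paper avoids this entirely by taking $X=\ell_1$ (which has separable predual $c_0$). Then item (iii) gives $E^{(2\ell+2)}=\ell_1\oplus Z$, so $E^{(2(\ell+1))}$ already contains a sequence $(z_n)$ with $\|\sum_{j=1}^k\eps_j z_{n_j}\|\geq k$ for all signs and all $n_1<\dots<n_k$. Since $E^{(2(\ell+1)-1)}=E^{(2\ell+1)}$ is separable by (i), Corollary~\ref{CorNonColapsingEmbGraphs} applies with $p=1$ and gives $\kappa(E,\ell+2)\geq 1$, hence $\kappa(E)=1$. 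The complementation of $E$ in $E^{**}$ follows from $E$ being a dual space (Dixmier projection), as you note.
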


\begin{proof}
Fix $p\in(1,\infty)$, $X$ and $\ell\in\N$. Let $p'$ be the conjugate of $p$ and let $X_*$ be a separable predual of $X$. Let $E=E^*_\ell(p',X_*)$. The result follows from Lemma \ref{LemmaEstarLinden}.

 For the last statement, simply take $E=E(p,\ell,\ell_1)$ and apply Corollary \ref{CorNonColapsingEmbGraphs}.
\end{proof}

\section{Weakly sequentially homeomorphic  Lipschitz equivalence}\label{SectionWSCLipEquiv}

The purpose of this section is to show that the concept of coarse Lipschitz embeddability by weakly sequentially continuous maps is strictly weaker than isomorphic embeddability (Theorem \ref{ThmWSCCoarseEmbWeakerIsomorphcEmb}). For that, we show that the famous example of (non-separable) Lipschitz isomorphic spaces which are not linearly isomorphic constructed in \cite{AharoniLindenstrauss1978} is also an example of non-isomorphic spaces which are weakly sequentially homeomorphically Lipschitzly equivalent.

Let $I$ be a set. Denote by $c_{00}(I)$ the set of all finitely supported maps $I\to \R$ and let $c_0(I)$ be the completion of $c_{00}(I)$ endowed with the supremum norm. 

\begin{prop}\label{PropWSC}
There exist a weakly sequentially continuous Lipschitz map $f:c_0(2^{\aleph_0})\to \ell_\infty$ and a bounded linear map $q:\ell_\infty\to c_0(2^{\aleph_0})$ so that $q\circ f=\Id_{c_0(2^{\aleph_0})}$.
\end{prop}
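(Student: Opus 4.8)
The plan is to recall the classical construction of Aharoni--Lindenstrauss showing that $c_0(\Gamma)$ with $\Gamma = 2^{\aleph_0}$ is Lipschitz-isomorphic to a subspace of $\ell_\infty$ that it does not linearly embed into, and then to check that the relevant maps can be taken weakly sequentially continuous. Concretely, one enumerates $\Gamma$ as the set of all sequences $\gamma \in \{0,1\}^{\N}$ and builds, for each $\gamma$, a ``staircase'' path in $c_0$ (or in $\ell_\infty$) that separates $\gamma$ from all of its finite truncations; the map $f : c_0(\Gamma) \to \ell_\infty$ is assembled from countably many coordinate functionals that read off these paths, and the linear projection $q : \ell_\infty \to c_0(\Gamma)$ is the obvious one that recovers the original coordinates, so that $q \circ f = \Id_{c_0(\Gamma)}$ and in particular $\Lip(f)\,\Lip(q) \geq 1$ forces $f$ to be a Lipschitz embedding.

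First I would fix the standard data: a bijection between $\Gamma$ and a set of branches of the infinite binary tree $\{0,1\}^{<\N}$, and for each node $t$ of the tree a coordinate of $\ell_\infty$; then $f(x)$ at the coordinate corresponding to $t$ is declared to be a fixed Lipschitz function of the finitely many coordinates $x(\gamma)$ with $\gamma$ passing through $t$ — for instance a truncated/clamped linear combination chosen so that the supremum over all nodes stays finite and controlled by $\|x\|$. The point of using $\ell_\infty$ rather than $c_0(\Gamma)$ directly is exactly that these countably-many node-coordinates need not tend to zero. The map $q$ is linear: $q(y)(\gamma)$ is obtained by reading a distinguished sub-collection of node-coordinates of $y$ along the branch $\gamma$ and taking a limit/average that inverts $f$ on the range; boundedness of $q$ is immediate from the definitions and $q\circ f=\Id$ is a direct computation.

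The main obstacle — and the only genuinely new content beyond the Aharoni--Lindenstrauss example — is verifying weak sequential continuity of $f$. Here I would use that a sequence $(x_n)_n$ in $c_0(\Gamma)$ weakly converging to $x$ is bounded and converges coordinatewise, i.e. $x_n(\gamma) \to x(\gamma)$ for every $\gamma \in \Gamma$; since each node-coordinate of $f(x_n)$ depends on only finitely many $x_n(\gamma)$ through a (uniformly) Lipschitz, hence continuous, function, each coordinate of $f(x_n)$ converges to the corresponding coordinate of $f(x)$, and the sequence $(f(x_n))_n$ is bounded in $\ell_\infty$. A bounded, coordinatewise-convergent sequence in $\ell_\infty$ need not be weakly convergent, so the real work is to choose the functions defining $f$ so that the range lands in a subspace of $\ell_\infty$ on which coordinatewise convergence of bounded sequences does upgrade to weak convergence — e.g. arranging that the relevant coordinates, for each fixed $x$, are eventually locked (so only finitely many node-coordinates genuinely move along the sequence, up to small error), which is exactly what the finite-support of each $x_n$ combined with the tree structure buys. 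I expect this ``finite-support propagates through the tree'' bookkeeping to be the technical heart of the argument; once it is in place, weak sequential continuity of $f$ follows, and weak-to-weak$^*$ (or weak) continuity of the linear bounded map $q$ is automatic, completing the proof.
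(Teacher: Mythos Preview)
Your proposal has a genuine gap at the very point you flag as ``the technical heart of the argument.'' First, the construction you sketch is not well-defined: through any node $t$ of the binary tree there pass $2^{\aleph_0}$ branches $\gamma$, not finitely many, so the coordinate $f(x)(t)$ cannot be a function of ``the finitely many $x(\gamma)$ with $\gamma$ passing through $t$.'' More seriously, even granting some coordinatewise-continuous map, your plan for weak sequential continuity is to force the range into a subspace of $\ell_\infty$ on which bounded pointwise convergence upgrades to weak convergence. You give no mechanism for this, and the ``finite-support propagates through the tree'' heuristic does not survive contact with the fact that $\ell_\infty^*$ contains purely finitely additive measures that ignore any finite set of coordinates; controlling coordinates says nothing about these functionals.

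The paper's proof takes a completely different route and this is where the real content lies. It uses the Aharoni--Lindenstrauss map built from an almost disjoint family $(A_i)_{i\in I}$ of infinite subsets of $\N$ (not a tree-node indexing), and the key step is the explicit formula
\[
\mu(f(x))=\sum_{i\in I}x(i)\,\mu(A_i)\qquad\text{for every }\mu\in\ell_\infty^*,
\]
which holds because almost disjointness makes the $A_i$ measure-theoretically disjoint for any finitely additive $\mu$ vanishing on finite sets. Weak sequential continuity is then proved by contradiction: if $|\mu(f(x_n))-\mu(f(x))|>\eps$ along a subsequence, one extracts pairwise disjoint finite index sets $F_k\subset I$ with $\sum_{i\in F_k}\mu(A_i)\geq\eps/3$, and since the corresponding unions $\bigcup_{i\in F_k}A_i$ are essentially disjoint this forces $\mu(\N)=\infty$, contradicting bounded variation. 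The linear $q$ is simply $T^{-1}\circ q_0$ where $q_0:\ell_\infty\to\ell_\infty/c_0$ is the quotient and $T e_i=q_0(\chi_{A_i})$. None of this is visible in your outline; the measure-theoretic argument against arbitrary $\mu\in\ell_\infty^*$ is precisely what your coordinatewise strategy is missing.
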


\begin{proof}
We  show that a certain map $f:c_0(2^{\aleph_0})\to \ell_\infty$ constructed in \cite{AharoniLindenstrauss1978} is weakly sequentially continuous. Precisely, let $I$ be an index set with the cardinality of the continuum and let $(A_i)_{i\in I}$ be a family on infinite subsets of $\N$ such that $A_i\cap A_j$ is finite, for all $i\neq j$ in $I$. Let $(e_i)_{i\in I}$ denote the standard unit basis of $c_0(I)$. Fix $x=\sum_{i\in I}x(i)e_i\in c_0(I)$, with $x(i)\geq 0$ for all $i\in I$, and let us define $f(x)\in \ell_\infty$.  Notice that $\{i\in I\mid x(i)\neq 0\}$ is countable. Therefore, there exists a sequence of distinct elements  $(i_n)_n$ in $I$ so that $x=\sum_{n=1}^\infty x(i_n)e_{i_n}$ and $x(i_{n+1})\leq x(i_n)$, for all $n\in \N$. Define $f(x)\in\ell_\infty$ by letting 

\[f(x)(k)=\left\{\begin{array}{ll}
x(i_1),&\  \text{ if }\ k\in A_{i_1},\\
x(i_n),&\  \text{ if }\ k\in A_{i_n}\setminus \cup_{m=1}^{n-1}A_{i_m},\\
0,& \ \text{ if } \ k\not\in \cup_{m=1}^{\infty}A_{i_m},
\end{array}\right. \]
for all $k\in\N$. One can easily verify that this definition is  independent of the choice of $(i_n)_n$. 

For an arbitrary $x=\sum_{i\in I}x(i)e_i\in c_0(I)$, define $I_+=\{i\in I\mid x(i)>0\}$ and $I_-=\{i\in I\mid x(i)<0\}$. Then, write  $x=x^+-x^-$, where $x^+=\sum_{i\in I_+}x(i)e_i$ and $x^-=-\sum_{i\in I_-}x(i)e_i$. We  set $f(x)=f(x^+)-f(x^-)$, for all $x\in c_0(I)$. This completes the definition of $f$. It was proved in \cite{AharoniLindenstrauss1978}, page 282, that $\Lip(f)\leq 2$.

The dual space $\ell_\infty^*$ is isometrically isomorphic to the space of all finitely additive finite signed measures $\mu$ on $\N$ with  bounded variation and so that every finite subset of $\N$ is $\mu$-null. The norm of an element $\mu\in\ell_\infty$ is its total variation, and the functional evaluation $\mu(\xi)$, for $\xi\in \ell_\infty$, is given by integration, i.e., $\mu(\xi)=\int \xi d\mu$ (see \cite{DunfordSchwartzBook1958Reprint1988}, page 296, Theorem 16). 

\begin{claim}\label{ClaimFormulaMu}
Let $\mu\in \ell_\infty^*$ and $x=\sum_{i\in I}x(i)e_{i}\in c_0(I)$. Then 
\[\mu(f(x))=\sum_{i\in I} x(i)\mu(A_i).\]
\end{claim}

\begin{proof}
Write $x^+=\sum_{n=1}^\infty x(i_n)e_{i_n}$, where $(i_n)_{n}$ is a sequence of distinct elements of $I$ such that $x(i_{n+1})\leq x(i_n)$, for all $n\in\N$. For each $n\in\N$, the set $A_{i_n}\cap (\cup_{m=1}^{n-1}A_{i_m})$ is finite. Hence, since finite subsets of $\N$ are $\mu$-null, it follows that
\[\mu\Big(f\Big(\sum_{n=1}^Nx(i_n)e_{i_n}\Big)\Big)=\sum_{n=1}^Nx(i_n)\mu\Big(A_{i_n}\setminus\cup_{m=1}^{n-1}A_{i_m}\Big)
=\sum_{n=1}^Nx(i_n)\mu(A_{i_n}),\]
for all $N\in\N$. Hence, since  $x^+=\lim_N\sum_{n=1}^Na_{i_n}e_{i_n}$,  the continuity of $f$ implies that  
\[\mu(f(x^+))=\sum_{n=1}^\infty x(i_n)\mu(A_{i_n})=\sum_{i\in I_+} x(i)\mu(A_i).\]
Similarly, we have that $\mu(f(x^-))=\sum_{i\in I_-} x(i)\mu(A_i)$, and the claim is proved.
\end{proof}

\begin{claim}
$f$ is weakly sequentially continuous.
\end{claim}

\begin{proof}
We only need to show that if  $(x_n)_n$ is  a weakly convergent sequence in $c_0(I)$ so that $x_n=x^+_n$, for all $n\in\N$, then $\wlim_n f(x_n)=f(\wlim_n x_n)$. Indeed, let $(x_n)_n$ be an arbitrary weak convergent sequence in $c_0(I)$, say $x=\wlim_n x_n$. Since weak convergence in bounded subsets of $c_0(I)$ is equivalent to pointwise convergence, it follows that $\wlim_nx^+_n=x^+$ and $\wlim_nx^-_n=x^-$. Therefore, if  $\wlim_nf(x^+_n)=f(x^+)$ and $\wlim_nf(x^-_n)=f(x^-)$, it follows that  $\wlim_nf(x_n)=f(x)$. Also, by the Jordan decomposition theorem (see \cite{DunfordSchwartzBook1958Reprint1988}, page 98, Theorem 8), every $\mu\in \ell_\infty^*$ can be written as $\mu=\mu^+-\mu^-$, where $\mu^+,\mu^-\in\ell_\infty^*$ are positive finitely additive measures. Hence, in order to verify that  $\wlim_n f(x_n)=f(\wlim_n x_n)$, we can restrict ourselves to positive finitely additive measures.

Fix a weakly convergent sequence $(x_n)_n$  in $B_{c_0(I)}$ so that $x_n=x^+_n$, for all $n\in\N$. Say $x=\wlim_n x_n$. Fix a positive finitely additive measure $\mu\in\ell_\infty$. Since $f$ is uniformly continuous, without loss of generality, assume that $\supp(x_n)\coloneqq \{i\in I\mid x_n(i)\neq 0\}$ is finite, for all $n\in\N$. Assume for a contradiction that $ (\mu(f(x_n)))_n$ does not converge to $\mu(f(x))$. Then, by going to a subsequence,  there exists $\eps>0$ so that $|\mu(f(x))-\mu(f(x_n))|>\eps$, for all $n\in\N$.

By Claim \ref{ClaimFormulaMu}, $\mu(f(x))=\sum_{i\in I}x(i)\mu(A_i)$. Hence, pick a finite subset  $F\subset I$  such that $\sum_{i\in I\setminus F}x(i)\mu(A_i)<\eps/3$. We construct an increasing sequence of natural numbers $(n_k)_k$ and a disjoint sequence $(F_k)_k$ of finite subsets of $I$ by induction on $k$ as follows.  Since $(x_n)_n$ converges to $x$ coordinatewise, pick $n_1\in\N$ such that 
\[\Big|\sum_{i\in F} x(i)\mu(A_i)-\sum_{i\in F} x_{n_1}(i)\mu(A_i)\Big|<\frac{\eps}{3}.\]
Let $F_1=\supp(x_{n_1})\setminus F$. Assume  $(n_s)_{s=1}^{k-1}$ and $(F_s)_{s=1}^{k-1}$ have been defined. Let $F'=F\cup F_1\cup\ldots F_{k-1}$ and pick $n_k>n_{k-1}$ such that 
\[\Big|\sum_{i\in F'} x(i)\mu(A_i)-\sum_{i\in F'} x_{n_k}(i)\mu(A_i)\Big|<\frac{\eps}{3}.\]
Let $F_k=\supp(x_{n_k})\setminus F'$. This finishes the definition of  $(n_k)_k$ and $(F_k)_k$.

Let $E_1=F$ and for each $k>1$ set $E_k=F\cup F_1\cup\ldots \cup F_{k-1}$. By the definitions of $(n_k)_k$ and $(F_k)_k$, it follows that $\supp(x_{n_k})\subset F_k\sqcup E_k$ and $F\subset E_k$, for all $k\in\N$. Moreover, using Claim \ref{ClaimFormulaMu},  we have that
\begin{align*}
\sum_{i\in F_k}x_{n_k}(i)\mu(A_{i})& \geq |\mu(x_{n_k})-\mu(x)|-\Big|\sum_{i\in E_k} x(i)\mu(A_i)-\sum_{i\in E_k} x_{n_k}(i)\mu(A_i)\Big|\\
& \ \ \ \ -\sum_{i\in I\setminus E_k}x(i)\mu(A_i)\\
&\geq \eps-\frac{\eps}{3}-\frac{\eps}{3}=\frac{\eps}{3},
\end{align*}
for all $k\in\N$. Notice that $\mu(\cup_{i\in E}A_i)=\sum_{i\in E}\mu(A_i)$, for all finite subsets $E\subset I$.  Therefore, since the sequence $(F_k)_k$ is disjoint and since  $x_n(i)\leq 1$, for all $n\in\N$ and all $i\in I$, this shows that  
\begin{align*}
\mu(\N)&\geq \mu\Big(\bigcup\{A_i\mid i\in \cup_{s=1}^kF_s\}\Big)\\
&=\sum_{s=1}^k\sum_{i\in F_s}\mu(A_i)\\
&\geq \sum_{s=1}^k\sum_{i\in F_s}x_{n_s}(i)\mu(A_i)\\
&\geq \frac{\eps k}{3},
\end{align*}
 for all $k\in\N$. Since $\|\mu\|<\infty$, i.e., $\mu$ has finite total  variation, this is a contradiction. 
\end{proof}

Let $q_0:\ell_\infty\to \ell_\infty/c_0$ be the quotient map. Then, it is clear that $T: e_i\in c_0(I)\mapsto q_0(\chi_{A_i})\in\ell_\infty/c_0 $  defines a linear isometry between $c_0(I)$ and $\overline{\text{span}}\{ q_0(\chi_{A_i})\mid i\in I\}$. Define $q=T^{-1}\circ q_0$. Clearly, $q\circ f=\Id_{c_0(I)}$ and this finishes the proof.
\end{proof}

\begin{proof}[Proof of Theorem \ref{ThmWSCCoarseEmbWeakerIsomorphcEmb}]
Let $g:c_0(2^{\aleph_0})\to \ell_\infty$ and $q:\ell_\infty\to c_0(2^{\aleph_0})$ be given by Proposition \ref{PropWSC}. Let $X=c_0\oplus c_0(2^{\aleph_0})$ and $Y=q^{-1}(c_0(2^{\aleph_0}))$, and define $f:X\to Y$ by letting $f(x,z)=x+g(z)$, for all $(x,z)\in X$. Then $f^{-1}(y)=(y-g\circ q(y),q(y))$, for all $y\in Y$. The map $f$ is a weakly sequentially continuous Lipschitz equivalence.  Since $c_0(2^{\aleph_0})$ does not linearly embed in $\ell_\infty$, this finishes the proof.
\end{proof}

\section{Weakly sequentially continuous coarse Lipschitz embeddings}\label{SectionWSCCoarseLipEmb}

In this section, we make use of the machinery of weakly null trees in Banach spaces and its relation with $p$-asymptotic uniform smoothness in order to study weakly sequentially continuous embeddings. The main goal of this section is to show that, in the class of Banach spaces with separable dual, AUSableness is stable under coarse Lipschitz embeddability by weakly sequentially continuous maps (see Theorem \ref{CorAUSablenessStableWSCCoarseLipEmb}).

\subsection{Weakly null tree properties and asymptotic uniform smoothness}

Asymptotic uniform smoothness is closely related to properties regarding weakly null trees in Banach spaces. In this subsection, we introduce those notions and proof the necessary results so we can obtain Theorem \ref{CorAUSablenessStableWSCCoarseLipEmb}.

\begin{defi}
Let $X$ be a Banach space, $B\subset X$ and  let $\M\subset \N$ be an infinite subset. A family $(x_{\bar{n}})_{\bar{n}\in [\M]^{\leq k}}$ in $B$ is a \emph{tree in $B$} and its \emph{height} is defined to be $k$. The tree $(x_{\bar{n}})_{\bar{n}\in [\M]^{\leq k}}$ is a \emph{weakly null tree} if  the sequence $(x_{\bar{n},n})_{n>\bar {n}}$ is  weakly null for all $\bar{n}\in [\M]^{\leq k-1}\cup\{\emptyset\}$. 
\end{defi}

The next definition is a ``tree-like'' version of the  weak $p$-Banach-Saks property.

\begin{defi}
Let $p\in (1,\infty]$ and $C>0$. A Banach space $X$ has the \emph{tree-$p$-Banach-Saks property with constant $C$} if given any $k\in\N$ and  any weakly null tree $(x_{\bar{n}})_{\bar{n}\in[\N]^{\leq k}}$ in $\partial B_X$ there exists $\bar{n}=(n_1,\ldots,n_k)\in[\N]^{k}$ so that 
\[\Big\|\sum_{j=1}^kx_{n_1,\ldots,n_j}\Big\|\leq Ck^{1/p}\]
(if $p=\infty$, we use the convention ${1/\infty}=0$). We say that $X$ has the \emph{tree-$p$-Banach-Saks property} if $X$ has the tree-$p$-Banach-Saks property with constant $C$ for some $C>0$. 
\end{defi}
 
Standard Ramsey theory implies that, for some infinite subset $\M\subset \N$, we can assume that $\|\sum_{j=1}^kx_{n_1,\ldots,n_j}\|\leq Ck^{1/p}$,
for all $\bar{n}\in[\M]^{k}$, in the definition above. Moreover,  the following holds. 

\begin{prop}\label{equivTreeBS}
Let $p\in (1,\infty]$, $C>0$ and $X$ be a Banach space. The following are equivalent.

\begin{enumerate}[(i)]
\item $X$ has the tree-$p$-Banach-Saks property with constant $C$.
\item for every  weakly null tree $(x_{\bar{n}})_{\bar{n}\in[\N]^{\leq k}}$ in $X$ there exists an infinite subset $\M\subset \N$ so that
\[\Big\|\sum_{j=1}^k\eps_jx_{n_1,\ldots,n_j}\Big\|\leq Ck^{1/p},\]
for all $\bar{n}\in[\M]^{k}$ and all $\eps_1,\ldots,\eps_k\in \{-1,1\}^{k}$.  
\end{enumerate}  
\end{prop}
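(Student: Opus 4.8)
The plan is to prove the two implications separately, with (ii) $\Rightarrow$ (i) being essentially trivial and (i) $\Rightarrow$ (ii) requiring the real work. For (ii) $\Rightarrow$ (i), given a weakly null tree $(x_{\bar n})_{\bar n \in [\N]^{\leq k}}$ in $\partial B_X$, apply (ii) to obtain an infinite $\M$ with the displayed bound for all sign choices; specialising to $\eps_1 = \cdots = \eps_k = 1$ and picking any $\bar n \in [\M]^k$ gives exactly the tree-$p$-Banach-Saks inequality, so $X$ has the tree-$p$-Banach-Saks property with constant $C$.

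For (i) $\Rightarrow$ (ii), the idea is to pass from an arbitrary weakly null tree to one living in the sphere, to absorb the signs, and to handle the case of coordinates of small norm by a perturbation/rescaling argument. First I would fix a weakly null tree $(x_{\bar n})_{\bar n \in [\N]^{\leq k}}$ in $X$. The tree is automatically bounded (each branch is finite, but more importantly we only need a fixed height $k$, and we may assume after a pruning that all $\|x_{\bar n}\|$ lie in a fixed interval $[0,M]$ — actually boundedness of a weakly null sequence is automatic). The main obstacle is that some nodes $x_{\bar n}$ may have norm much smaller than $1$, so we cannot directly normalise them to lie in $\partial B_X$ and invoke (i). The standard device is: choose a small $\delta > 0$; by a Ramsey/pruning argument along the tree, pass to an infinite subset so that for each $\bar n$ the norm $\|x_{\bar n}\|$ is either $< \delta$ or $\geq \delta$, and in fact (using that a weakly null sequence that does not converge to $0$ in norm has a subsequence bounded below) we can arrange a clean dichotomy at each level. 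For the nodes with $\|x_{\bar n}\| \geq \delta$, replace $x_{\bar n}$ by $x_{\bar n}/\|x_{\bar n}\|$ to land in $\partial B_X$; for the nodes with small norm, note their total contribution to any sum $\sum_{j=1}^k \eps_j x_{n_1,\ldots,n_j}$ is at most $k\delta$, which is negligible compared to $k^{1/p}$ once $\delta$ is chosen small. Then apply (i) — or rather its sign-version, see next paragraph — to the normalised sub-tree.

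To deal with the signs $\eps_1,\ldots,\eps_k$, I would first observe that the tree-$p$-Banach-Saks property with constant $C$, applied to the weakly null tree $(\eps'_{|\bar n|} x_{\bar n})_{\bar n}$ for any fixed sign pattern $\eps' \in \{-1,1\}^k$ (which is still weakly null and still in $\partial B_X$ when the original was), yields a single branch with $\|\sum_j \eps'_j x_{n_1,\ldots,n_j}\| \leq C k^{1/p}$; then Ramsey theory for colourings of $[\N]^k$ upgrades ``one branch'' to ``all branches in some infinite $\M$'', and since there are only $2^k$ sign patterns we intersect finitely many such infinite sets to get one $\M$ working simultaneously for all $\eps \in \{-1,1\}^k$. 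Combining this with the small-norm perturbation of the previous paragraph (choosing, say, $\delta$ so that $k\delta \leq k^{1/p}$, i.e. $\delta \leq k^{1/p - 1}$, which is fine since $\delta$ may depend on $k$), and possibly replacing $C$ by $C+1$ to absorb the error term, gives the bound $\|\sum_{j=1}^k \eps_j x_{n_1,\ldots,n_j}\| \leq (C+1) k^{1/p}$ for all branches in $\M$ and all sign choices. Strictly speaking the constant in (ii) then degrades from $C$ to $C+1$; if the statement genuinely demands the same constant $C$, one sharpens the perturbation by choosing $\delta$ so that the small-norm total is at most $\eta k^{1/p}$ for arbitrarily small $\eta$ and rescaling, but I expect the intended reading tolerates an arbitrarily small loss, and the key point — the equivalence up to passing to the property-with-some-constant — is what matters for the applications. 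The only genuine subtlety, hence the main obstacle, is the normalisation step: ensuring via pruning that the weakly null structure survives restricting to the ``large norm'' nodes and that normalising does not destroy the weakly null property of the successor sequences, which follows since if $(x_{\bar n, n})_n$ is weakly null and $\|x_{\bar n,n}\| \to t > 0$ along the chosen subsequence then $(x_{\bar n,n}/\|x_{\bar n,n}\|)_n$ is again weakly null.
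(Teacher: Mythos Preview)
Your core argument for (i) $\Rightarrow$ (ii) --- replace the tree $(x_{\bar n})$ by $(\eps_{|\bar n|}x_{\bar n})$ for each fixed sign pattern $\bar\eps\in\{-1,1\}^k$, observe this is still a normalized weakly null tree, apply (i), upgrade via Ramsey to an infinite $\M$, and intersect over the $2^k$ sign patterns --- is exactly the paper's proof, and your (ii) $\Rightarrow$ (i) is the same trivial direction.

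Where you diverge is in the long normalization/perturbation discussion. You read condition (ii) as applying to arbitrary weakly null trees ``in $X$'', and then spend two paragraphs handling nodes of small norm. But as stated literally (unnormalized trees in $X$) condition (ii) is false by scaling, so this cannot be the intended reading; and indeed the paper's own proof opens with ``Let $(x_{\bar n})_{\bar n\in[\N]^{\leq k}}$ be a weakly null tree in $\partial B_X$''. With that reading all of your pruning, small-$\delta$ dichotomy, and worry about the constant degrading from $C$ to $C+1$ simply evaporate: the signed tree is automatically in $\partial B_X$, (i) applies directly, and the constant $C$ is preserved exactly. So your proof is correct but over-engineered; once the superfluous normalization layer is removed it is the paper's argument verbatim.
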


\begin{proof}
(ii) implies (i) is trivial. Let us show that (i) implies (ii). Let  $(x_{\bar{n}})_{\bar{n}\in[\N]^{\leq k}}$ be a weakly null tree in $\partial B_X$. Notice that, for  any $\bar{\eps}=(\eps_1,\ldots,\eps_k)\in\{-1,1\}^k$, the tree $(z_{\bar{n}})_{\bar{n}\in[\N]^{\leq k}}$, where $z_{\bar{n}}=\eps_jx_{\bar{n}}$ for all $\bar n\in[\N]^{\leq k}$, is a normalized  weakly null tree.  Therefore, as $|\{-1,1\}^k|$ is finite,  there exists   an infinite subset $\M\subset \N$ so that 
\[\Big\|\sum_{j=1}^k\eps_jx_{n_1,\ldots,n_j}\Big\|\leq Ck^{1/p},\]
for all $\bar{n}\in [\M]^{k}$, and  all $\eps_1,\ldots,\eps_k\in \{-1,1\}$.
\end{proof}

\begin{remark}
 Clearly, if $X$ has the tree-$p$-Banach-Saks property, then $X$ has the weak $p$-Banach-Saks property. Indeed, let $(x_n)_n$ be a normalized weakly null sequence in $X$. For each $\bar{n}=(n_1,\ldots,n_j)\in[\N]^{\leq k}$ let $y_{\bar{n}}=x_{n_j}$. So, $(y_{\bar{n}})_{\bar{n}\in[\N]^{\leq k}}$ is a normalized weakly null tree. Using that $X$ has the tree-$p$-Banach-Saks property applied to the tree $(y_{\bar{n}})_{\bar{n}\in[\N]^{\leq k}}$, one gets the desired subsequence of $(x_n)_n$.
\end{remark}

\begin{defi}
Let $p\in (1,\infty]$ and $C>0$. A Banach space $X$ satisfies \emph{upper $\ell_p$-tree estimates with constant $C$} if given any $k\in\N$ and any weakly null tree $(x_{\bar{n}})_{\bar{n}\in[\N]^{\leq k}}$ in $\partial B_X$ there exists $\bar{n}\in[\N]^{k}$ so that 
\[\Big\|\sum_{j=1}^ka_jx_{n_1,\ldots,n_j}\Big\|\leq C\Big(\sum_{j=1}^k|a_j|^p\Big)^{1/p},\]
for all $a_1,\ldots,a_k\in\R$ (if $p=\infty$, we use the convention $(\sum_{j=1}^k|a_j|^p)^{1/p}=\max_j |a_j|$) . We say that $X$ satisfies \emph{upper $\ell_p$-tree estimates} if $X$ satisfies upper $\ell_p$-tree estimates with constant $C$ for some $C>0$. 
\end{defi}

Similarly to Proposition \ref{equivTreeBS}, Ramsey theory implies  that   $X$ satisfies upper $\ell_p$-tree estimates with constant $C$ if and only if  given  $k\in\N$ and a weakly null tree $(x_{\bar{n}})_{\bar{n}\in[\N]^{\leq k}}$ in $\partial B_X$ there exists an infinite subset $\M\subset \N$ so that 
\[\Big\|\sum_{j=1}^ka_jx_{n_1,\ldots,n_j}\Big\|\leq C\Big(\sum_{j=1}^k|a_j|^p\Big)^{1/p},\]
for all  $\bar{n}\in[\M]^{k}$ and all $a_1,\ldots,a_k\in\R$.

\begin{prop}\label{PropAUS}
Let $p\in (1,\infty)$ and let $X$ be a Banach space. The following holds.

\begin{enumerate}[(i)]
\item If $X$ is $p$-AUSable, then $X$ has the tree-$p$-Banach-Saks property.
\end{enumerate}
Moreover, if $X$ has separable dual,  the following holds.
\begin{enumerate}[(i)]\setcounter{enumi}{1}
\item If $X$ satisfies upper $\ell_p$-tree estimates, then $X$ is $p'$-AUSable, for all $p'\in (1,p)$.
\end{enumerate}
\end{prop}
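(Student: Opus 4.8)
\textbf{Proof plan for Proposition \ref{PropAUS}.}

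The plan is to prove both statements by understanding the relationship between weakly null trees and the modulus $\overline\rho_X$, and then applying known characterizations for spaces with separable dual. For part (i), I would start from the hypothesis that $X$ has an equivalent $p$-AUS norm, say with $\overline\rho_X(t)\le Ct^p$ for all $t\ge 0$; by Remark \ref{RamarkRhop}, $\|\bar 1\|_{\overline\rho_X}\le C^{1/p}k^{1/p}$ for the $k$-tuple $\bar 1\in\R^k$. Given a weakly null tree $(x_{\bar n})_{\bar n\in[\N]^{\le k}}$ in $\partial B_X$, I would build a branch $\bar n=(n_1,\dots,n_k)$ by a finite descending induction (picking $n_k$, then $n_{k-1}$, etc., or rather top-down): at each level, since the relevant sequence is weakly null, it is in particular weak$^*$ null when viewed in $X^{**}$, so Proposition \ref{PropLancienRaja} applies and lets me control $\|\,(\text{partial sum so far}) + t\,x_{\dots,n_j}\|$ by $(1+\overline\rho_X(\cdot))$ times the norm of the partial sum. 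Iterating this $k$ times and invoking the estimate $N_k(\bar 1)\le e\|\bar 1\|_{\overline\rho_X}$ (Lemma 4.3 of \cite{Kalton2013AsymptoticStructure}, already used in the proof of Theorem \ref{TheoremLancienRajaGEN}) yields $\|\sum_{j=1}^k x_{n_1,\dots,n_j}\|\le e\,C^{1/p}k^{1/p}$, which is exactly the tree-$p$-Banach-Saks property (with constant $eC^{1/p}$). I should be slightly careful to get a genuine branch rather than just an ultrafilter limit: because at each level I only need \emph{one} good index and the bound from Proposition \ref{PropLancienRaja} is a limit along an ultrafilter of a weakly null net, cofinality of the ultrafilter gives an actual choice of $n_j$.

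For part (ii), I would not argue directly but instead quote the known equivalence, for spaces with separable dual, between satisfying upper $\ell_p$-tree estimates and being ``asymptotically $\ell_p$''-like; the relevant result is the Knaust--Odell--Zsák / Odell--Schlumprecht-type renorming theorem, which says that a separable Banach space with separable dual satisfying upper $\ell_q$-tree estimates admits an equivalent norm whose modulus $\overline\rho$ satisfies $\overline\rho(t)\le Ct^q$ — i.e.\ it is $q$-AUSable. One subtlety is the usual loss of exponent: the sharp duality/renorming statements give $q$-AUSability only for $q$ strictly below the tree-estimate exponent $p$ (this is why the statement asks for $p'\in(1,p)$ rather than $p'=p$). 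Concretely, if $X$ has upper $\ell_p$-tree estimates, then for each $p'<p$ one checks (or cites) that $X$ has upper $\ell_{p'}$-tree estimates as well in a quantitatively robust way, and then the renorming theorem upgrades this to a $p'$-AUS renorming. I would phrase this as: apply \cite[Theorem ...]{Odell--Schlumprecht-type reference} to obtain, for each $p'\in(1,p)$, an equivalent $p'$-AUS norm on $X$.

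The main obstacle I anticipate is part (ii): unlike part (i), which is a fairly mechanical iteration of Proposition \ref{PropLancienRaja}, part (ii) genuinely requires the hard renorming machinery (the fact that upper $\ell_p$-tree estimates on a space with separable dual can be ``straightened out'' into an honest asymptotically uniformly smooth norm). This is where the separable-dual hypothesis is essential — it is used to run the tree/Schreier-type arguments underlying those renorming theorems — and it is also the source of the $p'<p$ rather than $p'=p$ in the conclusion. So in writing the proof I would keep part (i) self-contained and explicit, and for part (ii) I would cite the appropriate renorming theorem (with the $\varepsilon$-loss in the exponent) rather than reproving it, noting only the elementary observation that upper $\ell_p$-tree estimates imply upper $\ell_{p'}$-tree estimates for every $p'<p$.
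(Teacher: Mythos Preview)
Your proposal is correct, and for part (ii) it matches the paper exactly: both you and the paper simply cite the Odell--Schlumprecht renorming theorem (Theorem~3 of \cite{OdellSchlumprecht2006}), noting that upper $\ell_p$-tree estimates verify the relevant hypothesis there and that the separable-dual assumption is what makes that machinery run.

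For part (i), however, your route differs from the paper's. You propose to iterate Proposition~\ref{PropLancienRaja} along a branch, keeping track of the accumulated bound via the norms $N_k$ and then invoking $N_k(\bar 1)\le e\|\bar 1\|_{\overline\rho_X}\le eC^{1/p}k^{1/p}$ from \cite{Kalton2013AsymptoticStructure}. This works and has the virtue of staying entirely within the toolbox already developed in Section~\ref{SectionConcentrationInequality} (it is essentially a one-branch, weakly-null version of the proof of Theorem~\ref{TheoremLancienRajaGEN}). The paper instead imports a cleaner external fact: by \cite[Proposition~1.9]{DimantGonzaloJaramillo2009}, a $p$-AUS norm satisfies a $p$-power inequality
\[
\limsup_n\|x+x_n\|^p\le\|x\|^p+C\limsup_n\|x_n\|^p
\]
for every weakly null $(x_n)_n$, and then the induction on the height $k$ becomes the one-line arithmetic $\tilde C^p(k-1)+C\le\tilde C^pk$. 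The paper's argument is shorter and gives a slightly better constant (any $\tilde C>C^{1/p}$ rather than $eC^{1/p}$); your argument is more self-contained relative to the rest of the paper. One small wrinkle in your plan: the branch should be built \emph{forward} (choose $n_1$, then $n_2$, \ldots), not by ``descending induction,'' since at each stage you need the already-constructed partial sum $\sum_{j<\ell}x_{n_1,\ldots,n_j}$ to play the role of the fixed vector in Proposition~\ref{PropLancienRaja}.
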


\begin{proof}
(i) Consider  $X$ endowed with a norm making $X$ into a $p$-AUS space. 
By \cite{DimantGonzaloJaramillo2009}, Proposition 1.9, there exists $C>1$ so that 
\begin{equation}\label{EqUppermp}
\limsup_n\|x+x_n\|^p\leq \|x\|^p+C\limsup_n\|x_n\|^p,
\end{equation}
for all $x\in X$ and all weakly null sequence $(x_n)_n$ in $X$. Let us show that $X$ has the tree-$p$-Banach-Saks property with constant $\tilde{C}$, for all $\tilde{C}>C^{1/p}$. The proof follows by induction on the height of weakly null trees in $\partial B_X$. For $k=1$, the result is trivial. Assume it holds for $k-1$. Let  $(x_{\bar{n}})_{\bar{n}\in[\N]^{\leq k}}$ be a weakly null tree in $\partial B_X$, so $(x_{\bar{n}})_{\bar{n}\in[\N]^{\leq k-1}}$ is a weakly null tree in $\partial B_X$ of height $k-1$. Fix $\tilde{C}>C^{1/p}$. By the induction hypothesis and  Proposition \ref{equivTreeBS}, pick an infinite subset $\M\subset \N$ so that 
\[\Big\|\sum_{j=1}^{k-1}x_{n_1,\ldots,n_j}\Big\|\leq \tilde{C} (k-1)^{1/p},\]
for all $\bar{n}\in[\M]^{k-1}$. Fix $\bar{n}_0=(n_1,\ldots,n_{k-1})\in [\M]^{k-1}$. Letting $x=\sum_{j=1}^{k-1}x_{n_1,\ldots,n_j}$,   \eqref{EqUppermp}  implies that 
\begin{align*}
\limsup_{n_k}\Big\|\sum_{j=1}^{k-1}x_{n_1,\ldots,n_)}+x_{n_1,\ldots,n_{k-1},n_k}\Big\|^p&\leq \Big\|\sum_{j=1}^{k-1}x_{n_1,\ldots,n_j}\Big\|^p+C\\
&\leq \tilde{C}^{p}k
\end{align*}
As $\tilde{C}>C^{1/p}$ was arbitrary, the result follows. 

(ii) If $X$ satisfies upper $\ell_p$-tree estimates, then it is trivial to check that $X$ satisfies  (2) of Theorem 3 of \cite{OdellSchlumprecht2006}.\footnote{Altought this is straightforward, introducing the necessary notation would increase the size of these notes significantly. Therefore, since those notions will not be used elsewhere in this paper, we simply refer the reader to \cite{OdellSchlumprecht2006}.}  Hence, by the proof of (2)$\Rightarrow$(3) of Theorem 3 of \cite{OdellSchlumprecht2006}, if follows that $X$ is $p'$-AUSable, for all $p'\in (1,\infty)$.
\end{proof}

The next lemma follows by a simple induction on $k\in\N$, so we omit its proof. 

\begin{lemma}\label{convhull}
Let $V$ be a normed vector space and $e_1,\ldots,e_k\in V$. Then, for all $a_1,\ldots,a_k\in [-1,1]$, 
\[\sum_{j=1}^ka_je_j\in \text{conv}\Big\{\sum_{j=1}^k\varepsilon_je_j\mid \varepsilon_1,\ldots,\varepsilon_k\in\{-1,1\}\Big\}.\] In particular, if $M>0$ is such that $\|\sum_{i=1}^k\varepsilon_ie_i\|\leq M$, for all $\varepsilon_1,\ldots,\varepsilon_k\in\{-1,1\}$, then $\|\sum_{i=1}^ka_ie_i\|\leq LM$, for all $a_1,\ldots,a_k\in[-L,L]$.
\end{lemma}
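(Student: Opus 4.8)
The statement to prove is Lemma~\ref{convhull}: for vectors $e_1,\ldots,e_k$ in a normed space $V$ and scalars $a_1,\ldots,a_k\in[-1,1]$, the vector $\sum_{j=1}^k a_j e_j$ lies in the convex hull of the $2^k$ sign-combinations $\sum_{j=1}^k \varepsilon_j e_j$ with $\varepsilon_j\in\{-1,1\}$; the ``in particular'' clause follows by rescaling and the triangle inequality for convex combinations.

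\textbf{Plan of proof.} The plan is to prove the first assertion by induction on $k$. For $k=1$ the point is that $a_1 e_1 = \tfrac{1+a_1}{2}e_1 + \tfrac{1-a_1}{2}(-e_1)$, a convex combination of $e_1$ and $-e_1$ since $a_1\in[-1,1]$ forces both coefficients to lie in $[0,1]$ and sum to $1$. For the inductive step, assume the claim for $k-1$. Given $a_1,\ldots,a_k\in[-1,1]$, split off the last coordinate: write
\[
\sum_{j=1}^k a_j e_j = \Big(\sum_{j=1}^{k-1} a_j e_j\Big) + a_k e_k
= \frac{1+a_k}{2}\Big(\sum_{j=1}^{k-1} a_j e_j + e_k\Big) + \frac{1-a_k}{2}\Big(\sum_{j=1}^{k-1} a_j e_j - e_k\Big).
\]
Each of the two bracketed vectors is of the form $\big(\sum_{j=1}^{k-1} a_j e_j\big) \pm e_k$; by the inductive hypothesis applied to $e_1,\ldots,e_{k-1}$, the vector $\sum_{j=1}^{k-1} a_j e_j$ is a convex combination $\sum_{\bar\varepsilon} \lambda_{\bar\varepsilon} \sum_{j=1}^{k-1}\varepsilon_j e_j$ over $\bar\varepsilon\in\{-1,1\}^{k-1}$, so adding $\pm e_k$ gives a convex combination of the vectors $\sum_{j=1}^{k-1}\varepsilon_j e_j \pm e_k$, which are exactly sign-combinations over $\{-1,1\}^k$. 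Since $a_k\in[-1,1]$, the outer coefficients $\tfrac{1\pm a_k}{2}$ are nonnegative and sum to $1$, so the whole expression is a convex combination of convex combinations of the $2^k$ sign vectors, hence itself a convex combination of them. This closes the induction.

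\textbf{The ``in particular'' clause.} For the quantitative consequence, suppose $\|\sum_{j=1}^k\varepsilon_j e_j\|\le M$ for all $\varepsilon_j\in\{-1,1\}$, and let $a_1,\ldots,a_k\in[-L,L]$. Apply the first part to the scalars $a_j/L\in[-1,1]$ and the vectors $e_j$: then $\sum_{j=1}^k (a_j/L) e_j = \sum_{\bar\varepsilon}\lambda_{\bar\varepsilon}\sum_{j=1}^k\varepsilon_j e_j$ for some convex weights $\lambda_{\bar\varepsilon}\ge 0$ with $\sum_{\bar\varepsilon}\lambda_{\bar\varepsilon}=1$. Taking norms and using convexity of $\|\cdot\|$,
\[
\Big\|\sum_{j=1}^k \frac{a_j}{L} e_j\Big\| \le \sum_{\bar\varepsilon}\lambda_{\bar\varepsilon}\Big\|\sum_{j=1}^k\varepsilon_j e_j\Big\| \le M,
\]
and multiplying through by $L$ gives $\|\sum_{j=1}^k a_j e_j\|\le LM$, as desired.

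\textbf{Main obstacle.} There is essentially no obstacle here; the only point requiring a moment's care is bookkeeping in the inductive step—namely observing that a convex combination whose ingredients are themselves convex combinations of a fixed finite set is again a convex combination of that set (associativity/flattening of convex combinations), and that appending a fixed vector $\pm e_k$ to each term of a convex combination preserves the convex-combination structure. These are routine and are exactly why the paper omits the proof; I would state them in one line each rather than belabor them.
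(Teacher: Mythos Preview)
Your proof is correct and follows exactly the approach the paper indicates: the paper omits the proof, stating only that it ``follows by a simple induction on $k\in\N$,'' which is precisely what you carry out. Your treatment of both the inductive step and the ``in particular'' clause is clean and complete.
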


\begin{lemma}\label{peps}
Let $p\in(1,\infty)$, $C>0$, and  $k\in\N$. Let  $(x_n)_{n=1}^k$ be a finite sequence in a normed space with the following property: for all $l\in\{1,\ldots,k\}$, and all $m_1<\ldots<m_l\in\{1,\ldots,k\}$, it holds that
\[\Big\|\sum_{j=1}^l\eps_jx_{m_j}\Big\|\leq C l^{1/p},\]
for all $\eps_1,\ldots, \eps_l\in \{-1,1\}$. Then, for all $\eps\in(0,p-1)$, there exists a constant $\tilde{C}=\tilde{C}(p,C,\eps)$ so  that  
\[\Big\|\sum_{n=1}^ka_nx_n\Big\|\leq \tilde{C} \Big(\sum_{n=1}^k|a_n|^{p-\eps}\Big)^{1/(p-\eps)},\]
for all $a_1,\ldots ,a_k\in \R$.
\end{lemma}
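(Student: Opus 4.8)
The plan is to prove this by a dyadic blocking argument, grouping the coefficients $a_1,\dots,a_k$ according to the size of $|a_n|$ and estimating the contribution of each block using the hypothesis together with Lemma \ref{convhull}. First I would normalize: it suffices to treat the case $\sum_{n=1}^k |a_n|^{p-\eps} = 1$, so that in particular $|a_n| \le 1$ for all $n$, and the goal becomes $\|\sum_n a_n x_n\| \le \tilde C$. For each integer $j \ge 0$ set $N_j = \{\, n \in \{1,\dots,k\} : 2^{-(j+1)} < |a_n| \le 2^{-j}\,\}$ and let $\ell_j = |N_j|$. Then $\sum_n a_n x_n = \sum_{j\ge 0} \sum_{n\in N_j} a_n x_n$, and by Lemma \ref{convhull} (applied with $L = 2^{-j}$, since $|a_n| \le 2^{-j}$ on $N_j$) combined with the hypothesis, one gets $\big\|\sum_{n\in N_j} a_n x_n\big\| \le 2^{-j} C \ell_j^{1/p}$.

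The next step is the arithmetic: I would bound $\ell_j$ in terms of the normalization. Since every $n \in N_j$ contributes $|a_n|^{p-\eps} > 2^{-(j+1)(p-\eps)}$ to the sum $\sum_n |a_n|^{p-\eps} = 1$, we get $\ell_j < 2^{(j+1)(p-\eps)}$, hence $\ell_j^{1/p} < 2^{(j+1)(p-\eps)/p}$. Therefore
\[
\Big\|\sum_{n=1}^k a_n x_n\Big\| \le \sum_{j\ge 0} 2^{-j} C \ell_j^{1/p} < C \sum_{j\ge 0} 2^{-j} 2^{(j+1)(p-\eps)/p} = C\, 2^{(p-\eps)/p} \sum_{j\ge 0} 2^{j((p-\eps)/p - 1)}.
\]
Since $\eps > 0$ we have $(p-\eps)/p - 1 = -\eps/p < 0$, so the geometric series converges, to $\big(1 - 2^{-\eps/p}\big)^{-1}$. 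This gives the bound with $\tilde C = \tilde C(p,C,\eps) = C\, 2^{(p-\eps)/p} \big(1 - 2^{-\eps/p}\big)^{-1}$, and then by homogeneity the general case follows by rescaling the $a_n$.

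The main thing to be careful about — really the only non-routine point — is the application of Lemma \ref{convhull} to each block: the lemma lets us pass from the uniform bound $\|\sum \eps_n x_n\| \le C\ell_j^{1/p}$ over all sign patterns (which is exactly what the hypothesis delivers, for the subset $N_j$ of size $\ell_j$) to the bound $\|\sum_{n\in N_j}(a_n/2^{-j}) x_n\| \le C\ell_j^{1/p}$ for the rescaled coefficients $a_n/2^{-j} \in [-1,1]$, and hence $\|\sum_{n\in N_j} a_n x_n\| \le 2^{-j} C \ell_j^{1/p}$. One should also note the hypothesis is stated for $m_1 < \dots < m_l$, i.e. for every subset of coordinates, which is what allows us to apply it to each $N_j$ separately; and the empty-block case $\ell_j = 0$ is harmless. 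Everything else is the convergent geometric sum, which is where the restriction $\eps < p-1$ — guaranteeing $p - \eps > 1$ and more importantly $\eps/p > 0$ — is used (though in fact only $\eps > 0$ is needed for convergence; $\eps < p - 1$ ensures $p - \eps$ is a legitimate exponent $> 1$ so the right-hand side is a genuine $\ell_{p-\eps}$-type norm).
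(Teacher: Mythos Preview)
Your proof is correct and follows essentially the same approach as the paper's: a dyadic decomposition of the coefficients, Lemma~\ref{convhull} applied to each block, and a convergent geometric series to sum the block contributions. The only cosmetic difference is that the paper normalizes on the $\ell_p$-sphere and finishes with H\"older's inequality, whereas you normalize on the $\ell_{p-\eps}$-sphere and bound $|N_j|$ directly---your route is arguably a touch more elementary.
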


\begin{proof}
Fix $\eps\in (0,p-1)$. Let $(x_n)_{n=1}^k$ a finite sequence as above.  Let $\bar{a}=(a_1,\ldots,a_k)\in \partial B_{\ell^k_p}$. So,  $\|\bar{a}\|_{\ell^k_{p-\varepsilon}}\geq 1$ and $|a_i|\leq 1$, for all $i\in\{1,\ldots,k\}$. For each $j\in\N$, let $F_j=\{i\mid |a_i|\in (2^{-j}, 2^{1-j}]\}$. Then, Lemma \ref{convhull} implies that

\begin{align*}
\Big\|\sum_{n=1}^ka_i x_{i}\Big\|&\leq \sum_{j\in \N}\Big\|\sum_{n\in F_j}a_i x_{i}\Big\|\\
&\leq \sum_{j\in\N} C 2^{1-j}|F_j|^{1/p}\\
&\leq 2C\sum_{j\in\N}\Big(\sum_{i\in F_j}|a_i|^p\Big)^{1/p}\\
&=2C\sum_{j\in\N}\Big(\sum_{i\in F_j}|a_i|^{p-\varepsilon}\cdot|a_i|^\varepsilon \Big)^{1/p}\\
&\leq 2C\sum_{j\in\N}2^{(1-j)\varepsilon /p}\cdot \Big(\sum_{i\in F_j}|a_i|^{p-\varepsilon}\Big)^{1/p}\\
\end{align*}
Let $p'\in (1,\infty)$ be the conjugate of $p$, i.e.,  $1/p+1/p'=1$. Using H\"{o}lder's Inequality to the equation above, 
\begin{align*}
\Big\|\sum_{n=1}^ka_i x_{n}\Big\|&\leq 2C    \Big(\sum_{j\in\N} 2^{(1-j)\varepsilon p'/p}\Big)^{1/p'} \cdot \Big(\sum_{i=1}^k|a_i|^{p-\varepsilon}\Big)^{1/p}\\
&= \tilde{C}\Big(\sum_{i=1}^k|a_i|^{p-\varepsilon}\Big)^{\frac{1}{p-\varepsilon}\cdot \frac{p-\varepsilon}{p}}\\
&\leq \tilde{C}\Big(\sum_{i=1}^k|a_i|^{p-\varepsilon}\Big)^{\frac{1}{p-\varepsilon}},
\end{align*}
where \[\tilde{C}=2C    \Big(\sum_{j\in\N} 2^{(1-j)\varepsilon p'/p}\Big)^{1/p'}.\] As this holds for all $\bar{a}\in \partial B_{\ell^k_p}$, this finishes the proof.
\end{proof}

\begin{cor}\label{treepBSimpliesAUS}
Let $p\in (1,\infty]$. Let $X$ be a Banach space with the tree-$p$-Banach-Saks property. 

\begin{enumerate}[(i)]
\item If $p\in (1,\infty)$, then $X$ satisfies  upper $\ell_{p'}$-tree estimates, for all $p'\in(1,p)$. In particular, if $X$ has separable dual, then $X$ is $p'$-AUSable, for all $p'\in (1,p)$.
\item If $p=\infty$, then $X$ satisfies  upper $\ell_{\infty}$-tree estimates. 
\end{enumerate}
\end{cor}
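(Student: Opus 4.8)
The plan is to deduce Corollary \ref{treepBSimpliesAUS} directly from the technical results already assembled in this subsection, treating the two cases $p\in(1,\infty)$ and $p=\infty$ separately. For case (i), fix $p'\in(1,p)$ and pick $\eps>0$ with $p-\eps\in(p',p)$, say $\eps$ small enough that $p-\eps>p'$. Given a weakly null tree $(x_{\bar n})_{\bar n\in[\N]^{\le k}}$ in $\partial B_X$, apply the tree-$p$-Banach-Saks property together with Proposition \ref{equivTreeBS}(ii) to extract an infinite $\M\subset\N$ along which $\|\sum_{j=1}^k\eps_jx_{n_1,\dots,n_j}\|\le Ck^{1/p}$ for all $\bar n\in[\M]^k$ and all signs. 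The crucial point is that, after this extraction, for a \emph{fixed} chain $\bar n=(n_1,\dots,n_k)\in[\M]^k$ the finite sequence $(x_{n_1,\dots,n_j})_{j=1}^k$ satisfies the hypothesis of Lemma \ref{peps}: indeed any subset $\{j_1<\dots<j_l\}\subset\{1,\dots,k\}$ corresponds to the subchain $(n_1,\dots,n_{j_l})$ truncated appropriately, and the Banach-Saks estimate for that subchain gives exactly $\|\sum_{i=1}^l\eps_ix_{n_1,\dots,n_{j_i}}\|\le Cl^{1/p}$. Hence Lemma \ref{peps} yields a constant $\tilde C=\tilde C(p,C,\eps)$ with $\|\sum_{j=1}^ka_jx_{n_1,\dots,n_j}\|\le\tilde C(\sum_j|a_j|^{p-\eps})^{1/(p-\eps)}$ for all reals $a_j$. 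Since $\ell^k_{p-\eps}\hookrightarrow\ell^k_{p'}$ with norm-one inclusion (as $p-\eps>p'$), this upgrades to an upper $\ell_{p'}$-tree estimate with constant $\tilde C$, uniformly in $k$; that is precisely upper $\ell_{p'}$-tree estimates for $X$. The final sentence of (i) is then immediate from Proposition \ref{PropAUS}(ii).

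For case (ii), where $p=\infty$, the tree-$\infty$-Banach-Saks property with constant $C$ says that along some infinite $\M$ one has $\|\sum_{j=1}^k\eps_jx_{n_1,\dots,n_j}\|\le C$ for all $\bar n\in[\M]^k$ and all sign choices $\eps_j\in\{-1,1\}$ (using $1/\infty=0$), again invoking Proposition \ref{equivTreeBS}. Then Lemma \ref{convhull} applies verbatim: for scalars $a_1,\dots,a_k\in[-1,1]$ the vector $\sum_j a_jx_{n_1,\dots,n_j}$ lies in the convex hull of the sign combinations, so $\|\sum_j a_jx_{n_1,\dots,n_j}\|\le C=C\max_j|a_j|$, and by homogeneity $\|\sum_j a_jx_{n_1,\dots,n_j}\|\le C\max_j|a_j|$ for all reals. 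This is exactly the upper $\ell_\infty$-tree estimate with constant $C$.

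I do not anticipate a serious obstacle; the only point requiring care is the bookkeeping in case (i) that the ``subset of a fixed chain'' hypothesis of Lemma \ref{peps} is genuinely a consequence of the tree-Banach-Saks estimate applied to shorter chains, rather than needing a fresh extraction for each subset — this works because the extraction produces a single $\M$ that is good simultaneously for all heights (which is exactly what Proposition \ref{equivTreeBS} and the preceding Ramsey remarks guarantee), and because the estimate $\|\sum\eps_i x_{n_1,\dots,n_{j_i}}\|\le Cl^{1/p}$ for the truncated subchain is weaker than, hence implied by, the height-$j_l$ estimate along $\M$. A secondary cosmetic point is the choice of $\eps$: one must verify $\eps\in(0,p-1)$ so Lemma \ref{peps} is applicable, which is automatic once $p-\eps>p'>1$. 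Everything else is a direct citation of the lemmas above, so the proof will be short.

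\begin{proof}
(i) Fix $p\in(1,\infty)$ and $p'\in(1,p)$, and choose $\eps>0$ with $\eps<p-1$ and $p-\eps>p'$. Let $C>0$ witness the tree-$p$-Banach-Saks property of $X$, and let $(x_{\bar n})_{\bar n\in[\N]^{\le k}}$ be a weakly null tree in $\partial B_X$. By Proposition \ref{equivTreeBS}, there is an infinite $\M\subset\N$ with
\[\Big\|\sum_{j=1}^l\eps_jx_{n_1,\dots,n_j}\Big\|\le Cl^{1/p}\]
for all $l\le k$, all $\bar n\in[\M]^l$, and all $\eps_1,\dots,\eps_l\in\{-1,1\}$. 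Fix $\bar n=(n_1,\dots,n_k)\in[\M]^k$ and set $y_j=x_{n_1,\dots,n_j}$ for $j\in\{1,\dots,k\}$. If $1\le m_1<\dots<m_l\le k$, then $(n_1,\dots,n_{m_l})\in[\M]^{m_l}$, so the estimate above applied to this chain gives $\|\sum_{i=1}^l\eps_iy_{m_i}\|\le Cl^{1/p}$ for all signs. Thus $(y_j)_{j=1}^k$ satisfies the hypothesis of Lemma \ref{peps}, which yields a constant $\tilde C=\tilde C(p,C,\eps)$, independent of $k$ and of $\bar n$, with
\[\Big\|\sum_{j=1}^ka_jy_j\Big\|\le\tilde C\Big(\sum_{j=1}^k|a_j|^{p-\eps}\Big)^{1/(p-\eps)}\le\tilde C\Big(\sum_{j=1}^k|a_j|^{p'}\Big)^{1/p'}\]
for all $a_1,\dots,a_k\in\R$, where the last inequality is the norm-one inclusion $\ell^k_{p'}\hookrightarrow\ell^k_{p-\eps}$ (valid since $p-\eps>p'$). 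As this holds for every $\bar n\in[\M]^k$, $X$ satisfies upper $\ell_{p'}$-tree estimates with constant $\tilde C$. If moreover $X$ has separable dual, Proposition \ref{PropAUS}(ii) shows $X$ is $p'$-AUSable.

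(ii) Now let $p=\infty$ and let $C>0$ witness the tree-$\infty$-Banach-Saks property. Given a weakly null tree $(x_{\bar n})_{\bar n\in[\N]^{\le k}}$ in $\partial B_X$, Proposition \ref{equivTreeBS} provides an infinite $\M\subset\N$ with $\|\sum_{j=1}^k\eps_jx_{n_1,\dots,n_j}\|\le C$ for all $\bar n\in[\M]^k$ and all $\eps_1,\dots,\eps_k\in\{-1,1\}$. Fix $\bar n\in[\M]^k$. By Lemma \ref{convhull}, for any $a_1,\dots,a_k\in[-1,1]$ the point $\sum_{j=1}^ka_jx_{n_1,\dots,n_j}$ lies in the convex hull of $\{\sum_{j=1}^k\eps_jx_{n_1,\dots,n_j}\mid\eps_j\in\{-1,1\}\}$, hence has norm at most $C$. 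By homogeneity, $\|\sum_{j=1}^ka_jx_{n_1,\dots,n_j}\|\le C\max_j|a_j|$ for all reals $a_1,\dots,a_k$, so $X$ satisfies upper $\ell_\infty$-tree estimates with constant $C$.
\end{proof}
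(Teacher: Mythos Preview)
Part (ii) is correct and is exactly the paper's approach. In part (i) there is a genuine gap in the step where you verify the hypothesis of Lemma~\ref{peps} for the sequence $(y_j)_{j=1}^k=(x_{n_1,\dots,n_j})_{j=1}^k$. Your extraction via Proposition~\ref{equivTreeBS} gives, for each $l'\le k$ and each chain $(n'_1,\dots,n'_{l'})\in[\M]^{l'}$, the bound $\|\sum_{j=1}^{l'}\eps_jx_{n'_1,\dots,n'_j}\|\le C(l')^{1/p}$; this is an estimate for the sum over \emph{all} levels $1,\dots,l'$ of a chain. But Lemma~\ref{peps} requires, for an \emph{arbitrary} subset $m_1<\dots<m_l$ of $\{1,\dots,k\}$, the bound $\|\sum_{i=1}^l\eps_iy_{m_i}\|=\|\sum_{i=1}^l\eps_ix_{n_1,\dots,n_{m_i}}\|\le Cl^{1/p}$, and the vectors $y_{m_i}$ sit at levels $m_1,\dots,m_l$ of one fixed branch, not at levels $1,\dots,l$. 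Applying your height-$m_l$ estimate and then averaging out the unwanted levels with Lemma~\ref{convhull} yields only $Cm_l^{1/p}$, not $Cl^{1/p}$; and taking instead the length-$l$ chain $(n_{m_1},\dots,n_{m_l})\in[\M]^l$ bounds $\|\sum_i\eps_ix_{n_{m_1},\dots,n_{m_i}}\|$, which involves different tree nodes altogether. Neither reading of ``the estimate above applied to this chain'' produces the required $Cl^{1/p}$ for $\sum_i\eps_iy_{m_i}$, and since $m_l$ can be as large as $k$ the proof of Lemma~\ref{peps} (which needs $C|F_j|^{1/p}$ on each level set $F_j$) genuinely breaks down.

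The paper closes this gap by invoking the tree-$p$-Banach-Saks property \emph{separately for each} subset $m_1<\dots<m_l$ of $\{1,\dots,k\}$, so that the correct power $l^{1/p}$ appears for that subset; since there are only finitely many subsets, a further Ramsey pass then produces a single infinite $\M$ on which all of these subset estimates hold simultaneously for every $\bar n\in[\M]^k$, and only then is Lemma~\ref{peps} applied. The single up-front extraction you perform does not accomplish this.
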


\begin{proof}
(i) Let $C>0$ be a constant witnessing that $X$ has the tree-$p$-Banach-Saks property. Let $(x_{\bar{n}})_{\bar{n}\in[\N]^{\leq k}}$ be a weakly null sequence in $\partial B_X$. As $X$ has the tree $p$-Banach-Saks property with constant $C$, one gets that given  $m_1<\ldots < m_l\in\{1,\ldots,k\}$ and   $\M\subset \N$, there exists $\bar{n}\in[\M]^{k}$ such that
\[\Big\|\sum_{i=1}^l\varepsilon_i x_{n_{m_1},\ldots,n_{m_i}}\Big\|\leq Cl^{1/p},\]
for all $\varepsilon=(\varepsilon_i)_i\in\{-1,1\}^l$.  Since $|\{1,\ldots,k\}|^{\leq k}$ is finite, by Ramsey theory,  there exists an infinite  $\M\subset \N$ so that the inequality above holds for all $m_1<\ldots < m_l\in\{1,\ldots,k\}$  and all  $\bar{n}\in[\M]^{k}$.
 By Lemma \ref{peps}, the result follows.

The last statement follows from Proposition \ref{PropAUS}(ii).

(ii) The proof follows very similarly to the proof of Item (i). The only difference being that Lemma \ref{peps} is not needed, instead, we only need to use Lemma \ref{convhull}.
\end{proof}

\subsection{Nonlinear weakly sequentially continuous embeddings and AUSness}

We  now use the results above to obtain a stability result for weakly sequentially continuous coarse Lipschitz embeddings.

\begin{thm}\label{WCoarseLipEmbTree}
Let $p\in (1,\infty)$. Let $X$ and $Y$ be Banach spaces and assume that $Y$ satisfies upper $\ell_{p}$-tree estimates. If $X$ coarse Lipschitzly embeds into $Y$ by a map which is weakly sequentially continuous, then $X$ satisfies upper $\ell_{p'}$-tree estimates, for all $p'\in (1,p)$.
\end{thm}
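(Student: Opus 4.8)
The plan is to mimic the standard ``tree-extraction'' argument that one uses to transfer upper $\ell_p$-tree estimates under coarse Lipschitz maps, but being careful to exploit weak sequential continuity instead of only coarse Lipschitz control. Suppose $f\colon X\to Y$ is a weakly sequentially continuous coarse Lipschitz embedding, say $\omega_f(t)\le At+A$ and $\rho_f(t)\ge A^{-1}t-A$ for all $t\ge0$. Fix $p'\in(1,p)$ and fix a further auxiliary exponent $p''\in(p',p)$. Given a normalized weakly null tree $(x_{\bar n})_{\bar n\in[\N]^{\le k}}$ in $X$, I would first use a Gowers-type/gliding-hump stabilization to pass to a subtree (along an infinite $\M\subset\N$) along which, for a large scaling parameter $t$, the images $f(t\,\sum_{j=1}^{i}x_{n_1,\dots,n_j})$ behave like a weakly null tree in $Y$: the key point is that, along each branch, the ``increments'' $f\big(t\sum_{j\le i}x_{n_1,\dots,n_j}+tx_{\bar n,n}\big)-f\big(t\sum_{j\le i}x_{n_1,\dots,n_j}\big)$ form a weakly null net in $n$ because $x_{\bar n,n}\xrightarrow{w}0$, $f$ is weakly sequentially continuous, and weak-to-weak behaviour of the difference passes to the limit. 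After a diagonalization one obtains, inside $Y$, a genuine normalized (up to constants) weakly null tree of height $k$.

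Second, I would apply the hypothesis that $Y$ satisfies upper $\ell_p$-tree estimates to this tree in $Y$: along some branch $\bar n\in[\M]^k$ one gets $\big\|\sum_{j=1}^k a_j\,y_{\bar n,j}\big\|_Y\le C(\sum|a_j|^p)^{1/p}$ for all scalars $a_j$, where $y_{\bar n,j}$ is the (rescaled) $j$-th increment. Unwinding the telescoping sum, this bounds $\big\|f(t\sum_{j=1}^k x_{n_1,\dots,n_j})-f(0)\big\|_Y$, and hence via the lower modulus $\rho_f$ it bounds $t\big\|\sum_{j=1}^k x_{n_1,\dots,n_j}\big\|_X$ from above by roughly $A\big(Ctk^{1/p}+(\text{additive error}) \big)$, i.e.\ after dividing by $t$ and sending $t\to\infty$ we recover the tree-$p$-Banach-Saks inequality $\big\|\sum_{j=1}^k x_{n_1,\dots,n_j}\big\|_X\le C'k^{1/p}$ along $[\M]^k$. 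Doing this for all sign patterns (as in Proposition \ref{equivTreeBS}) and then invoking Lemma \ref{peps} (the de-interpolation from $\ell_p$-sign estimates to genuine $\ell_{p'}$ upper estimates, valid for any $p'<p$) upgrades this to: $X$ satisfies upper $\ell_{p'}$-tree estimates. Equivalently, one can phrase the conclusion as ``$X$ has the tree-$p$-Banach-Saks property'' and then quote Corollary \ref{treepBSimpliesAUS}(i) to get upper $\ell_{p'}$-tree estimates; I would likely present it the latter way to keep the bookkeeping light.

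The delicate bookkeeping step — and what I expect to be the main obstacle — is the first one: making precise in what sense the images $f$ of partial sums along the tree form a \emph{weakly null} tree in $Y$, and controlling the approximation errors coming from the affine ($+A$) terms in $\omega_f,\rho_f$ uniformly in $k$. One has to choose the dilation $t=t(k)$ large enough (e.g.\ $t\to\infty$ after $k$ is fixed) so that the additive constants are negligible relative to $tk^{1/p}$, while the weak-null extraction must be done \emph{level by level} through the tree, each level requiring a subsequence, so the diagonalization has to be organized so that the final infinite set $\M$ works simultaneously for all $2^k$ sign patterns and all $k$ coordinates. A clean way to handle the additive constants is to note that coarse Lipschitz embeddings of Banach spaces can be replaced, after rescaling, by genuinely Lipschitz maps on nets of arbitrarily large ``lattice spacing'', so that on the dilated copies $t\cdot X$ the map $f$ is effectively $2A$-Lipschitz with a vanishing additive defect; this reduces the problem to the purely Lipschitz case, where the weakly-null-tree transfer is exactly the argument behind Proposition \ref{PropAUS}/\cite{OdellSchlumprecht2006}. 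I would carry out the details in that reduced setting and then restate the conclusion for the original $f$.
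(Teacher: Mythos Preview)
Your approach is correct and is essentially the same as the paper's: define the increments $y_{\bar n}=f(\sum_{j\le i}x_{n_1,\dots,n_j})-f(\sum_{j<i}x_{n_1,\dots,n_j})$, observe this gives a semi-normalized weakly null tree in $Y$, apply the upper $\ell_p$-tree estimates there, telescope back, and then invoke Corollary~\ref{treepBSimpliesAUS}(i).

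One simplification: the ``delicate bookkeeping'' you anticipate in the first step is not actually needed. For fixed $\bar n=(n_1,\dots,n_{j-1})$, the sequence $(x_{\bar n,n})_n$ is weakly null, so $\sum_{i<j}x_{n_1,\dots,n_i}+x_{\bar n,n}\to \sum_{i<j}x_{n_1,\dots,n_i}$ weakly, and weak sequential continuity of $f$ gives \emph{directly} that $y_{\bar n,n}\to 0$ weakly. No gliding hump, no level-by-level subsequence extraction, no diagonalization is required to produce the weakly null tree in $Y$; it is weakly null on the nose. The only place a subset $\M$ enters is when you apply the $\ell_p$-tree estimate in $Y$ (and later Ramsey for the sign patterns), exactly as you describe. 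For the additive constants, the paper disposes of them by a single upfront rescaling $f\mapsto f(n\cdot)/n$ (which preserves weak sequential continuity) rather than carrying a parameter $t$ through and sending $t\to\infty$; your version works too but is slightly heavier.
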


\begin{proof}
Fix $C>0$ so that  $Y$ satisfies upper $\ell_p$-tree estimates with constant $C$.  Let $f:X\to Y $ be a weakly sequentially continuous coarse Lipschitz embedding. Without loss of generality, assume that $f(0)=0$. Fix $L,\eps>0$ so that 
\begin{equation}\label{cL}
L^{-1}\|x-y\|-\eps\leq \|f(x)-f(y)\|\leq L\|x-y\|+\eps,\ \ \text{for all} \ \ x,y\in X.
\end{equation}
Replacing $f$ with $x\mapsto f(nx)/n$, for $n\in\N$ large enough, we can assume that $L^{-1}-\eps>0$. 

Fix $k\in\N$ and  let $(x_{\bar{n}})_{\bar{n}\in[\N]^{\leq k}}$ be a weakly null sequence in $\partial B_X$.  Define a tree $(y_{\bar{n}})_{\bar{n}\in[\N]^{\leq k}}$ in $Y$ by setting $y_{n_1}=f(x_{n_1})$, for all $n_1\in\N$,  and \[y_{n_1,\ldots,n_j}=f(x_{n_1}+\ldots+ x_{n_1,\ldots,n_j})-f(x_{n_1}+\ldots+x_{n_1,\ldots,n_{j-1}}),\]
for all $(n_1,\ldots,n_j)\in[\N]^{\leq k}\setminus[\N]^{ 1}$.  As $(x_{\bar{n}})_{\bar{n}\in[\N]^{\leq k}}$ is weakly null and as $f$ is weakly sequentially continuous, it follows that $(y_{\bar{n}})_{\bar{n}\in[\N]^{\leq k}}$ is weakly null. By  \eqref{cL},  $\|y_{\bar{n}}\|\in(L^{-1}-\eps, L+\eps)$, for all $\bar{n}\in[\N]^{\leq k}$. Hence, $(y_{\bar{n}})_{\bar{n}\in [\N]^{\leq k}}$ is semi-normalized and it follows that the normalized tree $(y_{\bar{n}}/\|y_{\bar{n}}\|)_{\bar{n}\in [\N]^{\leq k}}$ is weakly null. By the choice of $C$, there exists $\M\subset \N$ so that 
\[\Big\|\sum_{j=1}^k y_{n_1,\ldots,n_j}\Big\|\leq (L+\eps)C k^{1/p},\]
for all $\bar{n}\in[\M]^{k}$. Since $f(x_{n_1}+\ldots+x_{n_1,\ldots,n_k})=\sum_{j=1}^ky_{n_1,\ldots,n_k}$,   \eqref{cL} implies  that 
\[\Big\|\sum_{j=1}^kx_{n_1,\ldots,n_j}\Big\|\leq L(L+\eps)Ck^{1/k}+\eps L,\]
for all $\bar{n}\in[\M]^{k}$. This shows that $X$ has the tree-$p$-Banach-Saks property. Hence, by Corollary \ref{treepBSimpliesAUS}(i), the result holds.
\end{proof}

If $p=\infty$, we obtain results to weakly sequentially continuous coarse embeddings. 

\begin{thm}\label{TheoremWCoarseEmbTree}
 Let $X$ and $Y$ be Banach spaces and assume that $Y$ satisfies upper $\ell_{\infty}$-tree estimates. If $X$ coarsely embeds into $Y$ by a map which is weakly sequentially continuous, then $X$ satisfies upper $\ell_{\infty}$-tree estimates.
\end{thm}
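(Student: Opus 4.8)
The plan is to follow the proof of Theorem \ref{WCoarseLipEmbTree}: we show that $X$ has the tree-$\infty$-Banach-Saks property and then invoke Corollary \ref{treepBSimpliesAUS}(ii). The only new point is that a coarse embedding $f\colon X\to Y$ carries just a finite modulus $\omega_f$ and an expansion modulus $\rho_f$ with $\lim_{t\to\infty}\rho_f(t)=\infty$, so in place of linear control of the increments of $f$ we feed $f$ suitably rescaled vectors.

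First I would translate so that $f(0)=0$, and fix once and for all a scale $\lambda>0$ with $\rho_f(\lambda)>0$, which exists since $\rho_f(t)\to\infty$. Fix $C>0$ witnessing that $Y$ satisfies upper $\ell_\infty$-tree estimates with constant $C$. Given $k\in\N$ and a weakly null tree $(x_{\bar n})_{\bar n\in[\N]^{\le k}}$ in $\partial B_X$, define a tree $(y_{\bar n})_{\bar n\in[\N]^{\le k}}$ in $Y$ by $y_{n_1}=f(\lambda x_{n_1})$ and, for $j\ge 2$,
\[
y_{n_1,\dots,n_j}=f\Big(\lambda\sum_{i=1}^{j}x_{n_1,\dots,n_i}\Big)-f\Big(\lambda\sum_{i=1}^{j-1}x_{n_1,\dots,n_i}\Big).
\]
Since consecutive arguments of $f$ differ by a vector of norm $\lambda$, we get $\rho_f(\lambda)\le\|y_{\bar n}\|\le\omega_f(\lambda)<\infty$ for every $\bar n$, so $(y_{\bar n})$ is semi-normalized. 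As $(x_{\bar n})$ is weakly null and $f$ is weakly sequentially continuous, for each $\bar n$ the argument of $f$ defining $y_{\bar n,n}$ converges weakly (in $n$) to a fixed vector, hence $(y_{\bar n,n})_n$ is weakly null; thus $(y_{\bar n})$ is a weakly null tree and so is the normalized tree $(y_{\bar n}/\|y_{\bar n}\|)_{\bar n\in[\N]^{\le k}}$ in $\partial B_Y$.

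Next I would apply the upper $\ell_\infty$-tree estimate of $Y$ to $(y_{\bar n}/\|y_{\bar n}\|)_{\bar n}$ with the coefficients $a_j=\|y_{n_1,\dots,n_j}\|$, obtaining a branch $(n_1,\dots,n_k)$ with
\[
\Big\|\sum_{j=1}^{k}y_{n_1,\dots,n_j}\Big\|\le C\max_{1\le j\le k}\|y_{n_1,\dots,n_j}\|\le C\,\omega_f(\lambda).
\]
The left side telescopes to $\big\|f\big(\lambda\sum_{j=1}^{k}x_{n_1,\dots,n_j}\big)-f(0)\big\|$, so by the expansion inequality $\rho_f\big(\lambda\big\|\sum_{j=1}^{k}x_{n_1,\dots,n_j}\big\|\big)\le C\,\omega_f(\lambda)$. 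As $\rho_f$ is nondecreasing with $\rho_f(t)\to\infty$, the set $\{t\ge 0:\rho_f(t)\le C\,\omega_f(\lambda)\}$ is bounded; letting $R$ be its supremum, we conclude $\big\|\sum_{j=1}^{k}x_{n_1,\dots,n_j}\big\|\le R/\lambda$. Since $R/\lambda$ depends only on $f$ and $C$, and not on $k$, this proves that $X$ has the tree-$\infty$-Banach-Saks property with constant $R/\lambda$, and Corollary \ref{treepBSimpliesAUS}(ii) finishes the proof. The only delicate point is this last conversion: unlike in the coarse Lipschitz setting, it relies merely on monotonicity and unboundedness of $\rho_f$, so the constant is not explicit, but it is $k$-independent, which is exactly what the tree-$\infty$-Banach-Saks property requires.
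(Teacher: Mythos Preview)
Your proof is correct and follows essentially the same route as the paper's: build the weakly null tree $(y_{\bar n})$ of increments, use the upper $\ell_\infty$-tree estimate in $Y$ on the normalized tree, telescope, and invert $\rho_f$ to get the tree-$\infty$-Banach-Saks property, then invoke Corollary \ref{treepBSimpliesAUS}(ii). The only cosmetic difference is that the paper rescales $f$ so that $\rho_f(1)>0$ and uses the affine upper bound $\|f(x)-f(y)\|\le L\|x-y\|+L$ (available since $X$ is a Banach space), whereas you keep an explicit scale $\lambda$ and only use $\omega_f(\lambda)<\infty$; the resulting constants $2LC$ versus $C\,\omega_f(\lambda)$ play the same role.
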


\begin{proof}
We proceed as in Theorem \ref{WCoarseLipEmbTree}. Precisely, fix $C>0$ so that  $Y$ satisfies upper $\ell_\infty$-tree estimates with constant $C$.  Let $f:X\to Y $ be a weakly sequentially continuous coarse  embedding with $f(0)=0$. By rescaling $f$ if necessary, assume that $\rho_f(1)>0$. For some  $L>0$, if follows that 
\begin{equation}
\rho_f(\|x-y\|)\leq \|f(x)-f(y)\|\leq L\|x-y\|+L,\ \ \text{for all} \ \ x,y\in X.
\end{equation}
Proceeding as in the proof of Theorem \ref{WCoarseLipEmbTree}, it follows that for all weakly null trees $(x_{\bar n})_{\bar n\in[\N]^{\leq k}}$ in $\partial B_X$, there exists an infinite  $\M\subset\N$ such that 
\[\rho_f\Big(\Big\|\sum_{j=1}^kx_{n_1,\ldots,n_k}\Big\|\Big)\leq 2LC,\]
for all $\bar n\in[\M]^k$. Since $f$ is coarse, this shows that $X$ has the tree-$\infty$-Banach-Saks property. By Corollary \ref{treepBSimpliesAUS}(ii), $X$ satisfies upper $\ell_{\infty}$-tree estimates.
\end{proof}

In the case where the target space is $c_0$, the proof of Theorem \ref{WCoarseLipEmbTree}  allows us to get a much stronger result.

\begin{proof}[Proof of Theorem \ref{ThmWSCCoarseIntoc0}]
A (slightly) new terminology is needed. A Banach space $X$ satisfies  \emph{infinite upper $\ell_\infty$-tree estimates} if there exists $C>0$ such that for all weakly null trees $(x_{\bar n})_{\bar n\in[\N]^{<\omega}}$ in $\partial B_X$, there exists  $\bar n\in [\N]^\omega$ such that  \[\sup_k\Big\|\sum_{i=1}^kx_{n_1,\ldots,n_i}\Big\|\leq C.\]
Clearly, $c_0$ has this property with $1+\eps$ for all $\eps>0$. 

Let $X$ be a Banach space not containing $\ell_1$ and assume that $X$ coarsely embeds into $c_0$. Proceeding similarly as in the   proof of Theorem \ref{TheoremWCoarseEmbTree}, we obtain  that $X$ must satisfy infinite upper $\ell_\infty$-tree estimates. By Theorem 4 of \cite{OdellSchlumprecht2006}, $X$ embeds isomorphically into $c_0$.
\end{proof}

\begin{proof}[Proof Theorem \ref{CorAUSablenessStableWSCCoarseLipEmb}]
By Proposition \ref{PropAUS}(i) and Corollary \ref{treepBSimpliesAUS}, if $Y$ is $p$-AUSable, then $Y$ satisfies  upper $\ell_{p'}$-tree estimates, for all $p'\in (1,p)$. Hence, by Theorem \ref{WCoarseLipEmbTree}, $X$ satisfies upper $\ell_{p'}$-tree estimates, for all $p'\in (1,p)$. As $X$ has separable dual, this gives us that $X$ is $p'$-AUSable, for all $p'\in (1,p)$ (see Proposition \ref{PropAUS}(ii)).

The last statement follows from the fact that every AUS space is $p$-AUS, for some $p\in (1.\infty)$ (see \cite{Raja2013},  Theorem 1.2).
\end{proof}

A Banach space $X$ is \emph{asymptotic-$\ell_\infty$} if there exists $C>0$ such that for all $k\in\N$, 
\[\exists X_1\in\cof(X),\ \forall x_1\in \partial B_{X_1},\ \ldots, \ \exists X_k\in\cof(X),\ \forall x_k\in \partial B_{X_k}\]
the finite sequence $(x_n)_{n=1}^k$ is $C$-equivalent to the standard basis of $\ell_\infty$.\footnote{This is often called \emph{asymptotic-$c_0$} in the literature.}

\begin{prop}\label{PropSepNotContl1}
Let $X$ be a separable Banach space not containing $\ell_1$ and satisfying upper $\ell_\infty$-estimates. Then $X$ is asymptotic-$\ell_\infty$. 
\end{prop}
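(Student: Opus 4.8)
The plan is to verify the asymptotic game definition of asymptotic-$\ell_\infty$ directly, with a constant depending only on the upper $\ell_\infty$-tree estimate constant $C$ of $X$ (which we may assume is $\ge 1$). Fix $k\in\N$. The lower $\ell_\infty^k$-estimate in the game is essentially free: having seen normalized vectors $x_1,\dots,x_i$, I would always choose the next subspace $X_{i+1}$ inside a cofinite-dimensional subspace almost orthogonal to $\mathrm{span}(x_1,\dots,x_i)$; done greedily this keeps $(x_i)_{i=1}^k$ uniformly (say) $2$-basic, so the coefficient functionals have norm $\le 4$ and $\|\sum_i a_ix_i\|\ge\tfrac14\max_i|a_i|$, with the constant independent of $k$. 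Hence the whole content is to additionally arrange $\|\sum_{i=1}^k\varepsilon_ix_i\|\le C$ for all sign vectors $\varepsilon\in\{-1,1\}^k$; by Lemma \ref{convhull} this upgrades to $\|\sum_i a_ix_i\|\le C\max_i|a_i|$, and then $(x_i)_{i=1}^k$ is $\max(C,4)$-equivalent to the $\ell_\infty^k$-basis.

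I would first record a preliminary reduction: under our hypotheses $X^*$ is separable. Indeed, $X$ being separable it suffices that $\mathrm{Sz}(X)=\omega$, and if $\mathrm{Sz}(X)>\omega$ one can extract, for every $n$, a weakly null tree of height $n$ in $B_X$ all of whose branches $(x_1,\dots,x_n)$ satisfy $\|\sum_{j=1}^n\theta_jx_j\|\ge\delta n$ for some fixed $\delta>0$ and a suitable sign choice $\theta_j$ (this is the usual translation of the $\delta$-Szlenk derivation never terminating, and is precisely where $\ell_1\not\hookrightarrow X$ is used, so that the tree may be taken weakly null); applying the upper $\ell_\infty$-tree estimate to such a tree gives a branch on which $\|\sum_j a_jx_j\|\le C\max_j|a_j|$ for all scalars, in particular $\le C$ for the bad signs, contradicting $\ge\delta n$ once $n>C/\delta$. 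Consequently the weak topology on $B_X$ is metrizable.

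Now fix $k$ and suppose, for contradiction, that the upper estimate with constant $C$ cannot be achieved (when the quantified subspaces are restricted to the span-avoiders of the first paragraph — a finite-dimensional restriction that affects nothing below). Call a finite sequence $(x_1,\dots,x_i)$ of norm-one vectors with $i<k$ \emph{bad} if for every $E\in\cof(X)$ there is $y\in\partial B_E$ making $(x_1,\dots,x_i,y)$ bad, and call a length-$k$ sequence bad if $\|\sum_j\varepsilon_jx_j\|>C$ for some $\varepsilon\in\{-1,1\}^k$; the assumed failure is exactly that the empty sequence is bad. If $(x_1,\dots,x_i)$ is bad with $i<k$, the set of norm-one $y$ extending it to a bad sequence meets every $\partial B_E$, $E\in\cof(X)$, hence has $0$ in its weak closure, hence (metrizability) contains a weakly null sequence. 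Building level by level with the invariant that every already-constructed branch is bad, this produces a weakly null tree $(x_{\bar n})_{\bar n\in[\N]^{\le k}}$ in $\partial B_X$ all of whose branches are bad length-$k$ sequences: for each $\bar n\in[\N]^k$ there is $\varepsilon(\bar n)\in\{-1,1\}^k$ with $\|\sum_{j=1}^k\varepsilon_j(\bar n)\,x_{(n_1,\dots,n_j)}\|>C$. Colouring $[\N]^k$ by $\bar n\mapsto\varepsilon(\bar n)$ and applying Ramsey's theorem yields an infinite $\M\subseteq\N$ on which this colour is constant, equal to some $\bar\varepsilon$; then the upper $\ell_\infty$-tree estimate applied to the weakly null tree $(x_{\bar m})_{\bar m\in[\M]^{\le k}}$ with scalars $a_j=\varepsilon_j$ produces a branch $\bar n\in[\M]^k$ with $\|\sum_{j=1}^k\varepsilon_jx_{(n_1,\dots,n_j)}\|\le C$, contradicting the previous sentence. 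Hence the upper estimate holds for every $k$, and with the first paragraph this shows $X$ is asymptotic-$\ell_\infty$.

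The routine parts are the span-avoidance for the lower estimate (standard, the only care being to keep the basis constant bounded independently of $k$) and the Ramsey/tree-estimate endgame. The one genuinely delicate point is the implication ``the bad set meets every cofinite-dimensional sphere $\Rightarrow$ the bad set contains a weakly null sequence'', which is exactly where separability of $X^*$ is used; and establishing $X^*$ separable is in turn where the hypothesis $\ell_1\not\hookrightarrow X$ enters, via the Szlenk index as in the second paragraph. This is the step I expect to need the most care to write out honestly.
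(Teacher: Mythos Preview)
Your approach is genuinely different from the paper's. The paper argues via asymptotic models: given any asymptotic model $(e_i)_i$ of $X$ generated by a normalized weakly null array $(z^{(i)}_j)_{i,j}$, it builds the weakly null tree $x_{\bar n}=z^{(|\bar n|)}_{\max\bar n}$, applies the upper $\ell_\infty$-tree estimate to bound $\|\sum_{i\le k}e_i\|$ uniformly in $k$, and (after reducing to unconditional $(e_i)_i$ via results in Freeman--Odell--Sar{\i}--Zheng) concludes that every such model is $c_0$. It then invokes Theorem~4.6 of that paper --- a substantial structural theorem --- to deduce that $X$ is asymptotic-$c_0$. Your route is instead a direct game argument: assume the game fails, recursively build a weakly null tree all of whose length-$k$ branches violate the upper estimate for some sign vector, Ramsey-stabilize that sign vector, and contradict the tree hypothesis. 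The core of that argument (the bad-set recursion, the Ramsey step, and the final contradiction) is correct and pleasant.

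The genuine gap is your reduction to $X^*$ separable. Your Szlenk sketch is not right as written. Unwinding $s_\epsilon^n(B_{X^*})\neq\emptyset$ yields a tree of functionals $f_{\bar m}\in B_{X^*}$ with $f_{\bar m,j}\to f_{\bar m}$ weak$^*$ and $\|f_{\bar m,j}-f_{\bar m}\|\ge\epsilon/2$; since these telescope inside $B_{X^*}$, the branch sums of the increments are \emph{bounded}, and the dual picture does not produce weakly null trees in $X$ whose branches satisfy a lower bound of order $\delta n$. Moreover, the standard passage from Szlenk data on $B_{X^*}$ to \emph{weakly null} trees in $X$ (as in the $\ell_1^+$-tree index correspondence) is typically carried out under the standing hypothesis that $X^*$ is separable --- exactly what you are trying to establish --- so the argument as sketched is circular. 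The implication ``separable, $\ell_1\not\hookrightarrow X$, upper $\ell_\infty$-tree estimates $\Rightarrow X^*$ separable'' is true, but I do not know an elementary proof; filling it honestly appears to require machinery on the order of the Freeman--Odell--Sar{\i}--Zheng theorem the paper cites, so the apparent gain in elementarity is largely illusory at this step.
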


\begin{proof}
 Let $(z_{j}^{(i)})_{j,i\in\N}$ be an  \emph{asymptotic model generated by normalized weakly null array}, i.e., $(z_{j}^{(i)})_{j,i\in\N}$ is a family in $\partial B_X$ such that (i) $\wlim_jz_j^{(i)}=0$, for all $i\in \N$,  (ii) there exist a sequence of positive reals $(\eps_k)_k$ converging to zero and a sequence $(e_i)_i$ in some Banach space $E$ such that for all $k\in\N$, all $(a_i)_{i=1}^k\in [-1,1]^n$ and all $k\leq n_1<\ldots < n_k\in\N$  it follows that 
\[\Big|\Big\|\sum_{i=1}^ka_iz^{(i)}_{n_i}\Big\|-\Big\|\sum_{i=1}^ka_ie_i\Big\|_E\Big|<\eps_k.\]
Fix $(e_i)_i$ and $(\eps_k)_k$ as above.

\begin{claim} The sequence $(e_i)_i$ is equivalent to the standard unit basis of $c_0$.
\end{claim}

\begin{proof}
By Proposition 4.1 and Remark 4.7.5 of \cite{FreemanOdellSarıZheng2018}, there is no loss of generality to assume that $(e_i)_i$ is unconditional. Fix $C>0$ so that $X$ satisfies upper $\ell_\infty$-estimates with constant $C$, without loss of generality, assume that $C\geq \sup_k\eps_k$. Let $k\in\N$ and for each $\bar n=(n_1,\ldots,n_i)\in [\N]^{\leq k}$, let $x_{\bar n}= z_{n_i}^{(i)}$. So, $(x_{\bar n})_{\bar n\in [\N]^{\leq k}}$ is a normalized weakly null tree. By the choice of $C$, pick $\bar n\in[\N]^k$ with $\bar n\geq k$ so that $\|\sum_{j=1}^kx_{n_1,\ldots,n_j}\|\leq C$. It follows that $\|\sum_{i=1}^ke_i\|_E\leq 2C$.  Since the constant $2C$ is independent on $k$ and since $(e_i)_i$ is unconditional, the result follows. 
\end{proof}

 By Theorem 4.6 of \cite{FreemanOdellSarıZheng2018}, this shows that $X$ is asymptotic-$\ell_\infty$.
\end{proof}

It was proved in \cite{BaudierLancienMotakisSchlumprecht2018} that if a Banach space $X$ coarsely embeds into a reflexive asymptotic-$\ell_\infty$ space, then $X$ is also reflexive and asymptotic-$\ell_\infty$.  Theorem \ref{TheoremWCoarseEmbTree} and Proposition \ref{PropSepNotContl1} gives us the following related result.  

\begin{cor}
 Let $X$ be a separable Banach space not containing $\ell_1$ and let  $Y$ be a Banach space. Assume that $X$ coarsely embeds into $Y$ by a weakly sequentially continuous map. If $Y$ is asymptotic-$\ell_\infty$,  then $X$ is asymptotic-$\ell_\infty$.\qed
\end{cor}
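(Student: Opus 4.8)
The plan is to use \emph{upper $\ell_\infty$-tree estimates} as the bridge between the two asymptotic-$\ell_\infty$ conditions. Concretely, I would argue the chain: (a) $Y$ asymptotic-$\ell_\infty$ implies $Y$ satisfies upper $\ell_\infty$-tree estimates; (b) since $X$ coarsely embeds into $Y$ by a weakly sequentially continuous map, Theorem \ref{TheoremWCoarseEmbTree} transfers this property to $X$; (c) since $X$ is separable and does not contain $\ell_1$, Proposition \ref{PropSepNotContl1} converts upper $\ell_\infty$-tree estimates on $X$ back into asymptotic-$\ell_\infty$ness. Steps (b) and (c) are immediate invocations of results already established, so the whole proof reduces to step (a).

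For step (a), fix $C$ witnessing that $Y$ is asymptotic-$\ell_\infty$, and let $(x_{\bar n})_{\bar n\in[\N]^{\leq k}}$ be a weakly null tree in $\partial B_Y$. The idea is to play the defining quantifier alternation (the ``asymptotic-$\ell_\infty$ game'') of length $k$ along a branch of the tree. The one input needed is the classical fact that a weakly null sequence almost lives in any prescribed finite-codimensional subspace: if $E=\bigcap_{i=1}^m\ker\phi_i\in\cof(Y)$, then the open mapping theorem applied to $y\mapsto(\phi_1(y),\ldots,\phi_m(y))$ yields a constant $K_E$ with $\mathrm{dist}(y,E)\leq K_E\max_i|\phi_i(y)|$ for all $y$, whence $\mathrm{dist}(z_n,E)\to 0$ for every weakly null sequence $(z_n)$. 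Now run the game: when the subspace-player produces $Y_1\in\cof(Y)$, pick $n_1$ so large that $x_{n_1}$ lies within $2^{-1}$ in norm of some $x'_{n_1}\in\partial B_{Y_1}$; when the subspace-player answers $x'_{n_1}$ with $Y_2\in\cof(Y)$, use weak nullity of the (already determined) sequence $(x_{n_1,n})_{n>n_1}$ to pick $n_2>n_1$ with $x_{n_1,n_2}$ within $2^{-2}$ of some $x'_{n_1,n_2}\in\partial B_{Y_2}$; continue for $k$ rounds. The resulting vectors $(x'_{n_1,\ldots,n_j})_{j=1}^k$ are $C$-equivalent to the unit vector basis of $\ell_\infty^k$, so in particular $\|\sum_{j=1}^k a_j x'_{n_1,\ldots,n_j}\|\leq C\max_j|a_j|$, while $\sum_{j=1}^k\|x_{n_1,\ldots,n_j}-x'_{n_1,\ldots,n_j}\|\leq 1$. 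Hence
\[\Big\|\sum_{j=1}^k a_j x_{n_1,\ldots,n_j}\Big\|\leq \Big\|\sum_{j=1}^k a_j x'_{n_1,\ldots,n_j}\Big\|+\max_j|a_j|\leq (C+1)\max_j|a_j|\]
for all scalars $a_1,\ldots,a_k$, so $Y$ satisfies upper $\ell_\infty$-tree estimates with constant $C+1$, which is step (a).

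The main obstacle is step (a), and the delicate point there is the scheduling of the game: one must ensure that the subspace $Y_j$ handed over at round $j$ depends only on the previously fixed vectors $x'_{n_1},\ldots,x'_{n_1,\ldots,n_{j-1}}$ (hence only on $n_1,\ldots,n_{j-1}$), so that the weak nullity of $(x_{n_1,\ldots,n_{j-1},n})_n$ --- a property of the tree that holds once $n_1,\ldots,n_{j-1}$ are chosen --- can legitimately be invoked to locate $n_j$. With this bookkeeping in place there is no circularity, and the perturbation estimate above is routine (choose the $j$-th approximation error below $2^{-j}$ and renormalize a nearby element of $Y_j$). Note finally that no separability or $\ell_1$-freeness hypothesis on $Y$ enters step (a); those assumptions are used only on $X$, precisely to apply Proposition \ref{PropSepNotContl1} in step (c).
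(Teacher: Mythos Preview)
Your proposal is correct and follows exactly the route the paper intends: the corollary is stated with a \qed and is meant to be an immediate consequence of Theorem \ref{TheoremWCoarseEmbTree} and Proposition \ref{PropSepNotContl1}, which are precisely your steps (b) and (c). The paper treats your step (a) --- that asymptotic-$\ell_\infty$ implies upper $\ell_\infty$-tree estimates --- as a standard fact not requiring comment; your game-playing argument with the distance-to-finite-codimensional-subspace lemma is the right way to spell it out, and your bookkeeping about the dependence of $Y_j$ on only $n_1,\ldots,n_{j-1}$ is exactly what makes it work.
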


\section{Complexity of some asymptotic notions and applications}\label{SectionComplexity}

In this section, we mainly deal with the the difference between the  alternating $p$-Banach-Saks property and $p$-asymptotic uniform smoothness. Precisely, in the first result of this section, we  construct an example of  a space which has the alternating $p$-Banach-Saks property but does not have an equivalent  asymptotically uniformly  smooth norm. Then, we show that those classes have different complexities (Theorem \ref{pBScompcoana}). We finish this section proving  a universality  result of independent interest. More specifically, we show that if a separable Banach space contains all separable  reflexive Banach spaces coarsely, then the space must be \emph{coarsely universal} (see Theorem \ref{ThmCompBanachSaksUniversal}), i.e.,  every separable Banach space coarsely embeds into $X$.

\begin{thm}\label{pBSnottpBS}
Let $p\in (1,\infty)$. There exists a separable reflexive Banach space $X$ which has the alternating $p$-Banach-Saks property but does not have the tree $q$-Banach-Saks property, for any $q\in (1,\infty)$. In particular, $X$ is not AUSable.
\end{thm}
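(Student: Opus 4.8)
The plan is to build $X$ as an $\ell_p$-sum (or a closely related Nakano/Orlicz-type modification) of finite-dimensional spaces that carry independent "alternating-sign" obstructions of growing length, tuned so that the alternating $p$-Banach-Saks property survives while the tree-$q$-Banach-Saks property fails for every $q$. The natural candidate is a Tsirelson-type or James-tree-type space: take $X = (\sum_k F_k)_{\ell_p}$ where each $F_k$ is a finite-dimensional space on which the \emph{summing} basis (so that $\|\sum_{j=1}^m \pm e_j\|$ stays bounded for \emph{alternating} signs but $\|\sum_{j=1}^m e_j\|$ grows like $m$) is present on a block of length $k$. Reflexivity will follow because each $F_k$ is finite-dimensional and the outer sum is $\ell_p$ with $p\in(1,\infty)$, hence $X$ is reflexive.

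First I would verify the alternating $p$-Banach-Saks property. Given a sequence $(x_n)_n$ in $B_X$ and $k\in\N$, use a gliding-hump / subsequence argument: pass to a subsequence that is a small perturbation of a block sequence with respect to the natural decomposition of $X$. For a block sequence, the alternating sum $\sum_{i=1}^k(-1)^i x_{n_i}$ decomposes across coordinates; in each coordinate block the summing-basis estimate gives that alternating sums of signs are $O(1)$, and then the $\ell_p$-aggregation over the (at most $k$) active blocks contributes the factor $k^{1/p}$. One has to handle the perturbation error with a standard $\eps/2^n$ argument and invoke the remark after the definition of the alternating $p$-Banach-Saks property (equivalence with the colouring formulation via Ramsey theory, \cite{TodorcevicBook2010}, Theorem 1.3).

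Next I would show $X$ fails the tree-$q$-Banach-Saks property for every $q\in(1,\infty)$. Here the point is that a \emph{weakly null tree} is much more flexible than a weakly null sequence: along a suitable branch one can arrange that at level $j$ one selects the $j$-th summing-basis vector $e_j$ of the block $F_k$ (for $k$ large), so that $\sum_{j=1}^k x_{n_1,\dots,n_j}$ is (close to) $\sum_{j=1}^k e_j$ inside a single $F_k$, whose norm grows like $k$ (or at any prescribed rate faster than $k^{1/q}$ for all $q$ — choosing the growth of the summing norm in $F_k$ to be, say, $k/\log k$ or simply $k$ handles all $q>1$ at once). Since this can be done for every $k$ with a tree of height $k$, no constant $C$ and no exponent $q\in(1,\infty)$ can satisfy the tree-$q$-Banach-Saks inequality. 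The construction of such a branch is where I expect the main technical work: one must ensure the tree is genuinely weakly null at every node (which forces the supports to glide off to infinity), reconcile this with landing inside a fixed finite-dimensional $F_k$, and control the cross terms coming from other coordinates — this is typically arranged by an interlacing/diagonalisation so that the "bad" contribution concentrated in $F_k$ dominates while everything else is negligible. This is the main obstacle: making the weakly-null-tree witness both legitimately weakly null and norm-large simultaneously.

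Finally, the last sentence "$X$ is not AUSable" is immediate from the earlier results of the excerpt: by Proposition~\ref{PropAUS}(i), if $X$ were $q$-AUSable for some $q\in(1,\infty)$ it would have the tree-$q$-Banach-Saks property, contradicting what was just proved; and every AUSable space is $q$-AUSable for some $q\in(1,\infty)$ (see \cite{Raja2013}, Theorem 1.2). Hence $X$ is not AUSable, which completes the proof.
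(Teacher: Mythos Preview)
Your construction has a genuine and fatal gap, and it is precisely the obstacle you flag at the end: in a flat $\ell_p$-sum of finite-dimensional pieces, you \emph{cannot} build a normalized weakly null tree whose branch sums grow faster than $k^{1/p}$. In fact $X=(\bigoplus_k F_k)_{\ell_p}$ with $\dim F_k<\infty$ is $p$-AUS for its natural norm: given $x\in\partial B_X$, choose $N$ with $\|x-P_Nx\|$ small, and then for $y\in\ker P_N$ one has $\|x+ty\|\le (\|P_Nx\|^p+t^p\|y\|^p)^{1/p}+o(1)$, so $\overline\rho_X(t)\le Ct^p$. By Proposition~\ref{PropAUS}(i) such a space has the tree-$p$-Banach-Saks property, so it cannot serve as the example. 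The internal ``summing'' norm you place on each $F_k$ is irrelevant here: once the tree is weakly null, the vectors at successive levels along any branch become (after passing to a full subtree) essentially supported in different summands $F_k$, and the outer $\ell_p$-norm collapses the branch sum to order $k^{1/p}$ regardless of what happens inside any single $F_k$. There is no ``interlacing/diagonalisation'' that repairs this, because weak nullity forces the supports to escape every fixed $F_k$.

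The paper resolves exactly this tension by replacing the flat index set $\N$ with a \emph{tree} index set. For a well-founded tree $T$ it defines a norm on $c_{00}(T)$ by
\[
\|x\|=\sup\Big\{\Big(\sum_{i}\|x_{|I_i}\|_{\ell_1}^p\Big)^{1/p}: I_1,\ldots,I_n\ \text{incomparable segments of }T\Big\},
\]
so along a branch the norm is $\ell_1$, while across incomparable pieces it is $\ell_p$. Taking $T$ to be the Schreier tree $S$ gives a reflexive space (by transfinite induction, since $\ell_p$-sums of reflexive spaces are reflexive) with the weak $p$-Banach-Saks property (Proposition~\ref{pBSstablelpsum}, again by transfinite induction), hence the alternating $p$-Banach-Saks property. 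For the negative part, the \emph{unit basis} $(x_{\bar n})_{\bar n\in[\N]^{\le k}}$ of the subspace $X_{p,\ell_1,[\N]^{\le k}}\subset X_{p,\ell_1,S}$ is a normalized weakly null tree (the successors of a node are incomparable, hence weakly null), and along every branch $\|\sum_{j=1}^k x_{n_1,\ldots,n_j}\|=k$, killing tree-$q$-Banach-Saks for all $q$. The tree indexing is what lets the weakly null directions (incomparable nodes) coexist with the $\ell_1$-growth direction (segments); this is exactly the structure your flat sum lacks.
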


In order to prove Proposition \ref{pBSnottpBS}, we  first show that the weak $p$-Banach-Saks property is stable under $\ell_p$-sums. Let $(X_n,\|\cdot\|_n)$ be a sequence of Banach spaces. Define the  \emph{$\ell_p$-sum of $(X_n,\|\cdot\|_n)$} as the set of sequences $(x_n)_n$, with $x_n\in X_n$, for all $n\in\N$, so that
\[\|(x_n)_n\|\coloneqq \Big(\sum_{n=1}^\infty\|x_n\|_n^p\Big)^{1/p}<\infty.\] 
Denote this space by $(\oplus_nX_n)_{\ell_p}$. The norm $\|\cdot\|$ defined above makes $(\oplus_nX_n)_{\ell_p}$ into a Banach space.

\begin{prop}\label{pBSstablelpsum}
Let $p\in(1,\infty)$ and $C\geq 1$. Let $(X_n,\|\cdot\|_n)_n$ be a sequence of Banach spaces such that, for all $n\in\N$, $X_n$ has the weak $p$-Banach-Saks property with constant $C+\varepsilon$, for all $\varepsilon>0$. Then $(\oplus_n X_n)_{\ell_p}$ has the weak $p$-Banach-Saks property with constant $C+\varepsilon$, for all $\varepsilon>0$.
\end{prop}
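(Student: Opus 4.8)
The plan is to fix a weakly null sequence $(\mathbf{x}^{(j)})_j$ in the unit ball of $(\oplus_n X_n)_{\ell_p}$, write $\mathbf{x}^{(j)}=(x^{(j)}_n)_n$ with $x^{(j)}_n\in X_n$, and produce, for a given $k\in\N$, an infinite set $\M\subset\N$ on which the alternating (here just the plain, since the sequence is weakly null) sums of $k$ terms have norm at most $(C+\varepsilon)k^{1/p}$. The key structural fact is that weak convergence to $0$ in $(\oplus_n X_n)_{\ell_p}$ means coordinatewise weak convergence to $0$ in each $X_n$ together with a uniform tail control: for every $\delta>0$ there is an $N$ with $\sum_{n>N}\|x^{(j)}_n\|_n^p<\delta$ for all $j$ (this follows because the sequence is bounded and, by a gliding-hump/standard argument, one cannot have mass escaping to infinity along the coordinates without destroying weak nullity — more carefully, pass to a subsequence for which the tails are uniformly small).

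First I would, given $k$ and $\varepsilon>0$, choose $\delta>0$ small (to be calibrated against $k$ and $\varepsilon$ at the end) and pass to a subsequence of $(\mathbf{x}^{(j)})_j$ so that $\sum_{n>N}\|x^{(j)}_n\|_n^p<\delta$ for all $j$, for some fixed $N=N(\delta)$. Then I would handle the finitely many blocks $X_1,\dots,X_N$ simultaneously: since each $(x^{(j)}_n)_j$ is weakly null in $X_n$ and $X_n$ has the weak $p$-Banach-Saks property with constant $C+\varepsilon'$, a diagonal/Ramsey argument over $n\in\{1,\dots,N\}$ gives one infinite $\M\subset\N$ such that for every $n\le N$ and every $j_1<\dots<j_k$ in $\M$, $\big\|\sum_{i=1}^k x^{(j_i)}_n\big\|_n\le (C+\varepsilon')k^{1/p}$. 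Finally I would estimate, for $j_1<\dots<j_k\in\M$,
\[
\Big\|\sum_{i=1}^k \mathbf{x}^{(j_i)}\Big\|^p=\sum_{n=1}^\infty\Big\|\sum_{i=1}^k x^{(j_i)}_n\Big\|_n^p
=\sum_{n\le N}\Big\|\sum_{i=1}^k x^{(j_i)}_n\Big\|_n^p+\sum_{n>N}\Big\|\sum_{i=1}^k x^{(j_i)}_n\Big\|_n^p.
\]
The first sum is at most $(C+\varepsilon')^p\,k$ by the block estimate (summing the $p$-th powers and using $\sum_{n\le N}(\text{something})\le$ the single bound; one must be a little careful here — actually one gets $\sum_{n\le N}\|\sum_i x^{(j_i)}_n\|_n^p$ bounded by $k^{p/p}$ times $(C+\varepsilon')^p$ only after noting $\sum_n \|\sum_i x^{(j_i)}_n\|_n^p \le (\sum_i \|\mathbf x^{(j_i)}\|)^p$ fails to be tight, so instead I would apply the weak $p$-Banach-Saks property of the whole finite $\ell_p$-sum $(\oplus_{n\le N}X_n)_{\ell_p}$, which itself has the property with constant close to $C$ by an easy direct argument, or iterate the two-summand case). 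The tail sum is crudely bounded using the triangle inequality in $\ell_p$ of the tails: $\big(\sum_{n>N}\|\sum_i x^{(j_i)}_n\|_n^p\big)^{1/p}\le \sum_{i=1}^k\big(\sum_{n>N}\|x^{(j_i)}_n\|_n^p\big)^{1/p}\le k\delta^{1/p}$, which is $\le \varepsilon k^{1/p}$ once $\delta\le (\varepsilon/k^{(p-1)/p})^p\cdot$(constant); combining gives the desired bound with constant $C+O(\varepsilon)$.

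The main obstacle I anticipate is the bookkeeping in the finite-block estimate: one cannot simply add the $k^{1/p}$ bounds across the $N$ blocks (that would give $N^{1/p}k^{1/p}$, which blows up with $N=N(\delta)$). The right move is to treat $(\oplus_{n\le N}X_n)_{\ell_p}$ as a single space with the weak $p$-Banach-Saks property — proving that finite $\ell_p$-sums inherit the property with essentially the same constant is the genuine content and is cleanest by induction on the number of summands, i.e.\ first establish that if $Y,Z$ each have weak $p$-Banach-Saks with constant $C+\varepsilon$ then so does $(Y\oplus Z)_{\ell_p}$ (here the two weak $p$-Banach-Saks applications can be combined via Ramsey on one infinite set, and the $p$-additivity of the norm gives $\|(y,z)\|^p=\|y\|^p+\|z\|^p$ so the $p$-th powers of the two $k^{1/p}$-bounds add to $2(C+\varepsilon)^p k$, whose $p$-th root is $(C+\varepsilon)(2k)^{1/p}$ — wait, that is the wrong growth, so in fact one must be more careful and observe that the sum is over the \emph{same} indices $j_1<\dots<j_k$, and the norm bound one wants is on $\|\sum_i(y_{j_i},z_{j_i})\|$, whose $p$-th power is $\|\sum_i y_{j_i}\|^p+\|\sum_i z_{j_i}\|^p\le 2(C+\varepsilon)^p k$, giving constant $2^{1/p}(C+\varepsilon)$; since this still degrades, the correct normalization uses that $(y_{j})_j$ and $(z_j)_j$ have norms bounded by the \emph{coordinates} of a unit vector, i.e.\ $\sum_j$-type control, so one should instead track, for each weakly null $\mathbf w^{(j)}=(y^{(j)},z^{(j)})$ in $B_{(Y\oplus Z)_{\ell_p}}$, the quantities $a=\limsup\|y^{(j)}\|$, $b=\limsup\|z^{(j)}\|$ with $a^p+b^p\le 1$, apply weak $p$-Banach-Saks to the \emph{rescaled} sequences $y^{(j)}/\|y^{(j)}\|$ etc., obtaining $\|\sum_i y^{(j_i)}\|\le (C+\varepsilon)(a+o(1))k^{1/p}$ and likewise for $z$, so $\|\sum_i \mathbf w^{(j_i)}\|^p\le (C+\varepsilon)^p k\,(a^p+b^p)+o(k)\le (C+\varepsilon)^p k+o(k)$, which is exactly the bound with constant $C+\varepsilon$). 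This rescaling trick, pushed through the countable $\ell_p$-sum with the uniform tail estimate, is what makes the constant not degrade, and it is the step I would write out most carefully.
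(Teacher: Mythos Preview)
Your ``rescaling trick'' in the final paragraph --- tracking the coordinate-norm limits $a_n=\lim_j\|x^{(j)}_n\|_n$ with $\sum_n a_n^p\le 1$ and applying the weak $p$-Banach-Saks hypothesis in each $X_n$ to get $\big\|\sum_i x^{(j_i)}_n\big\|_n\le (C+\varepsilon)(a_n+o(1))k^{1/p}$, so that the $p$-th powers sum to roughly $(C+\varepsilon)^p k\sum_n a_n^p\le (C+\varepsilon)^p k$ --- is exactly the right idea for the finite-block part, and it matches what the paper does.

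The genuine gap is your tail estimate. The claimed ``uniform tail control'' --- that for weakly null $(\mathbf x^{(j)})_j$ in $B_{(\oplus_n X_n)_{\ell_p}}$ one can arrange (even after passing to a subsequence) an $N$ with $\sum_{n>N}\|x^{(j)}_n\|_n^p<\delta$ for \emph{all} $j$ --- is simply false: the unit vector basis of $\ell_p$ is a weakly null sequence in $B_{\ell_p}$ for which no subsequence has uniformly small tails. Consequently your bound $\big(\sum_{n>N}\|\sum_i x^{(j_i)}_n\|_n^p\big)^{1/p}\le k\delta^{1/p}$ is not available, and without it you have no way to keep the constant from blowing up as $N=N(\delta)\to\infty$.

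The paper repairs precisely this point with one additional idea you are missing: form the \emph{norm-profile} sequence $z_j=\sum_n\|P_n(x_j)\|_n\,e_n\in B_{\ell_p}$, pass to a subsequence with weak limit $z=\sum_n a_n e_n$, and then apply the weak $p$-Banach-Saks property of $\ell_p$ itself (constant $1+\varepsilon$) to the weakly null sequence $(z_j-z)_j$, whose norms are roughly $(1-\sum_{n\le N}a_n^p)^{1/p}$. This yields, for a suitable infinite $\M$ and all $n_1<\dots<n_k\in\M$, a bound of order $(1-\sum_{n\le N}a_n^p)^{1/p}k^{1/p}$ on $\big(\sum_{n>N}\big(\|P_n(x_{n_1})\|_n+\dots+\|P_n(x_{n_k})\|_n\big)^p\big)^{1/p}$, which dominates the tail of $\|\sum_i x_{n_i}\|^p$ by the triangle inequality in each $X_n$. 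Combined with the head estimate $\sum_{n\le N}\|\sum_i P_n(x_{n_i})\|_n^p\lesssim C^p k\sum_{n\le N}a_n^p$, and using $C\ge 1$, the two pieces add up to $(C+\varepsilon)^p k$. That use of the weak $p$-Banach-Saks property of $\ell_p$ on the profile sequence is the missing ingredient in your proposal.
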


Before proving Proposition \ref{pBSstablelpsum}, let us isolate a remark for future reference.

\begin{remark}\label{pBSfornonnormalizedseq}
If $X$ has the weak $p$-Banach-Saks property with constant $C>0$, and $(x_n)_n$ is a weakly null sequence such that $\lim_n\|x_n\|=a$, then, for all $\varepsilon>0$, there exists $\M\subset \N$ such that
\[\Big\|\sum_{i=1}^k x_{n_i}\Big\|\leq C (a+\varepsilon) k^{1/p},\] for all $k\in\N$ and all $k\leq n_1<\ldots<n_k\in \M$. 
\end{remark}

\begin{proof}[Proof of Proposition \ref{pBSstablelpsum}]
Let $(x_j)_j$ be a weakly null sequence in 	the unit ball of $(\oplus_nX_n)_{\ell_p}$.  For each $i\in\N$, let $P_i:(\oplus_nX_n)_{\ell_p}\to X_i$ be the natural projection, and let $(e_i)_i$ be the standard basis of $\ell_p$. For each $j\in\N$, let $z_j=\sum_i\|P_i(x_j)\|_ie_i\in  B_{\ell_p}$. As $\ell_p$ is reflexive, by taking a subsequence if necessary,  assume that $z\coloneqq \wlim_j\ z_j$ exists. Say $z=\sum_ia_ie_i$, so $z\in B_{\ell_p}$. 

Fix $k\in \N$ and $\varepsilon_0>0$.  Let $\delta\in (0,1)$  and  pick $m\in\N$ such that $\|\sum_{i>m}ka_ie_i\|_{\ell_p}\leq \delta$. Notice that $\lim_j\|P_i(x_j)\|_i=a_i$, for all $i\in\N$. Hence, passing to a subsequence, we can assume that 
\[\|z_j-z\|_{\ell_p}\leq \Big(1-\sum_{i\leq m} a_i^p+\delta\Big)^{1/p}+\delta,\]
 for all $j\in\N$.  Therefore, as  $\ell_p$ has the weak $p$-Banach-Saks property with constant $1+\varepsilon$, for all $\varepsilon>0$, by going to a further subsequence, assume that 
\[\|z_{n_1}+\ldots+z_{n_k}-kz\|_{\ell_p}\leq \Big(\Big(1-\sum_{i\leq m} a_i^p+\delta\Big)^{1/p}+2\delta\Big)k^{1/p},\]
 for all $n_1<\ldots<n_k\in\N$ (see Remark \ref{pBSfornonnormalizedseq}).  Hence, by our choice of $m$, 
\begin{align*}
\Big\|\sum_{i>m}\Big(\|P_i(x_{n_1})\|_i+\ldots+\|P_i(x_{n_k})\|_i\Big)e_i\Big\|_{\ell_p}\leq \Big(\Big(1-\sum_{i\leq m} a_i^p+\delta\Big)^{1/p}+2\delta\Big)k^{1/p}+\delta,
\end{align*}
for all $n_1<\ldots<n_k\in\N$. 

Let $\gamma>0$. As $\lim_j\|P_i(x_j)\|= a_i$, for all $i\in\N$, and as each $X_n$ has the weak $p$-Banach-Saks property with constant $C+\varepsilon$, for all $\varepsilon>0$,  pick a sequence $\M\subset \N$ such that
\[\|P_i(x_{n_1})+\ldots+P_i(x_{n_k})\|_i\leq C(a_i+\gamma)k^{1/p},\]
 for all $n_1<\ldots<n_k\in\M$, and all $i\in \{1,\ldots,m\}$ (see Remark \ref{pBSfornonnormalizedseq}).

As $C\geq 1$, we conclude that 
\begin{align*}
\|x_{n_1}+&\ldots+x_{n_k}\|^p\\
&= \sum_{i\leq m} \|P_i(x_{n_1})+\ldots+P_i(x_{n_k})\|_i^p+ \sum_{i>m} \|P_i(x_{n_1})+\ldots+P_i(x_{n_k})\|_i^p\\
&\leq C^pk\sum_{i\leq m}(a_i+\gamma)^p+ \sum_{i>m} \Big(\|P_i(x_{n_1})\|_i+\ldots+\|P_i(x_{n_k})\|_i\Big)^p\\
&\leq C^pk\sum_{i\leq m}(a_i+\gamma)^p+\Big(\Big(1-\sum_{i\leq m} a_i^p+\delta\Big)^{1/p}+2\delta\Big)k^{1/p}+\delta\Big)^p,
\end{align*}
for all $n_1<\ldots<n_k\in\N$. The proof finishes by choosing $\delta$ and $\gamma$ small enough.
\end{proof}

Before proving Theorem \ref{pBSnottpBS}, we need to introduce some terminology. This terminology is also  used in the proof of Theorem \ref{ThmCompBanachSaksUniversal}.

Denote by $\text{Tr}$ the set of all trees on $\N$, i.e., $T\in \text{Tr}$ if and only if (i) $T\subset \{\emptyset\}\cup [\N]^{<\omega}$, (ii) $\emptyset\in T$,  and (iii) $(n_1,\ldots,n_k)\in T$ implies $(n_1,\ldots,n_j)\in T$, for all $j\in\{1,\ldots,k\}$. Define a partial order $\preceq$ on $T$ by setting $(n_1,\ldots,n_j)\preceq (m_1,\ldots,m_k)$ if $j\leq k$ and $n_i=m_i$, for all $i\in\{1,\ldots,j\}$, and setting $\emptyset\preceq \bar{n}$, for all $\bar{n}\in T$. A  tree $T$ is called \emph{well-founded} if $T$ contains no strictly increasing sequence, and \emph{ill-founded} otherwise. Let $\WF$ and $\IF$ denote the set of all well-founded and ill-founded trees on $\N$, respectively. A subset  $I\subset T$ is called a \emph{segment} if it is linearly ordered with respect to $\preceq$. We say that $I_1,I_2\subset T$ are \emph{incomparable} if neither $\bar{n}\preceq\bar{m}$ nor $\bar{m}\preceq\bar{n}$, for all $\bar{n}\in I_1$, and all $\bar{m}\in I_2$. We refer to \cite{DodosBook2010}, Section 1.2, for more on trees.

Let $\cE=(e_n)_n$ be a basic sequence in a Banach space $E$. If $T\in\text{Tr}$, $x=(x(\bar{n}))_{\bar{n}\in T}\in c_{00}(T)$, and $I$ is a  segment of $T$,  write $x_{|I}=\sum_{\bar n\in I}x(\bar{n}) e_{\max (\bar n)}$, so $x_{|I}\in E$.  For each $p\in (1,\infty)$, $T\in\text{Tr}$ and $x\in c_{00}(T)$,  define
\begin{align*}
\left\|x\right\|_{p,\cE,T}=\sup\Big\{\Big(\sum_{i=1}^{n}\big\|x_{|I_i}\big\|^p_{E}\Big)^{1/p}\mid 
\ I_1, \ldots,\ I_n &\text{ incomparable segments of 
}T\Big\}.
\end{align*}
Denote  the completion of $c_{00}(T)$ under the norm $\|. \|_{p,\cE,T}$ by $X_{p,\cE,T}$. By abuse of notation, we write $X_{p,E,T}$ if the basis of $E$ is clearly specified.

\begin{proof}[Proof of Theorem \ref{pBSnottpBS}]
Fix $p\in (1,\infty)$ and let $\cE=(e_n)_n$ denote the standard basis of $\ell_1$. Given a tree $T\in \Tr$, let $X_{p,\ell_1,T}$ be the space defined above.  An easy transfinite induction on the order of $T$ (see \cite{DodosBook2010}, Section 1.2, for the definition of the\emph{ order} of a tree $T$) and Proposition \ref{pBSstablelpsum} give us that, for all well-founded trees $T\in \text{Tr}$, the space $X_{p,\ell_1,T}$ has the weak $p$-Banach-Saks property with constant $1+\varepsilon$, for all $\varepsilon>0$ (see \cite{Braga2014Czech}, Theorem 14, for a similar transfinite induction). As the $\ell_p$-sum of reflexive spaces is also reflexive, transfinite induction also gives us that $X_{p,\ell_1,T}$ is reflexive, for all well-founded $T\in \text{Tr}$. In particular,  $X_{p,\ell_1,T}$ does not contain $\ell_1$ for $T\in \WF$, and it follows that   $X_{p,\ell_1,T}$ has the alternating $p$-Banach-Saks property for all well-founded $T\in \text{Tr}$.
 
 For all $k\in\N$, let $(x_{\bar{n}})_{{\bar{n}}\in [\N]^{\leq k}}$ be the unitary weakly null tree in $ X_{p,\ell_1,[\N]^{\leq k}}$, i.e.,  $x_{\bar{n}}(\bar{m})=1$ if $\bar{n}=\bar{m}$, and $x_{\bar{n}}(\bar{m})=0$ if $\bar{n}\neq\bar{m}$. Then, for all $\bar{n}\in [\N]^{k}$, we have that
\[\Big\|\sum_{j=1}^k x_{n_1,\ldots,n_j}\Big\|=k.\] Therefore, as $(x_{\bar{n}})_{{\bar{n}}\in [\N]^{\leq k}}$ is  a weakly null tree, this gives us that, for all $q\in (1,\infty)$, the tree $q$-Banach-Saks  constant of $X_{p,\ell_1,[\N]^{\leq k}}$ is at least $k^{1-1/q}$.

Let $S$ be the Schreier tree, i.e., $ S=\{(n_1,\ldots,n_k)\in [\N]^{<\omega}\mid k\leq n_1<\ldots<n_k\}$. Then, as $S$ contains copies of $[\N]^{\leq k}$, for all $k\in\N$, it follows that  $X_{p,\ell_1,S}$ contains an isometric copy of $X_{p,\ell_1,[\N]^{\leq k}}$, for all $k\in\N$. Hence, as $\lim_k k^{1-1/q}=\infty$, for all $q\in (1,\infty)$, $X_{p,\ell_1,S}$ does not have the tree $q$-Banach-Saks property, for all $q\in (1,\infty)$. As $S$ is well-founded, this finishes the proof.
\end{proof}

The rest of this section is dedicated to prove some complexity results. Let $C[0,1]$ be the space of continuous real-valued functions on $[0,1]$ endowed with the supremum norm. Let $\SB=\{X\in C[0,1]\mid X\text{ is a closed linear subspace}\}$, and endow $\SB$ with the Effros-Borel structure (see \cite{DodosBook2010}, Chapter 2). This makes $\SB$ into  a standard Borel space and, as $C[0,1]$ contains isometric copies of every separable Banach space, $\SB$ can be seen as a coding set for the class of all separable Banach spaces. Therefore, we can talk about  Borel,  (complete) analytic and (complete) coanalytic classes of separable Banach spaces (see \cite{KechrisBook1995} for definitions).

\begin{thm}\label{pBScompcoana}
For each $p\in (1,\infty)$, the sets \[\mathrm{alt}\text{-}p\text{-}\mathrm{BS}\coloneqq \{X\in\mathrm{SB}\mid X\text{  has the alternating }p\text{-Banach-Saks property}\} \] and
\[\mathrm{Refl}+p\text{-}\mathrm{BS}\coloneqq \{X\in\SB\mid X\text{ is reflexive and has the weak }p\text{-Banach-Saks property}\}\]
are complete coanalytic. In particular, $\mathrm{alt}\text{-}p\text{-}\mathrm{BS}$ and 
\[p\text{-}\mathrm{AUSable}\coloneqq\{X\in \mathrm{SB}\mid X\text{ is }p\text{-AUSable}\}\]
have different complexities. 
\end{thm}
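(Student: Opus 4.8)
The plan is to establish coanalyticity of both sets, then $\Pi^1_1$-hardness through a Borel reduction from the set $\WF$ of well-founded trees built out of the spaces $X_{p,\ell_1,T}$ from the proof of Theorem~\ref{pBSnottpBS}, and finally to read off the complexity difference from the Borelness of $p$-asymptotic uniform smoothability. For the coanalyticity of $\mathrm{alt}\text{-}p\text{-}\mathrm{BS}$, I would fix Borel selectors $d_n\colon\SB\to C[0,1]$ with $(d_n(X))_n$ dense in $X$ (Kuratowski--Ryll-Nardzewski) and observe that $X\in\mathrm{alt}\text{-}p\text{-}\mathrm{BS}$ iff there is $C\in\Q^{+}$ so that for every sequence $(x_m)_m$ from $\{d_n(X):n\in\N\}\cap B_X$ and every $k\in\N$ there is $\bar n\in[\N]^k$ with $\|\sum_{j\le k}(-1)^jx_{n_j}\|\le Ck^{1/p}$; since the alternating $p$-Banach-Saks property quantifies over all bounded sequences, with no weak-nullity hypothesis, the clause after ``$\exists C$'' has the form $\forall(\text{real})\,[\text{Borel}]$, hence is $\Pi^1_1$, and a countable union of $\Pi^1_1$ sets is $\Pi^1_1$.

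For $\mathrm{Refl}+p\text{-}\mathrm{BS}$ I would exploit the elementary fact that a reflexive space contains no isomorphic copy of $\ell_1$: by the characterization recalled in Section~\ref{SectionPrelim} (alternating $p$-Banach-Saks $\Leftrightarrow$ not containing $\ell_1$ together with weak $p$-Banach-Saks), a reflexive space has the weak $p$-Banach-Saks property if and only if it has the alternating one. Hence $\mathrm{Refl}+p\text{-}\mathrm{BS}=\mathrm{REFL}\cap\mathrm{alt}\text{-}p\text{-}\mathrm{BS}$, where $\mathrm{REFL}\subseteq\SB$ is the set of reflexive subspaces; since $\mathrm{REFL}$ is coanalytic (Bossard; see also \cite{DodosBook2010}), this presents $\mathrm{Refl}+p\text{-}\mathrm{BS}$ as an intersection of two coanalytic sets.

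For hardness, recall from the proof of Theorem~\ref{pBSnottpBS} that each $T\in\Tr$ gives a space $X_{p,\ell_1,T}$ which, when $T$ is well-founded, is reflexive and has the weak (hence alternating) $p$-Banach-Saks property. The missing ingredient is the ill-founded case: if $(\beta_i)_i$ is an infinite branch of $T$, then $\{\beta_1,\beta_2,\dots\}$ is a single segment of $T$, and the definition of $\|\cdot\|_{p,\ell_1,T}$ shows at once that on vectors supported on this branch the norm collapses to the $\ell_1$-norm (one segment dominates, since $\sum_j s_j^p\le(\sum_j s_j)^p$ for $p>1$ and $s_j\ge 0$); thus $\overline{\mathrm{span}}\{e_{\beta_i}:i\in\N\}$ is an isometric copy of $\ell_1$ in $X_{p,\ell_1,T}$, so $X_{p,\ell_1,T}$ is then neither reflexive nor alternating $p$-Banach-Saks. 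Realizing all $X_{p,\ell_1,T}$ in a Borel fashion as subspaces of a single separable space --- e.g.\ $X_{p,\ell_1,\{\emptyset\}\cup[\N]^{<\omega}}$, using that $\|\cdot\|_{p,\ell_1,T}$ is the restriction of the full-tree norm to vectors supported on $T$ --- and composing with an isometric embedding into $C[0,1]$ yields a Borel map $T\mapsto\widehat X_T\in\SB$ with, for all $T\in\Tr$, the equivalences $T\in\WF\iff\widehat X_T\in\mathrm{alt}\text{-}p\text{-}\mathrm{BS}\iff\widehat X_T\in\mathrm{Refl}+p\text{-}\mathrm{BS}$. Since $\WF$ is complete coanalytic (\cite{KechrisBook1995}), both sets are coanalytic-hard, hence complete coanalytic.

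For the ``in particular'' clause, $p\text{-}\mathrm{AUSable}$ is a Borel subset of $\SB$: $p$-asymptotic uniform smoothability of a separable space is equivalent to a polynomial upper bound (of exponent determined by $p$) on the Szlenk $\varepsilon$-indices $\mathrm{Sz}_\varepsilon(\cdot)$, and each $\{X:\mathrm{Sz}_\varepsilon(X)\le K\}$ is Borel, so $p\text{-}\mathrm{AUSable}$ is Borel, in particular analytic. A complete coanalytic set is not analytic, so $\mathrm{alt}\text{-}p\text{-}\mathrm{BS}$ and $p\text{-}\mathrm{AUSable}$ do not have the same complexity (and by Theorem~\ref{pBSnottpBS} the inclusion $p\text{-}\mathrm{AUSable}\subseteq\mathrm{alt}\text{-}p\text{-}\mathrm{BS}$ is moreover strict). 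The step I expect to be most delicate is not conceptual but bookkeeping: verifying Effros-Borel measurability of $T\mapsto X_{p,\ell_1,T}$ (equivalently of $T\mapsto\overline{\mathrm{span}}\{e_{\bar n}:\bar n\in T\}$ inside a fixed separable space) and quoting the precise form of the Szlenk-index characterization of $p$-AUS renormability used at the end.
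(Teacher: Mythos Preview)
Your proof is correct and follows the same overall strategy as the paper: coanalyticity by syntactic inspection, $\Pi^1_1$-hardness by a Borel reduction from $\WF$ via the tree-spaces of Theorem~\ref{pBSnottpBS}, and the final clause by citing that $p$-AUSable has low complexity. There are, however, a few differences worth flagging. For the coanalyticity of $\mathrm{Refl}+p\text{-}\mathrm{BS}$ you use the clean identity $\mathrm{Refl}+p\text{-}\mathrm{BS}=\mathrm{REFL}\cap\mathrm{alt}\text{-}p\text{-}\mathrm{BS}$ (via the equivalence in Section~\ref{SectionPrelim}), whereas the paper instead invokes Proposition~\ref{pBSwithoutweaklynull} to get a reflexive-specific coanalytic reformulation of weak $p$-Banach-Saks; your route is shorter and avoids that auxiliary proposition. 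For the reduction, you reuse $X_{p,\ell_1,T}$ exactly as in Theorem~\ref{pBSnottpBS}, while the paper switches to $X_{p,\ell_q,T}$ for some $q\in(1,p)$; both work, since in the ill-founded case your space contains $\ell_1$ (killing both reflexivity and alternating $p$-BS) while the paper's contains $\ell_q$ (killing weak $p$-BS directly). One small remark on your branch argument: the reason the norm on a branch collapses to $\ell_1$ is simply that any two segments of a single branch are comparable, so the supremum is realized by the branch taken as one segment; the inequality $\sum s_j^p\le(\sum s_j)^p$ is not actually needed. Finally, for the last clause the paper only quotes that $p$-AUSable is analytic (\cite{Braga2017Studia}), which already suffices; your stronger Borel claim via the Szlenk index is correct but requires the extra citation you anticipate.
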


Before proving Theorem \ref{pBScompcoana}, we give an equivalent definition for the weak $p$-Banach-Saks property in the category of reflexive Banach spaces.

\begin{prop}\label{pBSwithoutweaklynull}
Let $p\in (1,\infty)$. A reflexive Banach space $X$ has the weak $p$-Banach-Saks property if and only if there exists $C>0$ such that, for every sequence $(x_n)_n$ in $B_X$ and every $k\in\N$, there exists $n_1<\ldots<n_{2k}\in\N$ such that
\[\Big\|\sum_{i=1}^k{x_{n_i}}-\sum_{i=k+1}^{2k}x_{n_i}\Big\|\leq C k^{1/p}.\]
\end{prop}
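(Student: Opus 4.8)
\textbf{Proof proposal for Proposition \ref{pBSwithoutweaklynull}.}

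The plan is to prove both implications. The forward direction is the genuinely substantive one. Suppose $X$ is reflexive with the weak $p$-Banach-Saks property with constant $C$. Given a bounded sequence $(x_n)_n$ in $B_X$, reflexivity lets us pass to a weakly convergent subsequence, say $x_n \to x$ weakly along some infinite $\M_0 \subset \N$; then $(x_n - x)_{n \in \M_0}$ is weakly null. However, $x_n - x$ need not lie in $B_X$, only in $2B_X$, so I would either allow a constant $2C$ (harmless, since the statement only asks for existence of \emph{some} $C$) or invoke Remark \ref{pBSfornonnormalizedseq} with $a = \limsup_n \|x_n - x\| \le 2$. Extract a further infinite subset $\M \subset \M_0$ so that $\|\sum_{i=1}^k (x_{m_i} - x)\| \le 2C k^{1/p}$ for all $m_1 < \dots < m_k$ in $\M$. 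Now the point is that the alternating sum kills the $x$ terms exactly: for $n_1 < \dots < n_{2k}$ in $\M$,
\[
\Big\|\sum_{i=1}^k x_{n_i} - \sum_{i=k+1}^{2k} x_{n_i}\Big\|
= \Big\|\sum_{i=1}^k (x_{n_i} - x) - \sum_{i=k+1}^{2k}(x_{n_i} - x)\Big\|
\le 4C k^{1/p},
\]
using the triangle inequality on the two blocks. This gives the desired estimate with constant $4C$ (or one can be slightly more careful and group differently to get $2C$).

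For the converse, assume the block-alternating estimate holds with some constant $C$, and let $(x_n)_n$ be weakly null in $B_X$. Fix $k$; the hypothesis applied to the tail $(x_n)_{n \ge N}$ (for any $N$) produces indices $n_1 < \dots < n_{2k}$ with $\|\sum_{i=1}^k x_{n_i} - \sum_{i=k+1}^{2k} x_{n_i}\| \le C k^{1/p}$. The idea is then to exploit weak nullity to make the ``subtracted'' block $\sum_{i=k+1}^{2k} x_{n_i}$ controllably small in a useful sense — but a cleaner route is the following: since the estimate holds for \emph{every} sequence in $B_X$, apply it to interlaced copies. Given a weakly null $(x_n)$, form a new sequence that alternates $x_n$ with a ``dummy'' weakly null sequence whose partial sums are small; more precisely, use that weakly null sequences have, by Mazur's lemma / a gliding hump, subsequences that are basic and hence for which one can compare $\|\sum_{i=1}^k x_{m_i}\|$ with $\|\sum_{i=1}^k x_{m_i} - \sum_{i=k+1}^{2k} x_{m_i}\|$ up to a factor. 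The slickest argument: pass to a basic subsequence $(x_{m_i})$, and observe that by a standard perturbation one may interleave two further subsequences so that the block $\sum_{k+1}^{2k}$ contributes negligibly to the norm after a suitable sign-and-index rearrangement, reducing $\|\sum_{i=1}^k x_{m_i}\| \le C' k^{1/p}$. One should double check whether reflexivity is actually needed for this direction — I expect it is not, and only the forward direction truly uses it (to get weak cluster points of arbitrary bounded sequences).

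The main obstacle is the converse direction: turning a statement about \emph{all} bounded sequences and \emph{alternating} blocks into one about weakly null sequences and \emph{ordinary} Cesàro-type sums. The forward direction is essentially just ``subtract the weak limit and observe the telescoping cancellation,'' which is routine. For the converse I expect the right move is to apply the hypothesis to cleverly interlaced sequences built from the given weakly null one (and possibly its translates), so that the subtracted half-sum can be absorbed into the error; if that turns out to be awkward, an alternative is to prove the contrapositive — if $X$ fails the weak $p$-Banach-Saks property, produce a weakly null sequence witnessing large ordinary sums and then, by a sign-flipping / reindexing trick, convert it into a bounded sequence witnessing large alternating block sums. I would present whichever of these is shorter, and I would make sure the constants are tracked only up to universal multiplicative factors, since the statement quantifies existentially over $C$.
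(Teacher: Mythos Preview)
Your forward direction is correct and matches the paper's approach exactly: pass to a weakly convergent subsequence (using reflexivity), set $z_n = x_n - x$, and use that the $x$ terms cancel in the alternating block sum; the paper also arrives at constant $4C$ via the triangle inequality on the two half-sums.

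The gap is in your converse direction. You correctly identify the first move --- pass to a basic subsequence of the given weakly null sequence --- but then wander into interleaving tricks and making the second block ``contribute negligibly,'' which is not what happens and is not needed. The paper's argument is a single line once $(x_n)_n$ is basic with basic constant (say) $2$: the canonical projection onto the first $k$ coordinates of $(x_{n_1},\ldots,x_{n_{2k}})$ has norm at most $2$, so
\[
\Big\|\sum_{i=1}^k x_{n_i}\Big\| \;\le\; 2\,\Big\|\sum_{i=1}^k x_{n_i} - \sum_{i=k+1}^{2k} x_{n_i}\Big\| \;\le\; 2C\, k^{1/p}.
\]
The second block is not small; it is simply killed by the basis projection. (Ramsey theory is used first to upgrade ``there exist $n_1<\cdots<n_{2k}$'' to ``for all $n_1<\cdots<n_{2k}$ in some infinite $\M$,'' after which the projection inequality applies uniformly.) Your contrapositive alternative would also work, but it reduces to the same inequality read in reverse, so it offers no simplification. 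And your instinct is right: reflexivity is used only in the forward direction.
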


\begin{proof}
Let us first show the backwards direction. Let $C>0$ be as in the statement of the proposition. Let $(x_n)_n$ be a normalized weakly null sequence. Without loss of generality,  assume that $(x_n)_n$ is a basic sequence with basic constant $2$. The hypothesis and  Ramsey theory give  that, for all $k\in\N$, there exists $\M\subset \N$, such that
\[\Big\|\sum_{i=1}^k{x_{n_i}}-\sum_{i=k+1}^{2k}x_{n_i}\Big\|\leq C k^{1/p},\]
for all $n_1<\ldots<n_{2k}\in\M$. So, $\|\sum_{i=1}^kx_{n_i}\|\leq 2Ck^{1/p}$, for all $n_1<\ldots<n_k\in \M$. 

Assume $X$ has the weak $p$-Banach-Saks property with constant $C>0$. Let $(x_n)_n$ be a sequence in $B_X$. As $X$ is reflexive, by taking a subsequence,  assume that $x=\wlim_n\ x_n$ exists. Let $z_n=x_n-x$, for all $n\in\N$. So, $(z_n)_n$ is weakly null and $\|z_n\|\leq 2$, for all $n\in\M$. Fix $k\in\N$. By going to a subsequence,  assume that 
\[\Big\|\sum_{i=1}^kz_{n_i}-\sum_{i=k+1}^{2k}z_{n_i}\Big\|\leq \Big\|\sum_{i=1}^kz_{n_i}\Big\|+\Big\|\sum_{i=k+1}^{2k}z_{n_i}\Big\|\leq 4Ck^{1/p},\]
 for all $n_1<\ldots<n_{2k}\in\N$. Hence, 
\[\Big\|\sum_{i=1}^k{x_{n_i}}-\sum_{i=k+1}^{2k}x_{n_i}\Big\|=\Big\|\sum_{i=1}^kz_{n_i}-\sum_{i=k+1}^{2k}z_{n_i}\Big\|\leq 4Ck^{1/p},\]
for all $n_1<\ldots<n_{2k}\in\N$.
\end{proof}

\begin{proof}[Proof of Theorem \ref{pBScompcoana}]
Fix $p\in (1,\infty)$. The definition of the alternating $p$-Banach-Saks property given in Section \ref{SectionIntro} is clearly a coanalytic definition, so $\text{alt-}p\text{-BS}$ is coanalytic. Also, it is well known that  $\text{Refl}=\{X\in\text{SB}\mid X\text{ is reflexive}\}$ is coanalytic (see \cite{DodosBook2010}, Theorem 2.5). Hence, as the condition in the characterization of  the weak $p$-Banach-Saks property given in Proposition \ref{pBSwithoutweaklynull} is clearly coanalytic, it follows that $\text{Refl}+p\text{-BS}$ is coanalytic.

Let $\cE=(e_n)_n$ be the standard basis of $\ell_q$ for some $q\in (1,p)$. For each tree $T\in \text{Tr}$, let $X_{p,\ell_q,T} $ be the space defined in the proof of Proposition \ref{pBSnottpBS}.  Fixing an isometric copy of $X_{p,\ell_q,[\N]^{< \omega}}$ in $C[0,1]$ and identifying each $X_{p,\ell_q,T}$ with a subspace of $X_{p,\ell_q,[\N]^{<\omega}}$ in the natural way, it is easy to see that the assignment  $T\in\text{Tr}\mapsto X_{p,\ell_q,T}\in\SB$ is a Borel function. If $T$ is ill-founded, $X_{p,\ell_q,T}$ contains an isometric copy of $\ell_q$, so it has neither  the alternating $p$-Banach-Saks property nor the weak $p$-Banach-Saks property. Therefore, $T$ is well-founded if and only if $X_{p,\ell_q,T}$ has the alternating $p$-Banach-Saks property (resp. weak $p$-Banach-Saks property). Hence, the proof of Theorem  \ref{pBSnottpBS} shows  that  the well-founded trees Borel reduce to $\text{alt-}p\text{-BS}$ (resp. $\text{Refl}+p\text{BS}$), i.e., there exists a Borel map $\varphi:\Tr\to \SB$ so that $T$ is well-founded if and only if $\varphi(T)\in\text{alt-}p\text{-BS}$ (resp. $\varphi(T)\in\text{Refl}+p\text{-BS}$). This shows that $\text{alt-}p\text{-BS}$ and  $\text{Refl}+p\text{-BS}$ are complete coanalytic. 

The last statement in the theorem follows from the fact that $p$-$\mathrm{AUSable}$ is analytic (see \cite{Braga2017Studia}, page 82).
\end{proof}

\subsection{Coarsely universal Banach spaces}
N. Kalton proved that if $c_0$ coarsely embeds into a Banach space $X$, then $X^{(n)}$ is non-separable for some $n\in\N$ (\cite{Kalton2007}, Theorem 3.6). In particular $c_0$ does not coarsely embed into any reflexive Banach space. In this subsection, we show that if a separable Banach space $X$ contains coarsely every separable reflexive Banach space, then $c_0$ coarsely embeds into $X$. In particular, by N. Kalton's result, $X^{(n)}$ is non-separable for some $n\in\N$. By a famous result of I. Aharoni (\cite{Aharoni1974}, Theorem in page 288), it also follows that $X$ is coarsely universal. 
	
\begin{proof}[Proof of Theorem \ref{ThmCompBanachSaksUniversal}]
We only need to show that $C[0,1]$ coarsely embeds into $X$. Let $\A=\{Z\in \SB\mid Z\text{ coarsely embeds into } X\}$. It is easy to see that $\A$ is analytic.  Indeed, we have that
\begin{align*}
Z\in \A \ \Leftrightarrow \ &\exists (z_n)_n\in Z^\N,\ (x_n)_n\in X^\N, \\
&\Big((z_n)_n\ \text{is a net in Z}\Big)\wedge\Big( z_n\mapsto x_n \ \text{defines a coarse embedding}\Big),
\end{align*}
and it is clear that the properties ``$(z_n)_n$ is a net in $Z" $ and ``$z_n\mapsto x_n$  defines a coarse embedding$"$ are Borel.

Let $\cE=(e_n)_{n\in\N}$ be a basis for $C[0,1]$. For each $T\in \Tr$, let $X_{2,C[0,1],T}$ be the metric space define above.  An easy transfinite induction on the order of $T$ gives us that, for all well-founded trees $T\in \text{Tr}$, the space $X_{2,C[0,1],T}$ has the Banach-Saks property  (see \cite{Braga2014Czech}, Theorem 14). Also, if $T\in \IF$, it is clear that $C[0,1]$ linearly isometrically embeds into $X_{2,C[0,1],T}$. 

Let $\varphi:\Tr\to \SB$ be a Borel function so that $\varphi(T)\equiv X_T$, for all $T\in \Tr$. Then, by the discussion above, we have that
\begin{enumerate}[(i)]
\item $\varphi(T)$ has the Banach-Saks property, for all $T\in \WF$, and
\item $\varphi(T)$ contains an isometric copy of $C[0,1]$, for all $T\in \IF$.
\end{enumerate}

Suppose that $C[0,1]$ does not coarsely embed into $X$. Then $\varphi^{-1}(\A)=\WF$. Since $\WF$ is not analytic, this gives us a contradiction. Hence, $C[0,1]$ coarsely embeds into $X$.

The last statement follows from Theorem 3.6 of \cite{Kalton2007}. Indeed, since $X$ is coarsely universal, $c_0$ coarsely emebds into $X$. Therefore, by Theorem 3.6 of \cite{Kalton2007}, it follows that $X^{(n)}$ is not separable, for some $n\in\N$.  
\end{proof}

\begin{cor}
Let $X$ be separable Banach space and assume that $Y$ coarsely embeds into $ X$, for all reflexive separable Banach spaces $Y$. Then $X$ is not reflexive.\qed
\end{cor}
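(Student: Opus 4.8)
The plan is to deduce this corollary directly from Theorem \ref{ThmCompBanachSaksUniversal}. First I would observe that if every reflexive separable Banach space $Y$ coarsely embeds into $X$, then in particular every Banach space with the Banach-Saks property which is also separable coarsely embeds into $X$; indeed, every Banach space with the Banach-Saks property is reflexive (a classical fact, since the Banach-Saks property implies reflexivity), so such a space falls under the hypothesis. More carefully, I should address that Theorem \ref{ThmCompBanachSaksUniversal} is phrased for \emph{every} Banach space with the Banach-Saks property, not merely the separable ones; but a Banach space coarsely embeds into a separable space only through its separable subspaces, and in fact a Banach space coarsely embeds into $X$ iff each of its separable subspaces does, while separable subspaces of a Banach-Saks space are again Banach-Saks and reflexive. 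So the hypothesis ``every reflexive separable $Y$ coarsely embeds into $X$'' does supply the hypothesis of Theorem \ref{ThmCompBanachSaksUniversal}.

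Granting that, Theorem \ref{ThmCompBanachSaksUniversal} yields that every separable Banach space coarsely embeds into $X$, and moreover that $X^{(n)}$ is non-separable for some $n \in \N$. The second, quicker route to the conclusion is simply: a reflexive Banach space has all of its iterated duals equal (up to the canonical identification) to $X$ or $X^*$, hence if $X$ were reflexive and separable, every $X^{(n)}$ would be separable, contradicting the last assertion of Theorem \ref{ThmCompBanachSaksUniversal}. Therefore $X$ is not reflexive.

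Alternatively, and perhaps even more transparently, I could note that $c_0$ has the property that no reflexive space coarsely embeds into it being irrelevant here; what matters is that the conclusion ``every separable Banach space coarsely embeds into $X$'' forces $c_0$ to coarsely embed into $X$, and by N.~Kalton's theorem (\cite{Kalton2007}, Theorem 3.6) a space into which $c_0$ coarsely embeds cannot be reflexive (since reflexive spaces have all iterated duals separable when the space is separable, or more directly since $c_0$ does not coarsely embed into any reflexive space). Either phrasing works. The main (very minor) obstacle is the bookkeeping about separability: making sure the passage from ``reflexive separable spaces coarsely embed'' to the hypothesis of Theorem \ref{ThmCompBanachSaksUniversal} is clean, using that coarse embeddability into a separable space is witnessed separably and that separable subspaces of Banach-Saks spaces are Banach-Saks. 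Since this is routine, I would write the proof in a couple of lines:

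\begin{proof}
Every Banach space with the Banach-Saks property is reflexive, and a separable subspace of such a space again has the Banach-Saks property and hence is reflexive. Since a Banach space coarsely embeds into the separable space $X$ if and only if each of its separable subspaces does, the hypothesis implies that every Banach space with the Banach-Saks property coarsely embeds into $X$. By Theorem \ref{ThmCompBanachSaksUniversal}, there is $n\in\N$ with $X^{(n)}$ non-separable. If $X$ were reflexive, then $X^{(n)}$ would be isometric to $X$ or $X^*$, both separable, a contradiction. Hence $X$ is not reflexive.
\end{proof}
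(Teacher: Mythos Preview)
Your overall approach---deduce the corollary from Theorem~\ref{ThmCompBanachSaksUniversal} using that Banach--Saks spaces are reflexive, and then note that a separable reflexive space has all iterated duals separable---is exactly what the paper intends; the \qed in the statement signals that no further argument is given.

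However, your handling of the separability bookkeeping contains a genuine slip. The claim ``a Banach space coarsely embeds into the separable space $X$ if and only if each of its separable subspaces does'' is false: take $Y=\ell_2(\Gamma)$ for uncountable $\Gamma$. Every separable subspace of $Y$ is isometric to a subspace of $\ell_2$ and hence coarsely embeds into any infinite-dimensional Banach space, but $Y$ itself contains an uncountable $1$-separated set, so it cannot coarsely embed into any separable space. Thus your route to verifying the literal hypothesis of Theorem~\ref{ThmCompBanachSaksUniversal} does not work.

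The fix is simpler than what you attempted. The \emph{proof} of Theorem~\ref{ThmCompBanachSaksUniversal} only ever uses the separable spaces $X_{2,C[0,1],T}$ for $T\in\WF$, each of which has the Banach--Saks property and is therefore reflexive. So the hypothesis ``every separable reflexive $Y$ coarsely embeds into $X$'' already suffices to run that proof verbatim and obtain $X^{(n)}$ non-separable for some $n$. (Equivalently: read the hypothesis of Theorem~\ref{ThmCompBanachSaksUniversal} as ``every \emph{separable} Banach space with the Banach--Saks property,'' which is what its proof actually requires.) With that adjustment, your concluding step---reflexive separable $X$ would force all $X^{(n)}$ separable, contradiction---is correct.
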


\section{Open Problems}\label{SectionProblems}

Many interesting problems along the lines of these notes remain open. In this section, we discuss a couple of them. Although we proved that if any of the (arbitrarily high)   iterated duals of even order ``behave badly'', then no decent concentration inequality can hold, we do not know what happens if all the  iterated duals of even order ``behave well''. Precisely, the following seems to be an interesting problem.

\begin{problem}\label{Problem1}
Let $p\in (1,\infty)$ and let $X$ be a Banach space so that $X^{(2\ell)}$ is $p$-AUSable for alll $\ell\in\N$. Is there $C>0$ so that for all $k\in\N$, all Lipschitz maps $f:([\N]^k,d_{\mathrm{H}})\to X$ and all $\eps>0$, there exists an infinite subset $\M\subset \N$ so that
\[\|f(\bar{n})-f(\bar{m})\|\leq C\Lip(f)k^{1/p}+\eps,\]
for all $(\bar{n},\bar{m})\in \cI_k(\M)$? 
\end{problem}

Assuming something extra on the quotient $X^{**}/X$ may make the problem more tangible. The following less ambitious problem seems to be a good place to start.

\begin{problem}\label{Problem2}
Let $p\in (1,\infty)$ and let $X$ be a $p$-AUS Banach space so that $X$ is complemented in $X^{**}$ and that  $X^{**}/X$ is reflexive and $p$-AUSable. Is there $C>0$ so that for all $k\in\N$, all Lipschitz maps $f:([\N]^k,d_{\mathrm{H}})\to X$ and all $\eps>0$, there exists an infinite subset $\M\subset \N$ so that
\[\|f(\bar{n})-f(\bar{m})\|\leq C\Lip(f)k^{1/p}+\eps,\]
for all $(\bar{n},\bar{m})\in \cI_k(\M)$? What if $X^{**}/X$ is $q$-AUSable, for some $q>p$?
\end{problem}

On the other hand, for those who believe that Problem \ref{Problem1} and Problem \ref{Problem2} should have a negative answer, a positive answer to the next problem would imply that. A positive answer would also give us that if $X$ coarse Lipschitzly embeds into an AUSable quasi-reflexive space, then $X$ is quasi-reflexive.

\begin{problem}
Let $X$ be a non-quasi-reflexive Banach space. Is there an $\ell\in\N$ and $L>0$ so that for all $k\in\N$, there exists $f:[\N]^{\ell k}\to X$ such that 
\[d_{\Delta}(\bar{n},\bar{m})\leq \|f(\bar{n},\bar{n}')-f(\bar{m},\bar{m}')\|\leq L\cdot d_{\mathrm{H}}\big((\bar{n},\bar{n}'),(\bar{m},\bar{m}')\big),\]
for all $(\bar{n},\bar{n}'),(\bar{m},\bar{m}')\in[\N]^{\ell k}$ with $\bar{n}, \bar{m} \in[\N]^{ k}$ and $ \bar{n}' \bar{m}'\in[\N]^{(\ell-1) k}$?
\end{problem}

\begin{problem}
Let $X$ be a Banach space which coarse Lipschitzly embeds into a quasi-reflexive AUS Banach space. Does $X$ need to be quasi-reflexive?
\end{problem}

Besides when the space $X$ has the Schur property, the only example of Banach spaces $X$ and $Y$ such that $X$ coarse Lipschitzly embeds into $Y$ by a weakly sequentially continuous map is of non-reflexive   non-separable Banach spaces. Either a positive answer of a counterexample to the next problem would be interesting.

\begin{problem}
Let $X$ and $Y$ be separable reflexive Banach spaces and assume that $X$ coarse Lipschitzly embeds into $Y$. Is there a weakly sequentially continuous coarse Lipschitz embedding from $X$ into $Y$?
\end{problem}

At last, we ask whether  a version of Theorem \ref{WCoarseLipEmbTree} holds for the weak $p$-Banach-Saks property. Precisely, we ask the following.

\begin{problem}\label{ProblemBS}
Let $X$ and $Y$ be Banach spaces. Assume that $Y$ has the weak  $p$-Banach-Saks property, for some $p\in (1,\infty)$. If $X$ coarse Lipschitzly embeds into $Y$ by a weakly sequentially continuous map, does it follow that $X$ has the weak $p$-Banach-Saks property? 
\end{problem}

\begin{acknowledgments}
Part of this article was written during my visits to Texas A\&M University, in July of 2016, and  Universit\'{e} Bourgogne Franche-Comt\'{e}, in March 2018. I am particularly thankful to Florent Baudier and  Thomas Schlumprecht for stimulating discussions which led to the results in Section \ref{SectionWSCCoarseLipEmb} and to Gilles Lancien for several suggestions which led to the results in Section \ref{SectionConcentrationInequality}, Section \ref{SectionSeparableDuals} and Section \ref{SectionComplexity}. Besides the mathematical help, I would like to express my appreciation for the warm welcoming I received in both mathematical departments mentioned above. In particular, I would like to extend my gratitude  to  Florence Lancien, Pavlos Motakis, Tony Prochazka, Matias Raja and Andrew Swift. At last, I would like to thank the Fields Institute for allowing me to make use of its facilities.
\end{acknowledgments}

\bibliographystyle{alpha}
\bibliography{bibliography}

\end{document}